\theoremstyle{plain}
\newtheorem{thm}{Theorem}[section]
\newtheorem{theorem}[thm]{Theorem}
\newtheorem*{thm*}{Theorem}
\newtheorem*{cor*}{Corollary}
\newtheorem{prop}[thm]{Proposition}
\newtheorem{proposition}[thm]{Proposition}
\newtheorem{lemma}[thm]{Lemma}
\newtheorem{cor}[thm]{Corollary}
\newtheorem{corollary}[thm]{Corollary}
\newtheorem{claim}{Claim}
\newtheorem*{claim*}{Claim}
\theoremstyle{definition}
\newtheorem{defn}[thm]{Definition}
\newtheorem{ex}[thm]{Example}
\newtheorem{Example}[thm]{Example}
\newtheorem{remark}[thm]{Remark}
\newtheorem{ques}[thm]{Question}
\theoremstyle{remark}
\numberwithin{equation}{thm}
\def\Min{\operatorname{Min}}
\def\Ext{\operatorname{Ext}}
\def\Im{\operatorname{Im}}
\def\Hom{\operatorname{Hom}}
\def\mod{\mathrm{mod}}
\def\Coker{\mathrm{Coker}}
\def\rank{\mathrm{rank}}
\def\m{\mathfrak m}
\def\n{\mathfrak n}
\newcommand{\Ann}{\mathrm{Ann}}
\newcommand{\rme}{\mathrm{e}}
\newcommand{\rmr}{\mathrm{r}}
\newcommand{\rmE}{\mathrm{E}}
\newcommand{\rmK}{\mathrm{K}}
\newcommand{\rmQ}{\mathrm{Q}}
\newcommand{\calF}{\mathcal{F}}
\newcommand{\calX}{\mathcal{X}}
\newcommand{\fka}{\mathfrak{a}}
\newcommand{\fkc}{\mathfrak{c}}
\newcommand{\fkm}{\mathfrak{m}}
\newcommand{\fkp}{\mathfrak{p}}
\newcommand{\fkq}{\mathfrak{q}}
\newcommand{\mapright}[1]{%
\smash{\mathop{%
\hbox to 1cm{\rightarrowfill}}\limits^{#1}}}
\newcommand{\mapleft}[1]{%
\smash{\mathop{%
\hbox to 1cm{\leftarrowfill}}\limits_{#1}}}
\def\depth{\operatorname{depth}}
\def\Spec{\operatorname{Spec}}
\title[The Cohen-Macaulay type of idealizations]{Residually faithful modules and the Cohen-Macaulay type of idealizations}
\author{Shiro Goto}
\address{Department of Mathematics, School of Science and Technology, Meiji University, 1-1-1 Higashi-mita, Tama-ku, Kawasaki 214-8571, Japan}
\email{shirogoto@gmail.com}
\author{Shinya Kumashiro}
\address{Department of Mathematics and Informatics, Graduate School of Science and Technology, Chiba University, Chiba-shi 263, Japan}
\email{polar1412@gmail.com}
\author{Nguyen Thi Hong Loan}
\address{Department of Mathematics, School of Natural Sciences Education, Vinh University, 182 Le Duan, Vinh City, Nghe An Province, Vietnam}
\email{nhloandhv@gmail.com}
\thanks{2010 {\em Mathematics Subject Classification.} 13H10, 13H15}
\thanks{{\em Key words and phrases.} Cohen-Macaulay ring, Gorenstein ring, almost Gorenstein ring, maximal Cohen-Macaulay module, residually faithful module, canonical module, canonical ideal, trace ideal, Ulrich ideal, Ulrich module, syzygy module, maximal embedding dimension}
\thanks{The first author was partially supported by the JSPS Grant-in-Aid for Scientific Research (C) 16K05112. The first and the second authors were partially supported by Bilateral Programs (Joint Research) of JSPS and International Research Supporting Programs of Meiji University. The third author was partially supported by International Research Supporting Programs of Meiji University.}
\begin{document}
\maketitle

\setlength{\baselineskip} {17pt}

%{\footnotesize{\tableofcontents}}

\begin{abstract}
The Cohen-Macaulay type of idealizations of maximal Cohen-Macaulay modules over  Cohen-Macaulay local rings is explored. There are two extremal cases, one of which is closely related to the theory of Ulrich modules \cite{BHU, GOTWY1, GOTWY2, GTT2}, and the other one is closely related to the theory of residually faithful modules and the theory of closed ideals \cite{BV}.

\end{abstract}

\section{Introduction}
The purpose of this paper is to explore the behavior of the Cohen-Macaulay type of idealizations of maximal Cohen-Macaulay modules over Cohen-Macaulay local rings, mainly  in connection with their residual faithfulness.

Let $R$ be a commutative ring and $M$ an $R$-module. We set $A = R \oplus M$ as an additive group and define the multiplication in $A$ by
$$(a,x) {\cdot} (b, y) = (ab, ay + bx) $$
for $(a,x), (b, y) \in A$. Then, $A$ forms a commutative ring, which we denote by $A=R\ltimes M$ and call the idealization of $M$ over $R$ (or, the trivial extension of $R$ by $M$). Notice that $R \ltimes M$ is a Noetherian ring if and only if so is the ring $R$ and the $R$-module $M$ is  finitely generated. If $R$ is a local ring with maximal ideal $\m$, then so is the idealization $A=R \ltimes M$, and the maximal ideal $\n$ of $A$ is given by $\n = \m \times M$.

The notion of the idealization was introduced in the book \cite{N} of Nagata, and we now have diverse applications in several directions (see, e.g., \cite{AW, GMP, GTT}). Let $(R,\m)$ be a Cohen-Macaulay local ring of dimension $d$. We set $$\rmr(R) = \ell_R\left(\Ext_R^d(R/\m,R)\right)$$ and call it the Cohen-Macaulay type of $R$ (here $\ell_R(*)$   denotes the length). Then, as is well-known, $R$ is a Gorenstein ring if and only if $\rmr(R) = 1$, so that the invariant $\rmr(R)$ measures how different the ring $R$ is from being a Gorenstein ring. In the current paper, we are interested in the Cohen-Macaulay type $\rmr(R \ltimes M)$ of $R\ltimes M$, for a maximal Cohen-Macaulay (MCM for short) $R$-module $M$, that is a finitely generated $R$-module $M$ with $\depth_RM=\dim R$. In the researches of this direction, one of the most striking results is, of course, the characterization of canonical modules obtained by I. Reiten \cite{R}. She showed that $R \ltimes M$ is a Gorenstein ring if and only if $R$ is a Cohen-Macaulay local ring and $M$ is the canonical module of $R$, assuming $(R,\m)$ is a Noetherian local ring and $M$ is a non-zero finitely generated $R$-module. Motivated by this result, our study aims at explicit formulae of the Cohen-Macaulay type $\rmr(R \ltimes M)$ of idealizations for diverse MCM $R$-modules $M$.

Let us state some of our main results, explaining how this paper is organized. 
Throughout, let $(R, \m)$ be a Cohen-Macaulay local ring, and $M$ a MCM $R$-module. Then, we have in general $$\rmr_R(M) \le \rmr(R \ltimes M) \le \rmr(R) + \rmr_R(M)$$ (here $\rmr_R(M) = \ell_R\left(\Ext_R^d(R/\m,M)\right)$ denotes the Cohen-Macaulay type of $M$), which we shall confirm in Section 2 (Theorem \ref{pro1}).  As is shown in Example \ref{2.3.1} and Proposition \ref{2.3.2}, the difference $\rmr(R \ltimes M)-\rmr_R(M)$ can be arbitrary among the interval $[0, \rmr(R)]$. We explore two extremal cases; one is the case of  $\rmr(R \ltimes M) = \rmr_R(M)$, and the other one is the case of $\rmr(R \ltimes M) = \rmr(R) + \rmr_R(M)$.

The former case is exactly the case where $M$ is a residually faithful $R$-module and closely related to the preceding research \cite{BV}. To explain the  relationship more precisely, for $R$-modules $M$ and $N$, let $$t=t^M_N: \Hom_R(M,N)\otimes_R M \to N$$ denote the $R$-linear map defined by $t(f\otimes x)=f(x)$ for all $f \in \Hom_R(M,N)$ and $x \in M$. With this notation, we have the following, which we will prove in Section 3. Here, $\mu_R(*)$ denotes the number of elements in a minimal system of generators.

\begin{thm}\label{1.1}
Let $M$ be a MCM $R$-module and suppose that $R$ possesses the canonical module $\rmK_R$.  Then $$\rmr(R\ltimes M) = \rmr_R(M) + \mu_R(\operatorname{Coker} t^M_{\rmK_R}).$$ 
\end{thm}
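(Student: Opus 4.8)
The plan is to compute $\rmr(R\ltimes M)$ through the canonical module of $A := R\ltimes M$. Since $M$ is a MCM $R$-module, $A\cong R\oplus M$ is a MCM $R$-module; hence $A$ is a Cohen--Macaulay local ring with maximal ideal $\n=\m\times M$ and $\dim A=d$, and, being module-finite over $R$ and MCM as an $R$-module while $R$ possesses $\rmK_R$, it admits the canonical module $\rmK_A=\Hom_R(A,\rmK_R)$ with $A$-action $(\xi\cdot f)(\eta)=f(\xi\eta)$. I would begin from the standard identity $\rmr(A)=\mu_A(\rmK_A)$, valid for any Cohen--Macaulay local ring with a canonical module (reduce modulo a maximal regular sequence to the Artinian case, where it is Matlis duality). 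So the theorem reduces to showing $\mu_A(\rmK_A)=\rmr_R(M)+\mu_R(\operatorname{Coker} t^M_{\rmK_R})$, for which I compute $\rmK_A/\n\rmK_A$ directly.

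The key step is to make the $A$-module structure of $\rmK_A$ explicit through the splitting $A=R\oplus M$. Restriction along $R=R\times 0\hookrightarrow A$ and $M=0\times M\hookrightarrow A$ gives an $R$-linear isomorphism $\rmK_A=\Hom_R(A,\rmK_R)\cong\rmK_R\oplus\Hom_R(M,\rmK_R)$, $f\mapsto (f(1,0),\,f|_{0\times M})$, and a short computation with the multiplication of $A$ shows this turns the $A$-action into
$$(a,x)\cdot(c,\beta)=\bigl(ac+\beta(x),\ a\beta\bigr),\qquad (a,x)\in A,\ (c,\beta)\in\rmK_R\oplus\Hom_R(M,\rmK_R).$$
Thus $\rmK_R\times 0$ is an $A$-submodule, and there is an exact sequence $0\to\rmK_R\to\rmK_A\to\Hom_R(M,\rmK_R)\to 0$ of $A$-modules whose outer terms carry their structure through $A\twoheadrightarrow R$. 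Crucially, the ``cross term'' $(0,x)\cdot(c,\beta)=(\beta(x),0)$ is precisely the evaluation map $t^M_{\rmK_R}\colon\Hom_R(M,\rmK_R)\otimes_R M\to\rmK_R$.

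Now I would compute $\n\rmK_A$. With $a$ ranging over $\m$ and $x$ over $M$ in the displayed formula, the second components of $\n\rmK_A$ sweep out $\m\Hom_R(M,\rmK_R)$, the first components sweep out $\m\rmK_R+\Im(t^M_{\rmK_R})$, and the two can be prescribed independently; hence
$$\n\rmK_A=\bigl(\m\rmK_R+\Im(t^M_{\rmK_R})\bigr)\ \oplus\ \m\Hom_R(M,\rmK_R),$$
so that $\rmK_A/\n\rmK_A\cong\bigl(\operatorname{Coker} t^M_{\rmK_R}\otimes_R R/\m\bigr)\oplus\bigl(\Hom_R(M,\rmK_R)\otimes_R R/\m\bigr)$ and therefore $\mu_A(\rmK_A)=\mu_R(\operatorname{Coker} t^M_{\rmK_R})+\mu_R(\Hom_R(M,\rmK_R))$. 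To close the argument I would invoke the standard fact $\mu_R(\Hom_R(M,\rmK_R))=\rmr_R(M)$ for MCM $M$ (again reducing to the Artinian case, where the minimal number of generators of a Matlis dual equals the socle length), obtaining $\rmr(A)=\rmr_R(M)+\mu_R(\operatorname{Coker} t^M_{\rmK_R})$. As a check, $\operatorname{Coker} t^M_{\rmK_R}$ is a homomorphic image of $\rmK_R$, so $\mu_R(\operatorname{Coker} t^M_{\rmK_R})\le\mu_R(\rmK_R)=\rmr(R)$, in agreement with Theorem \ref{pro1}.

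The main obstacle I anticipate is the second step. First, one must confirm that $\Hom_R(R\ltimes M,\rmK_R)$ really is a canonical module of $R\ltimes M$; this rests on $R\ltimes M$ being MCM over $R$ (so $\Ext^i_R(R\ltimes M,\rmK_R)=0$ for $i>0$) together with the behaviour of canonical modules under a finite ring homomorphism. Second, and more delicately, one must carry the $A$-module structure correctly across the decomposition $\rmK_R\oplus\Hom_R(M,\rmK_R)$, since it is only after that identification that $t^M_{\rmK_R}$ appears inside $\n\rmK_A$; getting the action $(a,x)\cdot(c,\beta)=(ac+\beta(x),a\beta)$ exactly right is what makes the remaining, essentially routine, computation of $\n\rmK_A$ and of $\rmK_A/\n\rmK_A$ go through.
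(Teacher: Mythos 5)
Your proposal is correct and follows essentially the same route as the paper: it identifies $\rmK_A=\Hom_R(A,\rmK_R)$ with $\rmK_R\oplus M^\vee$ carrying exactly the twisted $A$-action the paper writes down (there as $(a,m)\circ(f,x)=(af,\,f(m)+ax)$), then computes $\n\rmK_A=\bigl(\m\rmK_R+\Im t^M_{\rmK_R}\bigr)\times\m M^\vee$ and concludes via $\rmr(A)=\mu_A(\rmK_A)$ and $\mu_R(M^\vee)=\rmr_R(M)$. No gaps; the only difference is that the paper cites the literature for $\rmK_A\cong\Hom_R(A,\rmK_R)$ where you justify it directly.
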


\noindent
As a consequence, we get the following, where the equivalence between Conditions (2) and (3) is due to \cite[Proposition 5.2]{BV}.  Remember that a MCM $R$-module $M$ is said to be {\it residually faithful}, if $M/\fkq M$ is a faithful $R/\fkq$-module for some (eventually, for every) parameter ideal $\fkq$ of $R$ (cf. \cite[Definition 5.1]{BV}).

\begin{cor}[cf. {\cite[Proposition 5.2]{BV}}]\label{1.2}
Let $M$ be a MCM  $R$-module and suppose that $R$ possesses the canonical module $\rmK_R$.  Then the following conditions are equivalent.
\begin{enumerate}[{\rm (1)}]
\item[$(1)$] $\rmr(R \ltimes M) = \rmr_R(M)$.
\item[$(2)$] The homomorphism $t_{\rmK_R}^M : \Hom_R(M, \rmK_R) \otimes_RM \to \rmK_R$ is surjective.
\item[$(3)$] $M$ is  a residually faithful $R$-module.
\end{enumerate}
\end{cor}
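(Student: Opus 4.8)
The plan is to deduce everything from Theorem \ref{1.1} together with the standard homological interpretation of residual faithfulness, so the real work is already packaged into the formula $\rmr(R\ltimes M)=\rmr_R(M)+\mu_R(\Coker t^M_{\rmK_R})$.

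First, the equivalence of (1) and (2) is immediate: by Theorem \ref{1.1} we have $\rmr(R\ltimes M)=\rmr_R(M)$ if and only if $\mu_R(\Coker t^M_{\rmK_R})=0$, and since $\Coker t^M_{\rmK_R}$ is a finitely generated module over the local ring $(R,\m)$, Nakayama's lemma gives $\mu_R(\Coker t^M_{\rmK_R})=0$ exactly when $\Coker t^M_{\rmK_R}=0$, i.e. when $t^M_{\rmK_R}$ is surjective. (One should note that $\Hom_R(M,\rmK_R)\otimes_R M$ is finitely generated because $M$ is a finitely generated module over a Noetherian ring, so the cokernel is indeed finitely generated.)

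Next I would treat the equivalence of (2) and (3). Here one invokes \cite[Proposition 5.2]{BV}, which is precisely the statement that a MCM module $M$ is residually faithful if and only if the trace map $t^M_{\rmK_R}\colon\Hom_R(M,\rmK_R)\otimes_R M\to \rmK_R$ is surjective; equivalently, the trace ideal of $M$ in $\rmK_R$ is all of $\rmK_R$. For completeness I would sketch the underlying mechanism: choosing a parameter ideal $\fkq$ that is a reduction (so that $\rmK_R/\fkq\rmK_R\cong\rmK_{R/\fkq}$ is the canonical module of the Artinian local ring $R/\fkq$, hence a faithful injective hull of the residue field), faithfulness of $M/\fkq M$ over $R/\fkq$ translates via Matlis duality into the statement that the evaluation/trace map into $\rmK_{R/\fkq}$ is surjective, and then one lifts this back to $R$ using that $\rmK_R$ is MCM and that everything is compatible with reduction modulo the $\rmK_R$-regular sequence generating $\fkq$; the surjectivity descends and ascends by Nakayama. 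This is the step I expect to be the main obstacle, but it is exactly the content of \cite[Proposition 5.2]{BV}, so in the paper it suffices to cite it.

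Finally I would note that the hypothesis that $R$ possesses a canonical module $\rmK_R$ is what makes both Theorem \ref{1.1} and \cite[Proposition 5.2]{BV} applicable, and that the definition of residual faithfulness is independent of the chosen parameter ideal (the ``eventually, for every'' clause), so the equivalence is unambiguous. Assembling the two equivalences $(1)\Leftrightarrow(2)$ and $(2)\Leftrightarrow(3)$ completes the proof. $\qed$
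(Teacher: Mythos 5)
Your proof is correct, and your treatment of $(1)\Leftrightarrow(2)$ is exactly the paper's: Theorem \ref{1.5.3} gives $\rmr(R\ltimes M)=\rmr_R(M)+\mu_R(\Coker t^M_{\rmK_R})$, and Nakayama converts the vanishing of $\mu_R$ into surjectivity of $t^M_{\rmK_R}$. Where you diverge is on the other leg: you close the triangle through $(2)\Leftrightarrow(3)$ by citing \cite[Proposition 5.2]{BV} (with a Matlis-duality sketch), whereas the paper closes it through $(1)\Leftrightarrow(3)$, which is Proposition \ref{1.5.2} -- an immediate consequence of the elementary socle computation in Theorem \ref{pro1}~(1), namely that $\rmr(R\ltimes M)=\rmr_R(M)$ holds exactly when $M/\fkq M$ is a faithful $R/\fkq$-module. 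The practical difference: your route is legitimate (the paper itself attributes $(2)\Leftrightarrow(3)$ to Brennan--Vasconcelos) and shorter if one accepts the external citation, but it makes the corollary depend on \cite{BV}; the paper's route is self-contained, uses no canonical-module duality beyond Theorem \ref{1.5.3} itself, and is what lets the authors say that Theorem \ref{1.5.3} \emph{covers} \cite[Proposition 5.2]{BV}, i.e.\ the equivalence $(2)\Leftrightarrow(3)$ comes out as a byproduct rather than an input. A minor point in your sketch: there is no need to ask that the parameter ideal $\fkq$ be a reduction of $\m$; any system of parameters is a regular sequence on $\rmK_R$, so $\rmK_R/\fkq \rmK_R\cong\rmK_{R/\fkq}$ already holds, and residual faithfulness is independent of the choice of $\fkq$ (Proposition \ref{1.5.2}).
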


%\noindent
In Section 3, we will also show the following, where $\operatorname{\Omega CM}(R)$ denotes the class of the (not necessarily minimal) first syzygy modules of MCM $R$-modules.

\begin{thm}\label{1.3}
Let $M \in \operatorname{\Omega CM}(R)$. Then
$$
\rmr(R \ltimes M) = \begin{cases} 
\rmr_R(M)  &\text{if $R$ is a direct summand of $M$}, \\
\rmr(R) + \rmr_R(M) &\text{otherwise.}
\end{cases}
$$
\end{thm}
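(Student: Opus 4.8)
The plan is to reduce to the case where $R$ has a canonical module and then feed Theorem~\ref{1.1} with an explicit description of $\Coker t^M_{\rmK_R}$. The reduction is by completion: the three invariants $\rmr(R\ltimes M),\ \rmr_R(M),\ \rmr(R)$ and the property ``$R$ is a direct summand of $M$'' are all unchanged when $R$ is replaced by its $\m$-adic completion $\widehat R$. Indeed $\widehat{R\ltimes M}=\widehat R\ltimes\widehat M$; the Cohen-Macaulay type of a finitely generated module is preserved under the flat map $R\to\widehat R$ since the relevant $\Ext$-modules have finite length; completing a short exact sequence that exhibits $M$ as a syzygy of a MCM module shows $\widehat M\in\operatorname{\Omega CM}(\widehat R)$; and a homomorphism $M\to R$ is surjective exactly when it is nonzero modulo $\m$, so $R$ is a direct summand of $M$ if and only if $\widehat R$ is a direct summand of $\widehat M$. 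Hence I may assume $R$ is complete, so $\rmK_R$ exists and $\rmr(R\ltimes M)=\rmr_R(M)+\mu_R(\Coker t^M_{\rmK_R})$ by Theorem~\ref{1.1}.

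If $R$ is a direct summand of $M$, say $M=R\oplus M'$, then for every parameter ideal $\fkq$ of $R$ the module $M/\fkq M$ contains $R/\fkq$ as a direct summand, hence is a faithful $R/\fkq$-module; thus $M$ is residually faithful, and Corollary~\ref{1.2} gives $\rmr(R\ltimes M)=\rmr_R(M)$. This settles the first case, using nothing beyond $R$ being a summand of $M$.

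For the second case, assume $R$ is not a direct summand of $M$ and fix an exact sequence $0\to M\xrightarrow{\iota}F\to N\to0$ with $F$ free of finite rank and $N$ a MCM module, which exists because $M\in\operatorname{\Omega CM}(R)$. The crucial point is that $N$ is MCM, so $\Ext_R^1(N,\rmK_R)=0$ and applying $\Hom_R(-,\rmK_R)$ yields a \emph{surjection} $\iota^{\ast}\colon\Hom_R(F,\rmK_R)\twoheadrightarrow\Hom_R(M,\rmK_R)$. I would then tensor with $M$ and chase the evaluation maps to obtain $\Im t^M_{\rmK_R}=\Im\!\big(t^F_{\rmK_R}\circ(\mathrm{id}\otimes\iota)\big)$, and, writing $F=R^{r}$ with coordinate projections $\pi_1,\dots,\pi_r$, identify this image with $\b\rmK_R$, where $\b=\sum_{j=1}^{r}(\pi_j\circ\iota)(M)$ is the ideal of $R$ generated by the coordinates of the elements of $\iota(M)$. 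Thus $\Coker t^M_{\rmK_R}\cong\rmK_R/\b\rmK_R$. Finally $\b\subseteq\m$, for otherwise some generator $(\pi_j\circ\iota)(x)$ of $\b$ is a unit, so $\pi_j\circ\iota\colon M\to R$ is surjective and therefore splits, contradicting the assumption. Hence $\b\rmK_R\subseteq\m\rmK_R$, so $\mu_R(\Coker t^M_{\rmK_R})=\mu_R(\rmK_R/\b\rmK_R)=\mu_R(\rmK_R)=\rmr(R)$, the last equality being the standard fact that the Cohen-Macaulay type equals the minimal number of generators of the canonical module. Plugging this into Theorem~\ref{1.1} gives $\rmr(R\ltimes M)=\rmr(R)+\rmr_R(M)$.

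The only genuinely load-bearing step, and the sole place where the hypothesis $M\in\operatorname{\Omega CM}(R)$ is used, is the vanishing $\Ext_R^1(N,\rmK_R)=0$: it supplies the surjectivity of $\iota^{\ast}$ and thereby lets me trade the possibly complicated module $\Hom_R(M,\rmK_R)$ for a quotient of the ``free-like'' module $\Hom_R(F,\rmK_R)\cong\rmK_R^{r}$ and compute $\Im t^M_{\rmK_R}$ by hand; everything after that is formal bookkeeping. Note that I never need $\b$ to coincide with the full trace ideal $\sum_{\phi\in\Hom_R(M,R)}\phi(M)$ — only the inclusion $\b\subseteq\m$ is used, and that is the easy ``splitting'' observation. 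The completion reduction in the first paragraph is routine but cannot be skipped, since Theorem~\ref{1.1} presupposes a canonical module while Theorem~\ref{1.3} does not.
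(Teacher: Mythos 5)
Your proof is correct, but it takes a genuinely different route from the paper's for the main case. The paper never introduces the canonical module here: it first proves a Takahashi-type lemma (Lemma~\ref{1.4.1}) saying that for $M\in\operatorname{\Omega CM}(R)$ with embedding $0\to M\to F\to N\to 0$, the conditions ``$M$ residually faithful'', ``$M\not\subseteq\m F$'' and ``$R$ is a direct summand of $M$'' are equivalent; then, when $R$ is not a summand, it works modulo a parameter ideal $\fkq$: since $N$ is MCM the sequence stays exact after reduction, $M/\fkq M$ sits inside $\m{\cdot}(F/\fkq F)$, hence $(\fkq:_R\m)M=\fkq M$, and Theorem~\ref{pro1}(2) finishes the job with no completion and no $\rmK_R$. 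You instead complete, invoke Theorem~\ref{1.1}, and compute $\Coker t^M_{\rmK_R}\cong\rmK_R/\b\rmK_R$ with $\b\subseteq\m$ the coordinate ideal of $\iota(M)$, using $\Ext_R^1(N,\rmK_R)=0$ to make $\iota^\ast$ surjective; this is sound (the commuting square $t^M\circ(\iota^\ast\otimes 1)=t^F\circ(1\otimes\iota)$ and right-exactness of tensor do give $\Im t^M_{\rmK_R}=\b\rmK_R$, and $\b\subseteq\m$ follows exactly as you say). The two arguments rest on parallel uses of the MCM hypothesis on $N$: the paper uses $\Tor$-rigidity along $\fkq$ (depth), you use $\Ext^1(N,\rmK_R)=0$. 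What the paper's route buys is economy and extra information: it avoids the canonical module and the completion reduction altogether (your reduction is fine but its ``summand descends to the completion'' step deserves the one-line trace-ideal/faithful-flatness justification), and it establishes the stronger Ulrich-type equality $(\fkq:_R\m)M=\fkq M$, which is exactly the condition studied in Section~4. What your route buys is an explicit identification of $\Coker t^M_{\rmK_R}$ in the complete case, making visible why the defect is exactly $\rmr(R)$ whenever $M$ sits inside $\m F$. Also note that in the summand case you could quote Proposition~\ref{1.5.2} directly and skip $\rmK_R$ there as well.
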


In Section 4, we are concentrated in the latter case where $\rmr(R \ltimes M) = \rmr(R) + \rmr_R(M)$, which is closely related to the theory of Ulrich modules (\cite{BHU, GOTWY1, GOTWY2, GTT2}). In fact, the equality $\rmr(R \ltimes M) = \rmr(R) + \rmr_R(M)$ is equivalent to saying that $(\fkq:_R\m)M = \fkq M$ for some (and hence every) parameter ideal $\fkq$ of $R$, so that all the Ulrich modules and all the syzygy modules $\Omega_R^i(R/\m)$ ~($i \ge d$) satisfy the above equality $\rmr(R \ltimes M) = \rmr(R) + \rmr_R(M)$  (Theorems \ref{1.5.1}, \ref{1.2}), provided $R$ is not a regular local ring (here $\Omega_R^i(R/\m)$ is considered in a minimal free resolution of $R/\m$).

In Section 5, we give the bound of $\sup \rmr(R \ltimes M)$, where $M$ runs through certain MCM $R$-modules. In particular, when $d=1$, we get the following (Corollary \ref{5.2}).

\begin{thm}\label{1.4}
Suppose that $(R,\m)$ is a Cohen-Macaulay local ring of dimension one and multiplicity $e$. Let $\calF$ be the set of $\m$-primary ideals of $R$. 
Then 
$$\underset{I \in \calF}{\sup}~\rmr(R \ltimes I)=
\begin{cases}
1 & \text{if $R$ is a $\operatorname{DVR}$,}\\
\rmr(R) + e & \text{otherwise.}
\end{cases}
$$
\end{thm}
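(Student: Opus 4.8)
The plan is to reduce the statement to the results already established for $\rmr(R \ltimes M)$ applied to $\m$-primary ideals $I$, viewed as MCM $R$-modules of rank one. First I would dispose of the $\operatorname{DVR}$ case: if $R$ is a $\operatorname{DVR}$, then every $\m$-primary ideal $I$ is free, so $R \ltimes I \cong R \ltimes R$; but this is a Gorenstein ring (indeed $R$ itself is its own canonical module), whence $\rmr(R \ltimes I) = 1$ for all $I \in \calF$, and the supremum is $1$.

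Now assume $R$ is not a $\operatorname{DVR}$, i.e.\ $R$ is a one-dimensional Cohen-Macaulay local ring that is not regular. For the lower bound $\underset{I \in \calF}{\sup}\ \rmr(R \ltimes I) \ge \rmr(R) + e$, the idea is to exhibit a single ideal realizing the value $\rmr(R)+e$. The natural candidate is a high syzygy of the residue field: by the remarks preceding Theorem~\ref{1.4} (the discussion in Section~4), any syzygy $M = \Omega_R^i(R/\m)$ with $i \ge d = 1$ satisfies $\rmr(R \ltimes M) = \rmr(R) + \rmr_R(M)$, since $R$ is not regular. Taking $I = \Omega_R^1(R/\m) = \m$, which is an $\m$-primary ideal, we get $\rmr(R \ltimes \m) = \rmr(R) + \rmr_R(\m)$; and since $d = 1$ and $R$ is not a $\operatorname{DVR}$, one has $\rmr_R(\m) = \mu_R(\m) - 1 + \depth\ \text{correction}$—more precisely, for a one-dimensional CM local ring the maximal ideal $\m$ satisfies $\rmr_R(\m) = \mu_R(\m)$ when $R$ is not a $\operatorname{DVR}$ (compute $\Ext^1_R(R/\m,\m)$ from the sequence $0 \to \m \to R \to R/\m \to 0$). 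Since $\mu_R(\m) \le e$ always, this only gives $\rmr(R\ltimes\m) \le \rmr(R)+e$, so to hit $e$ exactly one should instead take $I$ with maximal possible $\rmr_R(I)$; the cleanest route is to use an ideal $I$ with $\mu_R(I) = e$ (reduction modulo a minimal reduction, using that $R$ has an ideal of maximal embedding dimension type in the associated graded sense, or directly: pick a minimal reduction $(a)$ and an ideal $I$ with $(a) \subseteq I$, $I^2 = aI$, $\mu_R(I) = e$), for which $\rmr_R(I) = e$ and $(\fkq :_R \m)I = \fkq I$, giving $\rmr(R\ltimes I) = \rmr(R)+e$ by the Section~4 criterion.

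For the upper bound $\rmr(R \ltimes I) \le \rmr(R) + e$ for every $I \in \calF$, I would start from the general inequality $\rmr(R \ltimes I) \le \rmr(R) + \rmr_R(I)$ of Theorem~\ref{pro1}, so it suffices to show $\rmr_R(I) \le e$ for every $\m$-primary ideal $I$ in a one-dimensional CM local ring. This is where the main work lies, and it is the step I expect to be the principal obstacle. The bound should follow by choosing a minimal reduction $(a)$ of $I$ (possible since $R/\m$ is infinite may need to be arranged by the standard faithfully flat extension $R \to R[X]_{\m R[X]}$, which changes neither $e$, $\rmr(R)$, nor the relevant lengths), and then estimating $\rmr_R(I) = \mu_R(I/aI) \cdot (\text{socle correction})$; concretely $\rmr_R(I) = \ell_R\big((aI :_I \m)/aI\big)$, and one bounds this by $\ell_R(I/aI) = \ell_R(R/(a)) = e(a) = e(I) = e$ since $(a)$ is a reduction and $R$ is Cohen-Macaulay of dimension one. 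The inequality $\rmr_R(I) \le \ell_R(I/aI)$ is immediate because the socle of $I/aI$ injects into $I/aI$. Combining, $\rmr(R\ltimes I) \le \rmr(R) + \rmr_R(I) \le \rmr(R) + e$, and together with the lower bound the supremum equals $\rmr(R) + e$. The subtlety to watch is the residue-field-infinite reduction and making sure all four quantities in the statement are insensitive to it; that bookkeeping, rather than any deep idea, is the real content of the hard step.
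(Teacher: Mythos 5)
Your overall plan coincides with the paper's (the $\operatorname{DVR}$ case via Reiten, the upper bound via $\rmr(R\ltimes I)\le \rmr(R)+\rmr_R(I)$ plus a multiplicity estimate, the lower bound via an ideal that is Ulrich with respect to $\m$ with $e$ generators), but two steps do not hold up as written. In the upper bound you take $(a)$ a minimal reduction of $I$ and assert $\ell_R(I/aI)=\ell_R(R/(a))=e(a)=e(I)=e$. The first equality is fine when $a\in I$, but then $\ell_R(R/(a))=\rme^0_{(a)}(R)=\rme^0_I(R)$, which is in general strictly bigger than $e=\rme^0_{\m}(R)$: for $R=k[[t^2,t^3]]$ and $I=(t^4,t^5)$ one has $\ell_R(R/(t^4))=4>2=e$. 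So your estimate only gives $\rmr_R(I)\le \rme_I^0(R)$, which is not enough. The repair is to choose $(a)$ (after the flat extension $R\to R[X]_{\m R[X]}$, as you note) a minimal reduction of $\m$, not of $I$: since $I$ is a maximal Cohen--Macaulay module of rank one, $\rmr_R(I)=\ell_R\bigl((0):_{I/aI}\m\bigr)\le \ell_R(I/aI)=\rme^0_{(a)}(I)=\rme^0_{(a)}(R)=e$, which is exactly the computation in the theorem opening Section 5 (with $r=1$). (Your parenthetical claim $\rmr_R(\m)=\mu_R(\m)$ for non-$\operatorname{DVR}$s is also false—by Proposition \ref{typem}, $\rmr_R(\m)=\rmr(R)+1$, e.g.\ $2\ne 3$ for $k[[t^4,t^5,t^6]]$—but you abandon that route, so it causes no damage.)

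The more substantive gap is in the lower bound: you simply ``pick'' an ideal $I$ with $(a)\subseteq I$, $I^2=aI$ and $\mu_R(I)=e$, i.e.\ you assume the existence of an $\m$-primary ideal that is an Ulrich module with respect to $\m$ with $e$ generators. That existence is precisely the content of the paper's proof, and you neither prove it nor cite it. It is true and not deep, but it must be argued: the paper quotes \cite[Lemma (2.1)]{BHU} and also gives a direct construction, namely $A=\bigcup_{n>0}(\m^n:\m^n)\subseteq \rmQ(R)$, which equals $R[\frac{\m}{a}]$ for a minimal reduction $(a)$ of $\m$, is isomorphic to an $\m$-primary ideal of $R$, and satisfies $\m A=aA$ and $\mu_R(A)=\ell_R(A/aA)=\rme^0_{(a)}(A)=e$. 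Equivalently, $I=\m^n$ for $n\gg 0$ works, since $\m\cdot\m^n=a\m^n$ and $\mu_R(\m^n)=\ell_R(\m^n/\m^{n+1})=e$ for large $n$. Once such an $I$ is produced, your deduction that $\rmr_R(I)=\mu_R(I)=e$ and hence $\rmr(R\ltimes I)=\rmr(R)+e$ (via the Ulrich-module criterion of Section 4, or Theorem \ref{pro1}\,(2)) is correct, and the two bounds combine as you say.
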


In Section 6, we focus our attention on the case where $\dim R = 1$. The main objectives are the trace ideals and closed ideals. The notion of closed ideals was introduced by \cite{BV}, where one finds a beautiful theory of closed ideals. As for the theory of trace ideals, we refer to \cite{GIK, L} for the recent progress.  In Section 6, we compute the Cohen-Macaulay type $\rmr(R \ltimes I)$ for fractional trace or closed ideals $I$ over a one-dimensional Cohen-Macaulay local ring $R$, in terms of the numbers of generators of $I$ together with the Cohen-Macaulay type $\rmr_R(I)$ of $I$ as an $R$-module.

In what follows, unless otherwise specified, $(R,\m)$ denotes a Cohen-Macaulay local ring with $d = \dim R \ge 0$. When $R$ possesses the canonical module $\rmK_R$, for each $R$-module $M$ we denote $\Hom_R(M,\rmK_R)$ by $M^\vee$. Let $\rmQ(R)$ be the total ring of fractions of $R$. For $R$-submodules $X$ and $Y$ of $\rmQ(R)$, let $$X:Y = \{a \in \rmQ(R) \mid aY \subseteq X\}.$$
If we  consider ideals $I,J$ of $R$, we set $I:_RJ=\{a \in R \mid aJ \subseteq I\}$; hence $$I:_RJ = (I:J) \cap R.$$ For each finitely generated $R$-module $M$, let $\mu_R(M)$ (resp. $\ell_R(M)$) denote the number of elements in a minimal system of generators (resp. the length) of $M$. For an $\m$-primary ideal $\fka$ of $R$, we denote by $$\rme_\fka^0(M)= \lim_{n \to \infty}d!{\cdot}\displaystyle\frac{\ell_R(M/\fka^{n}M)}{n^d}$$
 the multiplicity of $M$ with respect to $\fka$.

%%%%%%%%%%%%%%%%%%%%%%%%%%%%%%%%%%%%%%%%%%%%%%%%%%%%%%%%%%%%%%%%%%%%%%%%%%%%%%%%%%%%%%%%%%%%%%%%%%%%%%%%%%%%%%%%%%%%%%%%%%%%%%%%%%%%%%%%%%%%%%%%%%%%%%%%%%%%%%%%%%%%%%%%%%%%%%%%%%%%%%%%%%%%%%%%%%%%%%%%%%%%%%%%%%%%%%%%%%%%%%%%%%%%%%%%%%%%

\section{The Cohen-Macaulay type of general idealizations}
In this section, we estimate the Cohen-Macaulay type of idealizations for general maximal Cohen-Macaulay modules over Cohen-Macaulay local rings. We begin with the following observation, which is the starting point of this research.

\begin{prop} \label{lemma1}
Let $(R,\m)$ be a $($not necessarily Noetherian$)$ local ring and let $M$ be an $R$-module. We set $A = R \ltimes M$ and denote by $\n = \m \times M$ the maximal ideal of $A$.  Then
$$(0):_A \n= \left(\left[(0):_R\m\right] \cap \Ann_RM\right) \times \left[(0):_M\m\right].$$
 Therefore, when $R$ is an Artinian local ring, $(0):_A \n= (0) \times \left[(0):_M\m\right]$ if and only if $\Ann_RM = (0)$.
\end{prop}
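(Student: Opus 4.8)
The plan is to read off $(0):_A\n$ directly from the multiplication rule of the idealization, with no finiteness hypothesis needed for the formula itself. By definition, $(a,x)\in A$ lies in $(0):_A\n$ exactly when $(a,x){\cdot}(b,y)=(ab,\ ay+bx)=(0,0)$ for all $b\in\m$ and all $y\in M$, i.e. when $ab=0$ and $ay+bx=0$ for every such pair. First I would extract the inclusion $\subseteq$ by specialization: setting $b=0$ forces $ay=0$ for all $y\in M$, so $a\in\Ann_RM$; setting $y=0$ forces $ab=0$ and $bx=0$ for all $b\in\m$, so $a\in(0):_R\m$ and $x\in(0):_M\m$. Hence $(0):_A\n\subseteq\left(\left[(0):_R\m\right]\cap\Ann_RM\right)\times\left[(0):_M\m\right]$.

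For the reverse inclusion I would simply verify that if $a\in\left[(0):_R\m\right]\cap\Ann_RM$ and $x\in(0):_M\m$, then for all $b\in\m$ and $y\in M$ one has $ab=0$, $ay=0$ and $bx=0$, so that $ay+bx=0$ and therefore $(a,x){\cdot}(b,y)=0$; thus $(a,x)\in(0):_A\n$. Combining the two inclusions yields the asserted equality.

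For the ``Therefore'' clause I assume $R$ is Artinian local and read off the formula: $(0):_A\n=(0)\times\left[(0):_M\m\right]$ holds if and only if $\left[(0):_R\m\right]\cap\Ann_RM=(0)$. If $\Ann_RM=(0)$ this is trivial. Conversely, suppose $\Ann_RM\neq(0)$; since $R$ is Artinian local, $\m$ is nilpotent, so there is a largest integer $n\ge 0$ with $\m^{n}\Ann_RM\neq(0)$, and then any nonzero element of $\m^{n}\Ann_RM$ lies in $\left[(0):_R\m\right]\cap\Ann_RM$, which is therefore nonzero. (Equivalently: $R$ has finite length as an $R$-module, so the nonzero submodule $\Ann_RM$ of $R$ has nonzero socle, and $\operatorname{Soc}(\Ann_RM)=\left[(0):_R\m\right]\cap\Ann_RM\neq(0)$.) I expect no serious obstacle in this argument; the one step that needs a little care is precisely this last one, which is exactly where the Artinian hypothesis (nilpotence of $\m$) is used.
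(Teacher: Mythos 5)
Your proof is correct and follows essentially the same route as the paper: a direct computation of $(0):_A\n$ from the multiplication rule, followed by the observation that a nonzero ideal of an Artinian local ring meets the socle nontrivially (a fact the paper merely asserts and you spell out via nilpotence of $\m$). No issues.
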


\begin{proof} Let $(a,x) \in A$. Then $(a,x){\cdot}(b,y)=0$ for all $(b,y) \in \n=\m \times M$ if and only if $ab=0, ay =0$, and $bx=0$ for all $b \in \m, y \in M$. Hence, the first equality follows. Suppose that $R$ is an Artinian local ring. Then, since $I= \Ann_RM$ is an ideal of $R$, $I  \ne (0)$ if and only if $[(0):_R\m] \cap I \ne (0)$, whence the second assertion follows.
\end{proof}

We now assume, throughout this section, that $(R,\m)$ is a Cohen-Macaulay local ring with $d = \dim R \ge 0$. We say that a finitely generated $R$-module $M$ is a {\it maximal Cohen-Macaulay} (MCM for short) $R$-module, if $\depth_RM=d$.

\begin{thm} \label{pro1}  Let $M$ be a MCM $R$-module and  $A = R \ltimes M$. Then 
$$\rmr_R(M) \le \rmr(A)  \le \rmr(R) + \rmr_R(M).$$ Let $\fkq$ be a parameter ideal of $R$  and set $\overline{R} = R/\fkq$, $\overline{M} = M/\fkq M$.  We then have the following. 
\begin{enumerate}
\item[$(1)$] $\rmr(A) =  \rmr_R(M)$ if and only if $\overline{M}$ is a faithful $\overline{R}$-module.
\item[$(2)$] $\rmr(A) =  \rmr(R) + \rmr_R(M)$ if and only if $(\fkq :_R\m)M = \fkq M$.
\end{enumerate}
\end{thm}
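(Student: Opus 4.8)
The plan is to reduce everything to the Artinian case by passing modulo a parameter ideal $\fkq$, and then to apply Proposition \ref{lemma1}. Set $A = R \ltimes M$, $\overline{R} = R/\fkq$, $\overline{M} = M/\fkq M$. The key observation is that if $\fkq = (a_1, \dots, a_d)$ is a parameter ideal of $R$, then the elements $(a_1, 0), \dots, (a_d, 0)$ of $A$ form a system of parameters for $A$, and in fact $A/(\fkq \times 0)A \cong \overline{R} \ltimes \overline{M}$ as rings. Moreover, since $M$ is MCM and $R$ is Cohen-Macaulay, the sequence $a_1, \dots, a_d$ is $R$-regular and $M$-regular, hence $(a_1,0), \dots, (a_d, 0)$ is an $A$-regular sequence (as $A = R \oplus M$ as an $R$-module). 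Therefore $\rmr(A) = \rmr(A/(\fkq\times 0)A) = \rmr(\overline{R} \ltimes \overline{M})$, and likewise $\rmr_R(M) = \rmr_{\overline{R}}(\overline{M})$ and $\rmr(R) = \rmr(\overline{R})$. This lets me assume $d = 0$ for the rest of the argument. First I would verify this reduction carefully, since everything downstream depends on it.

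With $d = 0$, the Cohen-Macaulay type of a module is just the length of its socle: $\rmr(\overline{R} \ltimes \overline{M}) = \ell\bigl((0):_{\n}\bigr)$ where $\n = \overline{\m} \times \overline{M}$. By Proposition \ref{lemma1},
$$(0):_A \n = \bigl(\,[(0):_{\overline{R}}\overline{\m}] \cap \Ann_{\overline{R}}\overline{M}\,\bigr) \times \bigl[(0):_{\overline{M}}\overline{\m}\bigr].$$
Taking lengths gives
$$\rmr(A) = \ell_{\overline{R}}\bigl([(0):_{\overline{R}}\overline{\m}] \cap \Ann_{\overline{R}}\overline{M}\bigr) + \rmr_{\overline{R}}(\overline{M}).$$
Since the first summand is a length of a submodule of the socle $(0):_{\overline{R}}\overline{\m}$, which has length $\rmr(\overline{R})$, we immediately get $\rmr_R(M) \le \rmr(A) \le \rmr(R) + \rmr_R(M)$, proving the displayed inequalities.

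For the two equivalences: statement (1) asks when the first summand vanishes, i.e. when $[(0):_{\overline{R}}\overline{\m}] \cap \Ann_{\overline{R}}\overline{M} = 0$. Since $\overline{R}$ is Artinian local, every nonzero ideal meets the socle; hence this intersection is zero if and only if $\Ann_{\overline{R}}\overline{M} = 0$, which is exactly the condition that $\overline{M}$ is faithful over $\overline{R}$ — this is the last assertion of Proposition \ref{lemma1}. For statement (2), the first summand equals its maximum $\rmr(\overline{R})$ if and only if $(0):_{\overline{R}}\overline{\m} \subseteq \Ann_{\overline{R}}\overline{M}$, i.e. $[(0):_{\overline{R}}\overline{\m}]\cdot \overline{M} = 0$. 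Unwinding the bar notation, $(0):_{\overline{R}}\overline{\m} = (\fkq :_R \m)/\fkq$, so this says $(\fkq:_R\m)M \subseteq \fkq M$, i.e. $(\fkq:_R\m)M = \fkq M$ (the reverse inclusion being automatic since $\fkq \subseteq \fkq :_R \m$). The main obstacle I anticipate is the reduction step — specifically checking cleanly that modding out $A$ by $(\fkq \times 0)A$ yields $\overline{R}\ltimes\overline{M}$ and that Cohen-Macaulay type is preserved under quotient by a regular sequence; once that is in hand, the rest is a direct socle computation via Proposition \ref{lemma1}.
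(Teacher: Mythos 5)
Your proposal is correct and follows essentially the same route as the paper: pass to $A/\fkq A \cong \overline{R}\ltimes\overline{M}$ (using that $\fkq A$ is a parameter ideal of the Cohen--Macaulay local ring $A$, so the type is preserved), apply Proposition \ref{lemma1} to the Artinian quotient, and read off the bounds and both equivalences from the length decomposition of the socle. The only difference is that you spell out the regularity of the sequence $(a_1,0),\dots,(a_d,0)$ on $A=R\oplus M$, which the paper leaves implicit.
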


\begin{proof} We set $\overline A = A/\fkq A.$ Therefore, $\overline A = \overline R \ltimes \overline M$.  
Since $A$ is a Cohen-Macaulay local ring and $\fkq A$ is a parameter ideal of $A$, we have  $\rmr(A) = \rmr(\overline A)$, and by Proposition \ref{lemma1} it follows that 
$$\begin{aligned}\rmr(A) = \ell_{\overline A} ((0):_{\overline A}\n) & =\ell_{\overline A} (\left(\left[(0):_{\overline R}\m\right] \cap \Ann_{\overline{R}}\overline{M}\right) \times \left[(0):_{\overline M}\m\right])\\
& = \ell_{\overline R}(\left[(0):_{\overline{R}}\m \right]\cap \Ann_{\overline{R}}\overline{M})  + \ell_{\overline R}\left((0):_{\overline M}\m\right) \\
& = \ell_{\overline R}(\left[(0):_{\overline{R}}\m\right]\cap \Ann_{\overline{R}}\overline{M})  + \rmr_R(M) \\
& \leqslant \ell_{\overline R}((0):_{\overline{R}}\m)  + \rmr_R(M) \\
& = \rmr(R)  + \rmr_R(M).
\end{aligned}$$
Hence, $\rmr_R(M) \le \rmr(A) \le \rmr(R) + \rmr_R(M)$, so that  by Proposition \ref{lemma1},  $\rmr(A) =  \rmr_R(M)$ if and only if $\overline{M}$ is a faithful $\overline{R}$-module. We have $\rmr(A)  =  \rmr(R) + \rmr_R(M)$ if and only if $(0):_{\overline{R}}\overline\m \subseteq \Ann_{\overline{R}}\overline{M}$, and the latter condition is equivalent to saying that $\fkq :_R \m \subseteq \fkq M: _R M$, that is $(\fkq :_R\m)M = \fkq M$.
\end{proof}

The following shows the difference $\rmr(A) - \rmr_R(M)$ in Theorem \ref{pro1} can be arbitrary among the interval $[0, \rmr(R)]$. Notice that $\rmr(R \ltimes R) = \rmr(R)$.

\begin{ex}\label{2.3.1}
Let $\ell \ge 2$ be an integer and $S= k[[X_1, X_2, \ldots, X_\ell]]$ the formal power series ring over a field $k$. Let $\fka = \Bbb I_2(\Bbb M)$ denote the ideal of $S$ generated by the maximal minors of the matrix $\Bbb M = \left(\begin{smallmatrix}
X_1&X_2& \ldots& X_{\ell-1}&X_\ell\\
X_2& X_3& \ldots&X_\ell&X_1^q\\
\end{smallmatrix}\right)
$
with $q \ge 2$. We set $R=S/\fka$. Then $R$ is a Cohen-Macaulay local ring of dimension one. For each integer $2 \le p \le \ell$, we consider the ideal $I_p= (x_1)+ (x_p, x_{p+1}, \ldots, x_\ell)$ of $R$, where $x_i$ denotes the image of $X_i$ in $R$. Then $\rmr(R \ltimes I_p)= (\ell-p+1) + \rmr_R(I_p)$, and $$\rmr_R(I_p)= \begin{cases} 
\ell  &\text{if $p=2$} \\
\ell-1 &\text{if $p \ge 3$}
\end{cases}
$$
for each $2 \le p \le \ell$.  
\end{ex}

\begin{proof}
Let $\m$ denote the maximal ideal of $R$.
We set $I = I_p$ and $x = x_1$. It is direct to check that $I^2 = xI$, where we  use the fact that $q \ge 2$. In particular, $\m^2 = x \m$.  We consider the exact sequence $$(E)\ \ \ 0 \to R/I \overset{\iota}{\to} I/xI \to I/(x) \to 0,$$
where $\iota(1) = x ~\mod ~xI$, and get $\Ann_RI/xI = I$, since $I^2 = xI$. Therefore, $\Ann_{R/(x)}I/xI = I/(x)$. Because $I/(x) \subseteq \m/(x) = (0):_{R/(x)}\m$, we get $$\ell_R([(0):_{R/(x)}\m] \cap \Ann_{R/(x)}I/xI) = \ell_R(I/(x)) = \ell-p+1,$$ whence
$$\rmr(R \ltimes I) =(\ell-p+1)+ \rmr_R(I)$$ by Theorem \ref{pro1}. Because $(x_2, x_3, \ldots, x_{p-1}){\cdot}(x_p, x_{p+1}, \ldots, x_\ell) \subseteq xI$, the above sequence $(E)$ remains exact on the socles, so that $$\rmr_R(I) = \rmr(R/I) + \rmr_R(I/(x)).$$ Therefore, $\rmr_R(I)=\ell$ if $p=2$, and $\rmr_R(I)= (p-2)+(\ell-p+1) = \ell-1$ if $p \ge 3$.
\end{proof}

Assume that $R$ is not a regular local ring and let $0 \le n \le \rmr(R)$ be an integer. Then, we suspect if there exists a MCM $R$-module $M$ such that $\rmr(R \ltimes M) = n + \rmr_R(M)$. When $R$ is the semigroup ring of a numerical semigroup, we however have an affirmative  answer. 

\begin{prop}\label{2.3.2}
Let $a_1, a_2, \ldots, a_\ell$ be positive integers such that $\operatorname{GCD}(a_1, a_2,\cdots,a_\ell)=1$. Let $H =\left<a_1, a_2, \ldots, a_\ell \right>$ be the numerical semigroup generated by $\{a_i\}_{1 \le i \le \ell}$. Let $k[[t]]$ denote the formal power series ring over a field $k$ and consider, inside of $k[[t]]$,  the semigroup ring $$R= k[[t^{a_1}, t^{a_2}, \ldots, t^{a_\ell}]]$$ of $H$ over $k$. We set $e = \min \{a_i \mid 1 \le i \le \ell\}$ and assume that $e > 1$, that is $R$ is not a $\operatorname{DVR}$. Let $r = \rmr(R)$. Then, for each integer $0 \le n \le r$, $R$ contains a non-zero ideal $I$ such that $\rmr(R \ltimes I) = n + \rmr_R(I)$.
\end{prop}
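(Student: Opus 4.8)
The plan is to fix the monomial parameter $x=t^e\in\m$ and reduce the statement to a combinatorial one about $H$. Writing $\overline R=R/(x)$, the displayed chain of equalities in the proof of Theorem~\ref{pro1}, applied with the maximal Cohen-Macaulay module $I$ (every nonzero ideal of the one-dimensional Cohen-Macaulay domain $R$ is MCM) and the parameter ideal $(x)$, gives
\[
\rmr(R\ltimes I)=\rmr_R(I)+\ell_R\!\left([(0):_{\overline R}\m]\cap\Ann_{\overline R}(I/xI)\right)
\]
for every nonzero ideal $I$ of $R$. So it suffices to produce, for each $0\le n\le r$, a nonzero ideal $I$ of $R$ whose second summand above equals $n$.

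I would first make the three objects on the right explicit. From $R=\bigoplus_{h\in H}kt^h$ and $xR=\bigoplus_{h\in H}kt^{e+h}$ we get $\overline R=\bigoplus_{h\in\mathrm{Ap}}k\,\overline{t^h}$, where $\mathrm{Ap}:=\{h\in H:h-e\notin H\}$ is the Ap\'ery set of $e$, and $[(0):_{\overline R}\m]$ has $k$-basis $\{\overline{t^h}:h\in W\}$ with $W:=\{h\in\mathrm{Ap}:h+a_i\notin\mathrm{Ap}\ \text{for all}\ 1\le i\le\ell\}$. Since $x$ is a non-zerodivisor, $|W|=\ell_R([(0):_{\overline R}\m])=\rmr(R/(x))=\rmr(R)=r$; moreover $0\notin W$, since $0\in W$ would force $\m\subseteq(x)$ and hence $R$ to be a $\operatorname{DVR}$, contrary to $e>1$. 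In particular $h>e$ for all $h\in W$. For a monomial ideal $I$ with value set $v(I)\subseteq H$, one checks directly that $\overline{t^h}$ kills $I/xI$ iff $t^hI\subseteq xI$ iff $(h-e)+v(I)\subseteq v(I)$; hence
\[
\rmr(R\ltimes I)=\rmr_R(I)+\#\{h\in W:(h-e)+v(I)\subseteq v(I)\}.
\]

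The ideals $I$ will be cut out by over-semigroups. Let $c$ be the conductor of $H$, so $c\in H$ and $\Z_{\ge c}\subseteq H$. For a numerical semigroup $H'$ with $H\subseteq H'\subseteq\Z_{\ge 0}$ put $I_{H'}:=\bigoplus_{j\in c+H'}kt^j$: since $H\subseteq H'$ and $H'$ is a subsemigroup of $\Z_{\ge0}$, the set $c+H'$ is a relative ideal of $H$, and $c+H'\subseteq\Z_{\ge c}\subseteq H$, so $I_{H'}$ is a genuine nonzero ideal of $R$. For $h\in W$ we have $(h-e)+(c+H')\subseteq c+H'$ exactly when $h-e\in H'$ (use $0\in H'$ for one direction, closure of $H'$ under addition for the other), so the formula above reads $\rmr(R\ltimes I_{H'})=\rmr_R(I_{H'})+\#\{h\in W:h-e\in H'\}$. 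Now run the classical chain $H=H_0\subsetneq H_1\subsetneq\cdots\subsetneq H_g=\Z_{\ge0}$ with $H_{k+1}=H_k\cup\{f(H_k)\}$, where $f(H_k)$ denotes the Frobenius number of $H_k$ (each $H_k$ is again a numerical semigroup, and $g$ is the genus of $H$). Set $\varphi(k)=\#\{h\in W:h-e\in H_k\}$. Then $\varphi(0)=0$ since $h-e\notin H$ for $h\in\mathrm{Ap}$; $\varphi(g)=|W|=r$ since $h-e\in\Z_{\ge0}$ for all $h\in W$ (here $h>e$); and $\varphi(k+1)-\varphi(k)\in\{0,1\}$ since $H_{k+1}\setminus H_k$ is a single integer. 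Hence $\varphi$ takes every value $0,1,\dots,r$, and choosing $k$ with $\varphi(k)=n$ and $I=I_{H_k}$ finishes the proof.

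The routine part is the second paragraph, i.e.\ identifying $\Soc(\overline R)$ via the Ap\'ery set and checking $t^hI\subseteq xI\Leftrightarrow(h-e)+v(I)\subseteq v(I)$; this is just monomial bookkeeping. The one load-bearing idea, and the step I expect to be the real point, is in the third paragraph: using the over-semigroup ideals $I_{H'}$ to convert the problem into controlling the purely combinatorial count $\#\{h\in W:h-e\in H'\}$, and then letting the standard ``fill one gap at a time'' chain of numerical semigroups sweep through all intermediate values by a discrete intermediate-value argument.
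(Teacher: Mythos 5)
Your proposal is correct, and it rests on the same reduction as the paper (the formula $\rmr(R\ltimes I)=\rmr_R(I)+\ell_R\bigl([(0):_{\overline R}\m]\cap\Ann_{\overline R}(I/t^eI)\bigr)$ coming from Theorem \ref{pro1} with the parameter $t^e$), but the construction of the ideals is genuinely different. The paper stays inside the socle ideal $(t^e):_R\m=t^e(\m:\m)$: writing $\operatorname{PF}(H)=\{\alpha_1<\cdots<\alpha_r\}$, it takes $I=(t^e)+(t^{\alpha_j+e}\mid p\le j\le r)$, proves the stability $I^2=t^eI$, and deduces from the sequence $0\to R/I\to I/t^eI\to I/(t^e)\to 0$ that $\Ann_{\overline R}(I/t^eI)=I/(t^e)$ lies in the socle, so the correction term is simply $\ell_R(I/(t^e))=r-p+1$; the stability $I^2=t^eI$ is then reused in the subsequent Remark and in Corollary \ref{2.3.3}. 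You instead prove a general formula for monomial ideals, namely that the correction term equals $\#\{h\in W:(h-e)+v(I)\subseteq v(I)\}$ with $W$ the monomial socle exponents, and then realize every value $0,\dots,r$ by the conductor translates $I_{H'}$ of oversemigroups $H\subseteq H'\subseteq\Z_{\ge 0}$, sweeping along the gap-filling chain $H=H_0\subsetneq\cdots\subsetneq H_g=\Z_{\ge0}$ via a discrete intermediate-value argument. Your route buys a reusable formula for arbitrary monomial ideals and avoids any stability computation, at the cost of not producing the ideals with $I^2=t^eI$ that the paper exploits later. Two small points of hygiene: $I_{H'}$ should be defined as the ideal generated by $\{t^j: j\in c+H'\}$ (equivalently, the power series supported on $c+H'$), not the algebraic direct sum of the lines $kt^j$; and when you compute the length of $[(0):_{\overline R}\m]\cap\Ann_{\overline R}(I/t^eI)$ as a cardinality, you are implicitly using that this intersection is spanned by the monomials it contains (true, by homogeneity with respect to the $t$-grading, and worth one sentence).
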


\begin{proof}

Let $\m$ be the maximal ideal of $R$ and set $B =\m : \m$. Then $B = R : \m$ since $R$ is not a DVR, and  $$(t^e):_R\m = (t^e):\m = t^e(R:\m)= t^eB.$$ We denote by $\operatorname{PF}(H) = \{\alpha_1 < \alpha_2 < \cdots < \alpha_r\}$ the pseudo-Frobenius numbers of $H$. Hence, $B = R + \sum_{1\le i \le r}Rt^{\alpha_i}$, so that $(t^e):_R\m= (t^e)+(t^{\alpha_i + e} \mid 1 \le i\le r)$. Let $1 \le p \le r$ be an integer and set $I = (t^e)+(t^{\alpha_j + e} \mid p \le j \le r) \subseteq (t^e):_R\m$. Let $\alpha_0 = 0$. We then have the following.

\begin{claim}\label{claim 1}
Let $0 \le i \le r$ and $p \le j \le r$ be integers. Then $t^{\alpha_i + e}t^{\alpha_j +e} \in t^eI$. Consequently, $I^2 = t^eI$. 
\end{claim}

\begin{proof}
Assume that  $t^{\alpha_i + e}t^{\alpha_j +e} \not\in t^eI$. Then $t^{\alpha_i + \alpha_j + e} \not\in I$. On the other hand, since $t^{\alpha_i}t^{\alpha_j} \in B=\m:\m$, we get $\alpha_i + \alpha_j = \alpha_k + h$ for some $0 \le k \le r$ and $h \in H$. If $h >0$, then $\alpha_i +\alpha_j \in H$, so that $t^{\alpha_i + \alpha_j + e} \in I$, which is impossible. Therefore, $h = 0$, and $\alpha_k - \alpha_j = \alpha_i \ge 0$, so that $k \ge j \ge p$. Hence, $t^{\alpha_i + \alpha_j + e} = t^{\alpha_k + e} \in I$. This is a contradiction.
\end{proof}

We now consider the exact sequence $0 \to R/I \to I/t^eI \to I/(t^e) \to 0$, and get that $\operatorname{Ann}_RI/t^eI= I$. Hence $$\operatorname{Ann}_{R/(t^e)}I/t^eI = I/(t^e) \subseteq (0):_{R/(t^e)}\m.$$  Therefore, $\rmr(R \ltimes I) = \ell_R(I/(t^e)) + \rmr_R(I) = n+\rmr_R(I)$, where $n=r - p+1$. For $n=0$, just take $I=R$. 
\end{proof}

\begin{remark} With the same notation as in the proof of Proposition \ref{2.3.2}, let $\rmK_R$ denote the canonical module of $R$ and consider the ideal $I = (t^e)+(t^{\alpha_j + e} \mid p \le j \le r)$. Then, because $I^2 = t^eI$ and $\m I=\m t^e$, by \cite[Proposition 6.1]{GMP} $R \ltimes I^\vee$ is an almost Gorenstein local ring, where $I^\vee = \Hom_R(I, \rmK_R)$. Since $\operatorname{Ann}_RI^\vee/t^eI^\vee= \operatorname{Ann}_R I/t^eI$, we get $$\rmr(R\ltimes I^\vee) = (r-p+1) + \rmr_R(I^\vee)= (r-p+1) + \mu_R(I),$$ so that $\rmr(R \ltimes I^\vee) = 2r-2p +3$.
\end{remark}

\begin{cor}\label{2.3.3}
With the same notation as in Proposition $\ref{2.3.2}$, assume that $a_1<a_2<\cdots <a_\ell$, and that $H$ is minimally generated by $\ell$ elements with $\ell = a_1 \ge 2$,  that is $R$ has maximal embedding dimension $\ell \ge 2$. Let $2 \le p \le \ell$ be an integer and set $I_p = (t^{a_1})+(t^{a_p},t^{a_{p+1}},\ldots, t^{a_\ell})$. Then $\rmr(R \ltimes I_p)= (\ell-p+1) + \rmr_R(I_p)$, and $$\rmr_R(I_p)= \begin{cases} 
\ell  &\text{if $p=2$} \\
\ell-1 &\text{if $p \ge 3$}
\end{cases}
$$
for each $2 \le p \le \ell$.  
\end{cor}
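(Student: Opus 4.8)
The plan is to deduce Corollary \ref{2.3.3} directly from Proposition \ref{2.3.2} together with its proof, since the hypothesis here is simply the special case of that proposition in which $R$ has maximal embedding dimension. The key observation is that when $H$ is minimally generated by $\ell = a_1$ elements, the minimal reduction is $\fkq = (t^{a_1}) = (t^e)$, and the pseudo-Frobenius numbers of $H$ are exactly $\operatorname{PF}(H) = \{a_i - a_1 \mid 2 \le i \le \ell\}$; this is the standard fact that for a maximal embedding dimension semigroup ring one has $\m^2 = t^{a_1}\m$ and $\rmr(R) = \ell - 1$, with $\Soc(R/(t^{a_1})) = \m/(t^{a_1})$ spanned by the classes of $t^{a_2}, \ldots, t^{a_\ell}$. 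Thus the ideal $I_p$ of the statement is, up to the indexing used in the proof of Proposition \ref{2.3.2}, one of the ideals $I = (t^e) + (t^{\alpha_j + e} \mid p' \le j \le r)$ considered there, with $r = \ell - 1$.

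First I would record the maximal embedding dimension facts: since $\mu_R(\m) = \ell$ equals the multiplicity $\rme(R) = a_1$, we have $\m^2 = t^{a_1}\m$, hence $r = \rmr(R) = \ell - 1$, and $\operatorname{PF}(H) = \{\alpha_1 < \cdots < \alpha_{\ell-1}\}$ with $\alpha_{i-1} = a_i - a_1$ for $2 \le i \le \ell$. Then I would identify $I_p = (t^{a_1}) + (t^{a_p}, \ldots, t^{a_\ell})$ with $(t^e) + (t^{\alpha_{j}+e} \mid p-1 \le j \le \ell-1)$ under the substitution $e = a_1$ and $\alpha_{i-1} + e = (a_i - a_1) + a_1 = a_i$. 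Applying Proposition \ref{2.3.2} (equivalently, rerunning Claim \ref{claim 1} and the exact sequence $0 \to R/I_p \to I_p/t^{a_1}I_p \to I_p/(t^{a_1}) \to 0$ in this setting) gives $I_p^2 = t^{a_1}I_p$, that $\operatorname{Ann}_{R/(t^{a_1})}(I_p/t^{a_1}I_p) = I_p/(t^{a_1}) \subseteq (0):_{R/(t^{a_1})}\m$, and therefore by Theorem \ref{pro1} that $\rmr(R \ltimes I_p) = \ell_R(I_p/(t^{a_1})) + \rmr_R(I_p) = (\ell - p + 1) + \rmr_R(I_p)$, since $I_p/(t^{a_1})$ is minimally generated by $t^{a_p}, \ldots, t^{a_\ell}$.

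It remains to compute $\rmr_R(I_p)$, which I would again extract from the same short exact sequence. Because $\operatorname{PF}(H) = \{a_2 - a_1, \ldots, a_\ell - a_1\}$, the sum $(t^{a_j} \mid 2 \le j \le p-1) \cdot (t^{a_j} \mid p \le j \le \ell)$ lands in $\m t^{a_1} = t^{a_1}\m \subseteq t^{a_1}I_p$ (using $\m^2 = t^{a_1}\m$), so the sequence $0 \to R/I_p \to I_p/t^{a_1}I_p \to I_p/(t^{a_1}) \to 0$ stays exact after applying $(0):_{(-)}\m$, giving $\rmr_R(I_p) = \rmr(R/I_p) + \rmr_R(I_p/(t^{a_1}))$. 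Now $I_p/(t^{a_1})$ is a $k$-vector space of dimension $\ell - p + 1$ killed by $\m$, so $\rmr_R(I_p/(t^{a_1})) = \ell - p + 1$, while $R/I_p$ has finite length and one checks $\rmr(R/I_p) = p - 2$ when $p \ge 3$ (its socle being spanned by the classes of $t^{a_2}, \ldots, t^{a_{p-1}}$) and $R/I_p = k$ giving $\rmr(R/I_p) = 1$ when $p = 2$ (here $I_2 = \m$). Summing, $\rmr_R(I_p) = (p-2) + (\ell - p + 1) = \ell - 1$ for $p \ge 3$, and $\rmr_R(I_2) = 1 + (\ell - 1) = \ell$.

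The main obstacle is purely bookkeeping: keeping the two indexing conventions (the $a_i$'s of the statement versus the $\alpha_j$'s of Proposition \ref{2.3.2}) aligned, and verifying carefully that the short exact sequence remains exact on socles, which hinges on the identity $\m^2 = t^{a_1}\m$ characteristic of maximal embedding dimension; once those are in place the result is immediate, and indeed this corollary is essentially a transcription of Example \ref{2.3.1} into semigroup-ring language, so no genuinely new difficulty arises.
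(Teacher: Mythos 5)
Your overall route is the same as the paper's: identify $I_p$ with the ideal of Proposition \ref{2.3.2} via $\operatorname{PF}(H)=\{a_i-a_1\mid 2\le i\le \ell\}$, conclude $\rmr(R\ltimes I_p)=(\ell-p+1)+\rmr_R(I_p)$, and then compute $\rmr_R(I_p)$ from the exactness on socles of $0\to R/I_p\to I_p/t^{a_1}I_p\to I_p/(t^{a_1})\to 0$ as in Example \ref{2.3.1}. However, the justification you give for the socle-exactness step contains a false inclusion. You argue that the products $t^{a_i}t^{a_j}$ (with $2\le i\le p-1$ and $p\le j\le \ell$) lie in $\m t^{a_1}=t^{a_1}\m\subseteq t^{a_1}I_p$; but $t^{a_1}\m\subseteq t^{a_1}I_p$ is equivalent, after cancelling the non-zerodivisor $t^{a_1}$, to $\m\subseteq I_p$, which fails whenever $p\ge 3$. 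For instance, in $R=k[[t^3,t^4,t^5]]$ with $p=3$ one has $I_3=(t^3,t^5)$ and $t^{a_1}t^{a_2}=t^7\in t^3\m$, yet $t^7\notin t^3I_3=(t^6,t^8)$. So the identity $\m^2=t^{a_1}\m$ by itself does not yield the containment you need.

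What is actually needed is $\m\cdot(t^{a_p},\ldots,t^{a_\ell})\subseteq t^{a_1}I_p$, and this is true, but it requires the pseudo-Frobenius argument of Claim \ref{claim 1} in the proof of Proposition \ref{2.3.2} (with $\alpha_{i-1}=a_i-a_1$ and $\alpha_0=0$): assuming $t^{a_i}t^{a_j}\notin t^{a_1}I_p$, one writes $(a_i-a_1)+(a_j-a_1)=\alpha_k+h$ with $h\in H$, using $t^{a_i-a_1}t^{a_j-a_1}\in\m:\m$, and derives a contradiction. This is exactly how the paper closes this step; you do invoke Claim \ref{claim 1} for the first half (to get $I_p^2=t^{a_1}I_p$ and the annihilator computation), but replace it with the faulty inclusion in the second half. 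Once you substitute Claim \ref{claim 1} (or rerun its proof) at this point, the remainder of your argument --- surjectivity on socles, $\rmr_R(I_p/(t^{a_1}))=\ell-p+1$, and $\rmr(R/I_p)=p-2$ for $p\ge 3$, $=1$ for $p=2$ --- is correct and agrees with the paper's proof.
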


\begin{proof}
Let $e = a_1$ and $r = \rmr(R)$. Hence $\rmr(R) = e -1$. Let $1 \le i,j \le \ell$ be integers. Then $i=j$ if $a_i \equiv a_j$ ~$\mod$ $e$, because $H$ is minimally generated by $\{a_i\}_{1 \le i \le \ell}$. Therefore,  $\operatorname{PF}(H) = \{a_2-e < a_3-e < \cdots < a_{e}-e\}$, so that $\rmr(R\ltimes I_p) = ( e-p+1) + \rmr_R(I_p)$ by Proposition \ref{2.3.2}. To get $\rmr_R(I_p)$, by the proof of Example \ref{2.3.1} it suffices to show that $\m {\cdot}(t^{a_p}, t^{a_{p+1}}, \ldots, t^{a_\ell}) \subseteq t^{a_1}I$, which follows from Claim \ref{claim 1} in the proof of Proposition \ref{2.3.2}.
\end{proof}

In the following two sections, Sections 3 and 4, we explore the extremal cases where $\rmr(R\ltimes M) = \rmr_R(M)$ and $\rmr(R \ltimes M) = \rmr(R)+\rm_R(M)$, respectively.

\section{Residually faithful modules and the case where $\rmr(R \ltimes M) = \rmr_R(M)$}
Let $(R,\m)$ be a Cohen-Macaulay local ring with $d = \dim R \ge 0$. In this section, we consider the case of Theorem \ref{pro1} (1), that is $\rmr(R \ltimes M) = \rmr_R(M)$. Let us  begin with the following.

\begin{defn}\label{1.5.1} Let $M$ be a MCM $R$-module. We say that $M$ is {\em residually faithful}, if $M/\fkq M$ is a faithful $R/\fkq$-module for some parameter ideal $\fkq$ of $R$.
\end{defn}

\noindent 
With this definition, Theorem \ref{pro1} (1) assures the following.

\begin{prop}\label{1.5.2}
Let $M$ be a MCM $R$-module. Then the following conditions are equivalent.
\begin{enumerate}[{\rm (1)}]
\item $\rmr(R \ltimes M) =\rmr_R(M)$.
\item $M$ is  a residually faithful $R$-module.
\item $M/\fkq M$ is a faithful $R/\fkq$-module for every parameter ideal $\fkq$ of $R$.
\end{enumerate}
\end{prop}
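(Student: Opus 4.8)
The plan is to derive Proposition \ref{1.5.2} essentially for free from Theorem \ref{pro1}(1), which already establishes that $\rmr(R \ltimes M) = \rmr_R(M)$ if and only if $\overline{M} = M/\fkq M$ is a faithful $\overline{R} = R/\fkq$-module, \emph{for the particular parameter ideal $\fkq$ we started with}. Since Theorem \ref{pro1}(1) holds verbatim for any choice of $\fkq$, the equivalence $(1) \Leftrightarrow (2)$ and the equivalence $(1) \Leftrightarrow (3)$ both follow at once: the left-hand condition $\rmr(R \ltimes M) = \rmr_R(M)$ makes no reference to $\fkq$, so if $\overline{M}$ is faithful over $\overline{R}$ for \emph{some} parameter ideal then $(1)$ holds, and then Theorem \ref{pro1}(1) applied to an \emph{arbitrary} parameter ideal forces $\overline{M}$ to be faithful for \emph{every} such ideal. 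Thus $(2) \Rightarrow (1) \Rightarrow (3) \Rightarrow (2)$, the last implication being trivial.

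In more detail, the steps I would carry out are: first, recall Definition \ref{1.5.1}, so that $(2)$ is precisely the statement that $M/\fkq M$ is faithful over $R/\fkq$ for \emph{some} parameter ideal $\fkq$. Second, invoke Theorem \ref{pro1}(1), noting that in its statement $\fkq$ is an arbitrary parameter ideal and the conclusion $\rmr(R\ltimes M) = \rmr_R(M) \iff \overline M$ faithful over $\overline R$ is therefore valid simultaneously for all parameter ideals. Third, observe that the quantity $\rmr(R \ltimes M)$ (and $\rmr_R(M)$) is intrinsic, independent of any parameter ideal. Combining these: $(2)$ gives one parameter ideal for which $\overline M$ is faithful, hence $\rmr(R\ltimes M) = \rmr_R(M)$, i.e.\ $(1)$; and $(1)$ together with Theorem \ref{pro1}(1) applied to each parameter ideal $\fkq$ yields that $\overline M$ is faithful over $\overline R$ for every $\fkq$, i.e.\ $(3)$; and $(3) \Rightarrow (2)$ since parameter ideals exist ($R$ being Cohen-Macaulay of dimension $d$).

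There is essentially no obstacle here: the only substantive content, namely the socle computation for the idealization and the translation into the faithfulness condition, has already been done in Proposition \ref{lemma1} and Theorem \ref{pro1}. The one point worth stating explicitly in the write-up is that Theorem \ref{pro1}(1) is a statement about an arbitrary but fixed $\fkq$, and that it is the \emph{parameter-independence} of the numerical invariant $\rmr(R\ltimes M)$ that lets us pass from ``some $\fkq$'' to ``every $\fkq$''. I would phrase the proof as a short three-line argument making exactly this observation, citing Theorem \ref{pro1}(1) for the bridge between the numerical condition and faithfulness modulo a parameter ideal.

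\begin{proof}
By Definition \ref{1.5.1}, condition (2) says that $M/\fkq M$ is a faithful $R/\fkq$-module for some parameter ideal $\fkq$ of $R$, and (3) says the same for every parameter ideal; in particular $(3) \Rightarrow (2)$ since $R$, being Cohen-Macaulay of dimension $d$, possesses parameter ideals. The numerical invariants $\rmr(R \ltimes M)$ and $\rmr_R(M)$ do not depend on any choice of parameter ideal. Now Theorem \ref{pro1}(1), applied to a single parameter ideal $\fkq$, shows that $\rmr(R \ltimes M) = \rmr_R(M)$ holds if and only if $M/\fkq M$ is faithful over $R/\fkq$. Hence, if (2) holds for some $\fkq$, then $\rmr(R \ltimes M) = \rmr_R(M)$, which is (1); and conversely, if (1) holds, then Theorem \ref{pro1}(1) applied to an \emph{arbitrary} parameter ideal $\fkq$ gives that $M/\fkq M$ is faithful over $R/\fkq$ for every $\fkq$, which is (3). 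This establishes $(2) \Rightarrow (1) \Rightarrow (3) \Rightarrow (2)$.
\end{proof}
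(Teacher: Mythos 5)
Your proof is correct and follows exactly the route the paper intends: the paper gives no separate argument for Proposition \ref{1.5.2}, stating only that Theorem \ref{pro1} (1) ``assures'' it, and your observation that the parameter-independence of $\rmr(R\ltimes M)$ lets one pass from ``some $\fkq$'' to ``every $\fkq$'' is precisely the implicit content of that remark.
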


For $R$-modules $M$ and $N$, let $$t= t_{N}^M: \Hom_R(M,N)\otimes_R M \to N$$ denote the $R$-linear map defined by $t(f\otimes m)=f(m)$ for all $f \in \Hom_R(M,N)$ and $m \in M$. With this notation, we have the following.

\begin{thm}\label{1.5.3}
Let $M$ be a MCM $R$-module and suppose that $R$ possesses the canonical module $\rmK_R$.  Let $C = \Coker~t_{\rmK_R}^M$. Then $$\rmr(R\ltimes M) = \rmr_R(M) + \mu_R(C).$$ 
\end{thm}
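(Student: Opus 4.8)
The plan is to reduce to the Artinian case and then compute the socle of the idealization directly in terms of the trace map. First I would choose a parameter ideal $\fkq$ of $R$, pass to $\overline{R}=R/\fkq$, $\overline{M}=M/\fkq M$, and $\overline{\rmK_R}=\rmK_R/\fkq\rmK_R = \rmK_{\overline{R}}$, noting as in the proof of Theorem \ref{pro1} that $\rmr(R\ltimes M)=\rmr(\overline R\ltimes\overline M)$ while $\rmr_R(M)=\rmr_{\overline R}(\overline M)$. The key point is that trace ideals behave well under this reduction: since $M$ is MCM and $\rmK_R$ is maximal Cohen-Macaulay, a regular sequence on $R$ is regular on $M$, on $\rmK_R$, and on $\Hom_R(M,\rmK_R)$, and $\Hom_R(M,\rmK_R)\otimes_R\overline R \cong \Hom_{\overline R}(\overline M,\overline{\rmK_R})$, so $\Coker(t^{\overline M}_{\overline{\rmK_R}}) \cong C\otimes_R\overline R = C/\fkq C$. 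Hence $\mu_R(C)=\mu_{\overline R}(C/\fkq C)$, and the whole statement reduces to the case $d=0$.

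So assume $R$ is Artinian. Then $\rmK_R=\E_R(R/\m)$ is the injective hull of the residue field, and Matlis duality $(-)^\vee=\Hom_R(-,\rmK_R)$ is exact and faithful with $\ell_R(X)=\ell_R(X^\vee)$. By Theorem \ref{pro1} (its proof, with $\fkq=(0)$) we have
$$\rmr(R\ltimes M)=\ell_R\bigl([(0):_R\m]\cap\Ann_R M\bigr)+\rmr_R(M),$$
so it suffices to prove $\mu_R(C)=\ell_R\bigl([(0):_R\m]\cap\Ann_R M\bigr)$. Now $\mu_R(C)=\ell_R(C/\m C)=\ell_R\bigl((C/\m C)^\vee\bigr)=\ell_R\bigl((0):_{C^\vee}\m\bigr)$. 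Applying Matlis duality to the right-exact sequence $\Hom_R(M,\rmK_R)\otimes_R M \xrightarrow{t} \rmK_R \to C \to 0$ and using the standard adjunction $\bigl(\Hom_R(M,\rmK_R)\otimes_R M\bigr)^\vee \cong \Hom_R(M,(\Hom_R(M,\rmK_R))^\vee)=\Hom_R(M,M^{\vee\vee})\cong\Hom_R(M,M)$, the dual of $t$ becomes the map $\rmK_R^\vee=R \to \Hom_R(M,M)$ sending $1$ to $\mathrm{id}_M$, i.e. the natural homomorphism $\lambda\colon R\to\End_R(M)$. Its kernel is precisely $\Ann_R M$, so $C^\vee\cong\Ker\lambda = \Ann_R M$ as an $R$-module. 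Therefore $(0):_{C^\vee}\m = (0):_{\Ann_R M}\m = [(0):_R\m]\cap\Ann_R M$, which gives exactly the length we wanted.

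The two steps I expect to need the most care are: (i) the identification of $t^\vee$ with the natural map $R\to\End_R(M)$ — one must track the isomorphisms carefully to see that the evaluation map dualizes to ``multiplication by $\mathrm{id}_M$'', using that $M$ is reflexive with respect to $\rmK_R$ (true since $M$ is MCM and $R$ is Artinian, or more generally by the standard canonical-duality theory); and (ii) verifying that the trace cokernel commutes with reduction modulo a parameter ideal, which rests on the fact that $\Ext^1_R(M,\rmK_R)=0$ for an MCM module over a Cohen-Macaulay ring admitting a canonical module, so that $\Hom$ and $-\otimes_R\overline R$ interact as required and no correction terms appear. Once these are in place the proof is a short diagram chase combined with the length bookkeeping of Matlis duality; I would present (i) as the main lemma and then assemble everything as above.
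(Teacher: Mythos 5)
Your argument is correct, but it is a genuinely different proof from the one in the paper. The paper works globally with the idealization $A=R\ltimes M$: it shows that $M^\vee\times \rmK_R$, with the twisted $A$-action $(a,m)\circ(f,x)=(af,f(m)+ax)$, is $\Hom_R(A,\rmK_R)\cong \rmK_A$, and then reads off $\rmr(A)=\mu_A(\rmK_A)=\mu_R(M^\vee)+\ell_R\bigl(\rmK_R/(\Im t^M_{\rmK_R}+\m \rmK_R)\bigr)=\rmr_R(M)+\mu_R(C)$ in one stroke, since $\n\rmK_A=\m M^\vee\times(\Im t^M_{\rmK_R}+\m\rmK_R)$. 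You instead reduce modulo a parameter ideal $\fkq$ (legitimate: $\rmr$ and $\rmr_R(M)$ are stable, and $C/\fkq C\cong\Coker t^{\overline M}_{\rmK_{\overline R}}$ because $\Ext^1_R(M,\rmK_R)=0$ for MCM $M$ makes $\Hom_R(M,\rmK_R)$ commute with the reduction), and then in the Artinian case use Matlis duality to show $C^\vee\cong\Ker\bigl(R\to\End_R(M)\bigr)=\Ann_RM$, whence $\mu_R(C)=\ell_R\bigl([(0):_R\m]\cap\Ann_RM\bigr)$, which combined with the formula established in the proof of Theorem \ref{pro1} gives the claim. Both routes are sound; the paper's buys the explicit description of the canonical module of $R\ltimes M$ (useful in its own right and quoted from \cite{GGHV, GK}), and avoids the two points you rightly flag as delicate (identifying $t^\vee$ with the natural map $R\to\End_R(M)$ under biduality, and the base-change compatibility of the trace cokernel), while your route makes transparent the pleasant identity $\mu_R(C)=\ell_{\overline R}\bigl([(0):_{\overline R}\m]\cap\Ann_{\overline R}\overline M\bigr)$, i.e., it explains directly how Theorem \ref{1.5.3} refines the inequality of Theorem \ref{pro1} and why residual faithfulness is equivalent to surjectivity of $t^M_{\rmK_R}$.
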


\begin{proof}
We set $K = \rmK_R$ and $A = R\ltimes M$. Let us make the $R$-module $M^\vee \times K$ into an $A$-module on which the $A$-action is defined by $$(a,m)\circ (f,x) = (af, f(m) +ax)$$
for each $(a,m) \in A$ and $(f,x) \in M^\vee \times K$. Then $M^\vee \times K \cong \Hom_R(A,K)$ as an $A$-module. Therefore, $\rmK_A = M^\vee \times K$, the canonical module of $A$ (\cite[Section 6, Augmented rings]{GGHV} or \cite[Section 2]{GK}). Let $\n = \m \times M$ denote the maximal ideal of $A$ and $L = \operatorname{Im} t_{\rmK_R}^M$. Then, since $\n (M^\vee \times \rmK_R) = \m M^\vee \times (L + \m \rmK_R)$, we get
  
$$
\begin{aligned}
\rmr(A)&=\mu_A(\rmK_A)\\
&=\ell_A([M^\vee \times K]/[\m M^\vee \times (L + \m K)]\\
&=\ell_R([M^\vee \oplus K]/[\m M^\vee \oplus (L + \m K)] \\
&= \ell_R(M^\vee/\m M^\vee) + \ell_R(K/(L + \m K))\\
&= \mu_R(M^\vee) + \mu_R(C)\\
&= \rmr_R(M) + \mu_R(C).\\
\end{aligned}
$$
\end{proof}

Theorem \ref{1.5.3} covers \cite[Proposition 5.2]{BV}. In fact, we have the following, where the equivalence of Conditions (1) and (3) follows from Proposition \ref{1.5.2}, and the equivalence of Conditions (1) and (2) follows from Theorem \ref{1.5.3}.

\begin{cor}[cf. {\cite[Proposition 5.2]{BV}}]\label{1.4}
Let $M$ be a MCM $R$-module and suppose that $R$ possesses the canonical module $\rmK_R$.  Then the following conditions are equivalent.
\begin{enumerate}[{\rm (1)}]
\item[$(1)$] $\rmr(R \ltimes M) = \rmr_R(M)$.
\item[$(2)$] The homomorphism $t_{\rmK_R}^M : \Hom_R(M, \rmK_R) \otimes_RM \to \rmK_R$ is surjective.
\item[$(3)$] $M$ is  a residually faithful $R$-module.
\end{enumerate}
\end{cor}

We note one example of residually faithful modules $M$ such that $M \not\cong R, \rmK_R$. 

\begin{ex}[{\cite[Example 7.3]{GTT3}}]
Let $k[[t]]$ be the formal power series ring over a field $k$ and consider $R= k[[t^9, t^{10}, t^{11}, t^{12}, t^{15}]]$ in $k[[t]]$. Then $\rmK_R=R + Rt+Rt^3+Rt^4$ and $\mu_R(\rmK_R)=4$.  Let $I = R + Rt$. Then the homomorphism $t^I_{\rmK_R}: \Hom_R(I,\rmK_R) \otimes_R I \to \rmK_R$ is an isomorphism of $R$-modules, so that $I$ is a residually faithful $R$-module, but $I \not\cong R, \rmK_R$, since $\mu_R(I) = 2$.
\end{ex}

Here we notice that Corollary \ref{1.4} recovers  the theorem of Reiten \cite{R} on Gorenstein modules. In fact, with the same notation as in Corollary \ref{1.4}, suppose that $R \ltimes M$ is a Gorenstein ring and let $\fkq$ be a parameter ideal of $R$. Then, since $\rmr(R\ltimes M)=1$, Corollary  \ref{1.4} implies that $\overline{M}=M/\fkq M$ is a faithful module over the Artinian local ring $\overline{R}=R/\fkq$ with  $\rmr_{\overline{R}}(\overline{M})=1$. Therefore, $\overline{M}$ is the injective envelope $\rmE_{\overline{R}}(R/\m)$ of the residue class field $R/\m$ of $\overline{R}$, so that $M \cong \rmK_R$ is the canonical module (that is a Gorenstein module of rank one) of $R$.

Residually faithful modules enjoy good properties. Let us summarize some of them.

\begin{prop}\label{1.5.4} Let $M$ be a MCM $R$-module. Then the following assertions hold true.
\begin{enumerate}[{\rm (1)}]
\item Let $a \in \m$ be a non-zerodivisor of $R$. Then $M$ is a residually faithful $R$-module if and only if so is the $R/(a)$-module $M/aM$. 
\item Let $(S,\n)$ be a Cohen-Macaulay local ring and let $\varphi : R \to S$ denote a flat local homomorphism of local rings. Then $M$ is a residually faithful $R$-module if and only if so is the $S$-module $S \otimes_RM$. Therefore, $M$ is a residually faithful $R$-module if and only if so is the $\widehat{R}$-module $\widehat{M}$, where $\widehat{*}$ denotes the $\m$-adic completion.
\item Suppose that $M$ is a residually faithful $R$-module. Then $M$ is a faithful $R$-module and $M_\fkp$ is a residually faithful $R_\fkp$-module for every $\fkp \in \Spec R$.
\end{enumerate}
\end{prop}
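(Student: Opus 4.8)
The plan is to establish the three assertions of Proposition \ref{1.5.4} by leveraging Proposition \ref{1.5.2}, which lets us test residual faithfulness against \emph{any} single parameter ideal, combined with standard behavior of faithfulness under base change along flat maps and along quotients by regular sequences. For (1), since $a$ is a non-zerodivisor of $R$, any parameter ideal $\fkq'$ of $R/(a)$ lifts to a parameter ideal $\fkq = \fkq' + (a)$ of $R$ (after choosing lifts of a system of parameters), and $(M/aM)/\fkq'(M/aM) \cong M/\fkq M$ as $R/\fkq = (R/(a))/\fkq'$-modules. Thus $M/\fkq M$ is faithful over $R/\fkq$ if and only if $M/aM$ is residually faithful over $R/(a)$; by Proposition \ref{1.5.2} this is equivalent to $M$ being residually faithful over $R$.

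For (2), the key point is that for a flat local homomorphism $\varphi : R \to S$ and a parameter ideal $\fkq$ of $R$, the ideal $\fkq S$ is a parameter ideal of $S$ (here $\dim S = \dim R + \dim S/\fkm S$, so one enlarges $\fkq S$ by lifts of a system of parameters of the closed fiber $S/\fkm S$ to get a genuine parameter ideal $\fkQ$ of $S$; alternatively, if $\varphi$ is flat with Cohen-Macaulay closed fiber one works directly). The cleanest route: reduce modulo $\fkq$ on the $R$-side, so $\overline{R} = R/\fkq$ is Artinian, $\overline{S} = S/\fkq S$ is flat over $\overline{R}$, and $\overline{S} \otimes_{\overline{R}} \overline{M} = \overline{S \otimes_R M}$ (up to the further quotient by the fiber parameters). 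A module over an Artinian local ring is faithful if and only if its annihilator is zero, and for a faithfully flat base change $\Ann_{\overline{S}}(\overline{S} \otimes_{\overline{R}} \overline{M}) = \Ann_{\overline{R}}(\overline{M}) \cdot \overline{S}$; hence $\overline{M}$ is faithful over $\overline{R}$ if and only if $\overline{S} \otimes_{\overline{R}} \overline{M}$ is faithful over $\overline{S}$. Translating back through Proposition \ref{1.5.2} gives the equivalence. The completion statement is the special case $S = \widehat{R}$, whose closed fiber is trivial.

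For (3), faithfulness of $M$ over $R$ itself: since $M$ is MCM and $R$ is Cohen-Macaulay, $\Ann_R M$ is either $(0)$ or $\fkm$-primary (as $\dim M = d$ forces $\dim R/\Ann_R M = d$, and then $\Ann_R M$ has height $0$); picking a parameter ideal $\fkq$, faithfulness of $M/\fkq M$ over $R/\fkq$ gives $\Ann_R M \subseteq \bigcap_n (\fkq^n + \Ann_R M) $... more directly, $\Ann_R M \cdot M \subseteq \fkq M$ would follow and Nakayama-type arguments on the Artinian quotient pin $\Ann_R M$ into every parameter ideal, whence $\Ann_R M = \bigcap_{\fkq} \fkq = (0)$ by Krull intersection. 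For the localization statement, given $\fkp \in \Spec R$ choose a parameter ideal $\fkq$ of $R$ contained in $\fkp$ extended appropriately; it is cleaner to observe that $M$ residually faithful over $R$ means $t^M_{\rmK_R}$ is surjective (Corollary \ref{1.4}) when $\rmK_R$ exists, and surjectivity localizes; but to stay self-contained in general one uses the faithfulness-of-$M/\fkq M$ formulation and the fact that a system of parameters of $R$ contained in $\fkp$ maps to a system of parameters of $R_\fkp$ after localizing, together with $(M/\fkq M)_\fkp = M_\fkp/\fkq M_\fkp$ and faithfulness localizing. The main obstacle is (2): one must be careful that $\fkq S$ need not itself be a parameter ideal of $S$ when the closed fiber has positive dimension, so the argument must either assume/handle a Cohen-Macaulay fiber or explicitly extend $\fkq S$ by fiber parameters and check the annihilator computation survives this extra quotient — this bookkeeping, while routine, is where the proof requires genuine care.
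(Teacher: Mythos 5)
Your part (1) and the faithfulness half of (3) are essentially the paper's arguments (reduction via Proposition \ref{1.5.2}, then $\Ann_RM\subseteq\Ann_R\bigl(M/(a_1^n,\dots,a_d^n)M\bigr)=(a_1^n,\dots,a_d^n)$ and Krull intersection), though your opening claim in (3) that $\Ann_RM$ is ``either $(0)$ or $\m$-primary'' is false as stated — it only has height $0$ — and you rightly abandon it. The first genuine gap is in (2). Your reduction is complete only when $\dim S/\m S=0$: there $\fkq S$ is a parameter ideal of $S$, $\Ann_S(L/\fkq L)=\Ann_R(M/\fkq M)S$ by flat base change ($L=S\otimes_RM$), and faithful flatness finishes. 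For a positive-dimensional fiber, the step you defer as ``routine bookkeeping'' is in fact the heart of the proof and does not follow from the annihilator formula: to reach a parameter ideal of $S$ you must further divide $S/\fkq S$ by elements $g_1,\dots,g_n$, and $S/(g_1,\dots,g_i)S$ is in general no longer flat over $R$, so the identity $\Ann_{S}(L/\fkQ L)=\Ann_R(\,\cdot\,)S+\fkQ$ is simply not available for an arbitrary lift of a fiber system of parameters. What makes it work — and this is the paper's proof — is to choose $g\in\n$ regular on the closed fiber $S/\m S$ (possible because the fiber is Cohen–Macaulay, $R$ and $S$ both being CM); then $g$ is $S$-regular, $S/gS$ is again flat over $R$ with fiber dimension one less, and one inducts on $\dim S/\m S$, using assertion (1) applied to $S$ and $g$ to pass between $L$ and $L/gL$. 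You name this alternative but never execute it, so (2) is unproved beyond the zero-fiber case.

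The second gap is the localization claim in (3). As written your fallback argument cannot work: a full system of parameters of $R$ generates an $\m$-primary ideal, so it is never contained in a non-maximal prime $\fkp$, and for $\m$-primary $\fkq$ one has $(M/\fkq M)_\fkp=(0)$ when $\fkp\neq\m$, so ``faithfulness localizes'' yields nothing. Two repairs are possible. Elementarily: choose $a_1,\dots,a_h\in\fkp$, $h=\height\,\fkp$, forming part of a system of parameters of $R$ whose images are a system of parameters of $R_\fkp$; by (1) iterated, $M/(a_1,\dots,a_h)M$ is residually faithful over $R/(a_1,\dots,a_h)$, hence faithful by the first half of (3), and localizing the vanishing annihilator shows $M_\fkp/(a_1,\dots,a_h)M_\fkp$ is faithful over the Artinian ring $R_\fkp/(a_1,\dots,a_h)R_\fkp$, i.e.\ $M_\fkp$ is residually faithful. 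Alternatively (the paper's route): use (2) to pass to $\widehat R$ so that $\rmK_{\widehat R}$ exists, choose $P\in\Min_{\widehat R}\widehat R/\fkp\widehat R$, exploit the flat local map $R_\fkp\to\widehat R_P$ with zero-dimensional fiber via (2) again, and conclude from Corollary \ref{1.4} because surjectivity of $t^M_{\rmK_R}$ localizes, $\rmK_{R_\fkp}=(\rmK_R)_\fkp$. Your parenthetical ``when $\rmK_R$ exists'' is exactly this mechanism, but without the completion/minimal-prime step it is not available in general, and the self-contained version you offer instead is the one that fails.
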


\begin{proof}
(1) This directly follows from Proposition \ref{1.5.2}.

(2) We set $n = \dim S/\m S$ and $L = S \otimes_RM$.  Firstly, suppose that $n = 0$. Let $\fkq$ be a parameter ideal of $R$ and set $\fka = \Ann_RM/\fkq M$. Then $\fka S = \Ann_S(L/\fkq L)$. If $\fka = \fkq$, then $\fkq S = \Ann_SL/\fkq L$, so that $L$ is a residually faithful $S$-module, since $\fkq S$ is a parameter ideal of $S$. Conversely, suppose that $L$ is  a residually faithful $S$-module. We then have $\fka S = \fkq S$ by Proposition \ref{1.5.2}, so that $\fka = \fkq$, and $M$ is a residually faithful $R$-module.

We now assume that $n >0$ and that Assertion (2) holds true for $n-1$. Let $g \in \n$  and suppose that $g$ is $S/\m S$-regular. Then $g$ is $S$-regular and the composite homomorphism $$R \to S \to S/gS$$ remains flat and local, so that $M$ is a residually faithful $R$-module if and only if so is the $S/gS$-module $L/gL$. Since $\dim S/(gS + \m S) = n-1$, the latter condition is, by Assertion (1), equivalent to saying that $L$ is a residually faithful $S$-module.

(3) Let $a_1, a_2, \ldots, a_d$ be a system of parameters of $R$. We then have by Proposition \ref{1.5.2} 
$$\Ann_RM \subseteq \Ann_RM/(a_1^n, a_2^n, \ldots, a_d^n)M =(a_1^n, a_2^n, \ldots, a_d^n)$$ for all $n >0$. Therefore, $M$ is a faithful $R$-module. Let $\fkp \in \Spec R$ and choose $P \in \Min_{\widehat{R}}\widehat{R}/\fkp \widehat{R}$. Then, $\fkp = P \cap R$, and we get a flat local homomorphism $R_\fkp \to \widehat{R}_P$ of local rings such that $\dim \widehat{R}_P/\fkp \widehat{R}_P= 0$. Therefore, to see that $M_\fkp$ is a residually faithful $R_\fkp$-module, by Assertion (1) it suffices to show that $\widehat{M}_P$ is a residually faithful $\widehat{R}_P$-module. Consequently,  because $\widehat{M}$ is a residually faithful $\widehat{R}$-module by Assertion (1), passing to the $\m$-adic completion $\widehat{R}$ of $R$, without loss of generality we may assume that $R$ possesses the canonical module $\rmK_R$. Then, the current assertion readily follows from Corollary \ref{1.4}, because $$\rmK_{R_\fkp}= (\rmK_R)_\fkp= \left(\operatorname{Im} t_{\rmK_R}^M\right)_\fkp = \operatorname{Im} t_{\rmK_{R_\fkp}}^{M_\fkp}.$$

\end{proof}

By Proposition \ref{1.5.4},  we have the following.

\begin{cor}
Let $M$ be a MCM $R$-module. If $\rmr(R\ltimes M) = \rmr_R(M)$, then $\rmr(R_\fkp \ltimes M_\fkp) = \rmr_{R_\fkp}(M_\fkp)$ for every $\fkp \in \Spec R$.
\end{cor}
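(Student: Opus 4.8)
The statement to prove is the final corollary: if $M$ is a MCM $R$-module with $\rmr(R \ltimes M) = \rmr_R(M)$, then $\rmr(R_\fkp \ltimes M_\fkp) = \rmr_{R_\fkp}(M_\fkp)$ for every $\fkp \in \Spec R$.

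The plan is to reduce everything to the characterization of the equality $\rmr(R\ltimes M) = \rmr_R(M)$ in terms of residual faithfulness, established in Proposition~\ref{1.5.2}, together with the localization statement for residually faithful modules in Proposition~\ref{1.5.4}(3). Concretely, the hypothesis $\rmr(R\ltimes M) = \rmr_R(M)$ says, by Proposition~\ref{1.5.2}, that $M$ is a residually faithful $R$-module. By Proposition~\ref{1.5.4}(3), $M_\fkp$ is then a residually faithful $R_\fkp$-module for every $\fkp \in \Spec R$. One must only observe that $M_\fkp$ is a MCM $R_\fkp$-module: indeed $\depth_{R_\fkp} M_\fkp \ge \depth_R M - \dim(R/\fkp) \ge \dim R - \dim(R/\fkp) = \dim R_\fkp$ since $R$ is Cohen--Macaulay, and the reverse inequality is automatic, so $M_\fkp$ is MCM over the Cohen--Macaulay local ring $R_\fkp$. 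Applying Proposition~\ref{1.5.2} once more, this time over the local ring $R_\fkp$, gives $\rmr(R_\fkp \ltimes M_\fkp) = \rmr_{R_\fkp}(M_\fkp)$, which is exactly the desired conclusion.

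First I would invoke Proposition~\ref{1.5.2} to translate the hypothesis into the residual faithfulness of $M$; next I would apply the localization part of Proposition~\ref{1.5.4} to pass this property to $M_\fkp$; then I would check the (essentially routine) fact that $M_\fkp$ remains maximal Cohen--Macaulay over $R_\fkp$ using the Cohen--Macaulayness of $R$; and finally I would apply the equivalence of Proposition~\ref{1.5.2} in reverse over $R_\fkp$. Since the excerpt also offers Corollary~\ref{1.4}, one could alternatively phrase the argument through the surjectivity of the trace map $t^M_{\rmK_R}$ and its compatibility with localization, $\operatorname{Im} t^{M_\fkp}_{\rmK_{R_\fkp}} = (\operatorname{Im} t^M_{\rmK_R})_\fkp$, but this requires the existence of $\rmK_R$ and is not needed in the general case.

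I do not expect any genuine obstacle here: the content of the corollary is entirely packaged in Propositions~\ref{1.5.2} and~\ref{1.5.4}, and the proof is a two-line chain of implications once one records that localization preserves the MCM property over a Cohen--Macaulay base. The only point requiring a word of care is that "residually faithful" is defined only for MCM modules, so one must verify that $M_\fkp$ is MCM before it even makes sense to speak of its residual faithfulness over $R_\fkp$; this is the step where the hypothesis that $R$ (hence $R_\fkp$) is Cohen--Macaulay is used.
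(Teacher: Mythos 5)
Your proposal is correct and is exactly the paper's intended argument: the paper derives this corollary directly from Proposition \ref{1.5.2} (the equivalence with residual faithfulness) together with Proposition \ref{1.5.4} (3) (localization of residually faithful modules), which is the chain you describe. The only extra content you add is the routine check that $M_\fkp$ is MCM over $R_\fkp$, which the paper leaves implicit.
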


\begin{cor} Let $M$ be a MCM $R$-module, and suppose that $R$ possesses the canonical module $\rmK_R$. If $M$ is a residually faithful $R$-module, then so is $M^\vee$.
\end{cor}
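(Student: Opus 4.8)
The statement to prove is: if $M$ is a MCM $R$-module over a Cohen-Macaulay local ring possessing a canonical module $\rmK_R$, and $M$ is residually faithful, then $M^\vee = \Hom_R(M,\rmK_R)$ is also residually faithful.

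The plan is to exploit the duality functor $(-)^\vee = \Hom_R(-,\rmK_R)$ on the category of MCM $R$-modules, together with the characterization of residual faithfulness furnished by Corollary \ref{1.4}. First I would record the standard facts: for a MCM $R$-module $M$, the dual $M^\vee$ is again MCM, the biduality map $M \to M^{\vee\vee}$ is an isomorphism, and $\rmr_R(M) = \mu_R(M^\vee)$ while $\mu_R(M) = \rmr_R(M^\vee)$. Next I would translate the hypothesis via Corollary \ref{1.4}: $M$ residually faithful means the trace map $t^M_{\rmK_R} \colon M^\vee \otimes_R M \to \rmK_R$ is surjective. The goal, again by Corollary \ref{1.4} applied to $M^\vee$, is to show $t^{M^\vee}_{\rmK_R} \colon (M^\vee)^\vee \otimes_R M^\vee \to \rmK_R$ is surjective. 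Using the biduality isomorphism $M \cong M^{\vee\vee}$, the target map is identified, up to the swap of tensor factors, with a trace-type map $M \otimes_R M^\vee \to \rmK_R$; I would check that under the canonical identifications this map coincides with $t^M_{\rmK_R}$ (composed with the commutativity isomorphism of $\otimes$), which is surjective by hypothesis. Hence $M^\vee$ is residually faithful.

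Alternatively — and this may be the cleanest route to write down — one can argue at the level of Theorem \ref{1.5.3}: for any parameter ideal $\fkq$, passing to $\overline R = R/\fkq$ we have $\overline{\rmK_R} = \rmK_{\overline R} = \E_{\overline R}(\overline R/\overline\m)$ and $\overline{M^\vee} \cong \Hom_{\overline R}(\overline M, \E_{\overline R}(\overline R/\overline \m))$ is the Matlis dual of $\overline M$ over the Artinian ring $\overline R$. Matlis duality over an Artinian local ring is an exact, faithful, involutive anti-equivalence, and it preserves the annihilator: $\Ann_{\overline R}(\Hom_{\overline R}(\overline M, \E)) = \Ann_{\overline R}(\overline M)$. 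Therefore $\overline M$ faithful over $\overline R$ forces $\overline{M^\vee}$ faithful over $\overline R$, and Proposition \ref{1.5.2} gives the conclusion. I expect to present this version, reducing everything modulo $\fkq$ and invoking Matlis duality over the Artinian quotient.

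The main obstacle — really the only subtle point — is to make sure the base change of the canonical module behaves correctly: one needs $\rmK_R/\fkq\rmK_R \cong \rmK_{R/\fkq}$ (this is standard for a parameter ideal $\fkq$ on a Cohen-Macaulay ring with canonical module) and the compatibility $\Hom_R(M,\rmK_R)/\fkq\Hom_R(M,\rmK_R) \cong \Hom_{R/\fkq}(M/\fkq M, \rmK_{R/\fkq})$, which follows because $M$ is MCM (so $\fkq$ is a regular sequence on $M$) and $\rmK_R$ is MCM, via the usual dimension-shifting/Ext-vanishing argument. Once these identifications are in place, the invariance of the annihilator under Matlis duality over the Artinian ring $\overline R$ is immediate, and the corollary follows at once from Proposition \ref{1.5.2}.
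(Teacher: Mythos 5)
Your proposal is correct, and the route you say you would actually write up is in substance the paper's own argument: the paper proves the corollary by induction on $d=\dim R$, reducing modulo one non-zerodivisor $a$ at a time, using the identification $\overline{M^\vee}\cong \Hom_{\overline{R}}(\overline{M},\rmK_{\overline{R}})=\overline{M}^{\vee}$ together with Proposition \ref{1.5.4}, the Artinian case (where $M^\vee$ is the Matlis dual, and duality preserves annihilators) being the implicit base of the induction. You collapse that induction into a single reduction modulo a full parameter ideal $\fkq$ and make the Matlis-duality endgame explicit via Proposition \ref{1.5.2}; the base-change facts you flag, namely $\rmK_R/\fkq \rmK_R\cong \rmK_{R/\fkq}$ and $\Hom_R(M,\rmK_R)/\fkq \Hom_R(M,\rmK_R)\cong \Hom_{R/\fkq}(M/\fkq M,\rmK_{R/\fkq})$, which hold because $\Ext^i_R(M,\rmK_R)=(0)$ for $i>0$ when $M$ is MCM, are exactly what is needed and are standard, so there is no gap. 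Your first sketched alternative, identifying $t^{M^\vee}_{\rmK_R}$ with $t^{M}_{\rmK_R}$ through the biduality isomorphism $M\cong M^{\vee\vee}$ and then applying the surjectivity criterion from Theorem \ref{1.5.3} and its corollary, is also valid and is genuinely different from the paper's proof: it avoids any reduction modulo parameters at the cost of invoking biduality for MCM modules over a Cohen--Macaulay ring with canonical module; either write-up would be acceptable.
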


\begin{proof}
We may assume that $d >0$ and that our assertion holds true for $d-1$. Let $a\in \m$ be a non-zerodivisor of $R$ and let $\overline{*}$ denote the reduction mod $(a)$. We then have $\overline{M^\vee} \cong \Hom_{\overline{R}}(\overline{M}, \overline{\rmK_R})=\overline{M}^\vee$, where we identify $\overline{\rmK_R} = \rmK_{\overline{R}}$. Because by Proposition \ref{1.5.4} (3), $\overline{M}$ is a residually faithful $\overline{R}$-module, by the hypothesis of induction we have  $\overline{M}^\vee = \Hom_{\overline{R}}(\overline{M}, \rmK_{\overline{R}})$ is a residually faithful $\overline{R}$-module, whence Proposition \ref{1.5.4} (1) shows that $M^\vee$ is a residually faithful $R$-module.  
\end{proof}

Suppose that  $R$ possesses the canonical module $\rmK_R$. Then, certain residually faithful $R$-modules $M$ satisfy the condition $\Hom_R(M,\rmK_R)\otimes_RM \cong \rmK_R$, as we show in the following. Recall that a finitely generated $R$-module $C$ is called {\it semidualizing}, if the natural homomorphism $R \to \Hom_R(C,C)$ is an isomorphism and $\Ext^i_R(C,C) = (0)$ for all $i > 0$. Hence,  the canonical module is semidualizing, and all the semidualizing $R$-modules satisfy the hypothesis in Theorem \ref{3.9}, because semidualizing modules are Cohen-Macaulay.

\begin{thm}\label{3.9}
Suppose that $R$ possesses the canonical module $\rmK_R$ and let $M$ be a MCM $R$-module. If $R\cong \Hom_R(M,M)$ and $\Ext_R^i(M,M) = (0)$ for all $1 \le i \le d$, then the homomorphism $$M^\vee \otimes_RM \overset{t}{\to} \rmK_R$$
is an isomorphism of $R$-modules, where $t = t^M_{\rmK_R}$. 
\end{thm}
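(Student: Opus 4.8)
The plan is to localize and complete to reduce to the case where $R$ is complete with infinite residue field, so that we may choose a non-zerodivisor $a \in \m$ and pass to $\overline{R} = R/(a)$; this should allow an induction on $d = \dim R$, with the base case $d=0$ being the crux. First I would record that the hypotheses $R \cong \Hom_R(M,M)$ and $\Ext^i_R(M,M)=(0)$ for $1 \le i \le d$ say precisely that $M$ is a semidualizing $R$-module (Cohen-Macaulayness of $M$ follows, and the hypotheses descend modulo a non-zerodivisor since $\Ext^1_R(M,M)=0$ gives $\Hom_{\overline R}(\overline M,\overline M) \cong \overline{\Hom_R(M,M)} \cong \overline R$, and the higher Ext vanishing descends by the long exact sequence). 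So by induction the map $t^{\overline M}_{\rmK_{\overline R}}\colon \overline{M}^\vee \otimes_{\overline R} \overline M \to \rmK_{\overline R}$ is an isomorphism; identifying $\rmK_{\overline R} = \overline{\rmK_R}$ and $\overline{M}^\vee \cong \overline{M^\vee}$ (which holds because $M^\vee$ is MCM, so $a$ is $M^\vee$-regular), and using that $a$ is a non-zerodivisor on $M^\vee \otimes_R M$ as well — here one needs $\Tor_1^R(M^\vee, M)$ to have depth considerations controlled, which is where the semidualizing property does real work — Nakayama-type descent (as in the proof of Corollary after Theorem \ref{3.9}'s neighbors) lifts the isomorphism back to $t^M_{\rmK_R}$.

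The base case $d=0$ is where I expect the main obstacle, and the key is the following well-known fact about semidualizing modules over Artinian local rings: if $C$ is semidualizing over an Artinian local ring $R$, then $C^\vee = \Hom_R(C,\rmK_R)$ is again semidualizing, and moreover $C \otimes_R C^\vee \cong \rmK_R$ via the evaluation-type map. I would prove this directly: since $R$ is Artinian (hence $0$-dimensional Cohen-Macaulay) and $\rmK_R = \rmE_R(R/\m)$ is the injective envelope, Matlis duality gives $C^\vee \otimes_R C = \Hom_R(C, \rmK_R) \otimes_R C \cong \Hom_R(\Hom_R(C,C), \rmK_R) = \Hom_R(R,\rmK_R) = \rmK_R$, where the middle isomorphism is the standard Hom-tensor adjunction/evaluation isomorphism $\Hom_R(C,\rmK_R)\otimes_R C \xrightarrow{\sim} \Hom_R(\Hom_R(C,C),\rmK_R)$ valid because $\rmK_R$ is injective and $C$ is finitely generated (this is exactly where injectivity of $\rmK_R$ over the Artinian ring is used). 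One then checks that under this chain of isomorphisms the composite agrees with $t^C_{\rmK_R}$, which is a diagram chase on the definitions of the adjunction and evaluation maps.

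The key steps, in order, are: (1) identify the hypotheses as "$M$ is semidualizing" and note $M^\vee$ is MCM; (2) handle $d=0$ via Matlis duality and the evaluation isomorphism $\Hom_R(M,\rmK_R)\otimes_R M \xrightarrow{\sim}\Hom_R(\Hom_R(M,M),\rmK_R)$, checking this composite is $t^M_{\rmK_R}$; (3) for $d>0$, pick a non-zerodivisor $a \in \m$, verify that reduction mod $(a)$ preserves the semidualizing hypotheses and commutes with $(-)^\vee$ and with $t$, and that $a$ is a non-zerodivisor on $M^\vee\otimes_R M$; (4) apply the inductive hypothesis over $\overline R$ and lift the isomorphism by Nakayama (the cokernel of $t$ is finitely generated and killed modulo $(a)$, while its kernel vanishes by a depth/snake-lemma argument). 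I expect step (2)'s diagram chase and the $\Tor$-vanishing needed in step (3) to require the most care; everything else is routine homological bookkeeping.
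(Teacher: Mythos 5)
Your proposal is correct, and its skeleton coincides with the paper's: induct on $d$, check that the hypotheses, the duals, and the evaluation map all descend modulo a non-zerodivisor $a$, and settle dimension zero by duality into $\rmK_R$. The differences are worth noting. For the base case you invoke the Hom-evaluation isomorphism $\Hom_R(M,\rmK_R)\otimes_RM \xrightarrow{\sim} \Hom_R(\Hom_R(M,M),\rmK_R)$ (valid since $\rmK_R$ is injective when $d=0$ and $M$ is finitely presented) and check the composite is $t^M_{\rmK_R}$; the paper instead dualizes the sequence $0\to X\to M^\vee\otimes_RM\to\rmK_R\to 0$ into $\rmK_R$ and kills $X^\vee$ using $R\cong\Hom_R(M,M)$ and a length count. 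Also, you obtain surjectivity of $t$ inside the same induction by Nakayama, whereas the paper first proves $M$ is residually faithful and quotes its Corollary 3.4 (via Theorem 3.3) to get surjectivity; your route is more self-contained, the paper's reuses its idealization machinery. Two small corrections: first, the hypotheses are not "precisely" that $M$ is semidualizing — only $\Ext_R^i(M,M)=0$ for $1\le i\le d$ is assumed (the paper's remark is that semidualizing modules satisfy the hypothesis, not conversely) — but your argument never uses more than what descends, so this is harmless. Second, the inductive lift does \emph{not} require $a$ to be a non-zerodivisor on $M^\vee\otimes_RM$, nor any control of $\Tor^R_1(M^\vee,M)$: once $\Coker t$ is killed by Nakayama, tensoring $0\to\Ker t\to M^\vee\otimes_RM\to\rmK_R\to 0$ with $R/(a)$ only needs $\Tor_1^R(R/(a),\rmK_R)=0$, which is automatic because $\rmK_R$ is maximal Cohen-Macaulay; trying to establish regularity of $a$ on $M^\vee\otimes_RM$ a priori would be an unjustified (and unnecessary) detour, so drop that clause and keep the snake-lemma/Nakayama step you already describe.
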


\begin{proof}
Notice that $M$ is a residually faithful $R$-module. In fact, the assertion is clear, if $d = 0$. Suppose that $d > 0$ and let $f \in \m$ be a non-zerodivisor of $R$. We set $\overline{R} = R/(f)$ and denote $\overline{*}=\overline{R} \otimes_R*$. Then, since $f$ is regular also for $M$, we have $\Ext_R^i(M,\overline{M}) = \Ext_{\overline{R}}^i(\overline{M}, \overline{M})$ for all $i \in \Bbb Z$, and it is standard to show that $\overline{R} \cong \Hom_{\overline{R}}(\overline{M},\overline{M})$ and that $\Ext_{\overline{R}}^i(\overline{M},\overline{M}) = (0)$ for all $1 \le i \le d-1$.  Therefore, by induction on $d$, we may assume that $\overline{M}$ is a residually faithful $\overline{R}$-module, whence  Proposition \ref{1.5.4} (1) implies that so is the $R$-module $M$.

We now consider the exact sequence
$$(E) \ \ \ 0 \to X \to M^\vee \otimes_RM \overset{t}{\to} \rmK_R \to 0$$
of $R$-modules, where $t = t^M_{\rmK_R}$. If $d = 0$, then because $$\Hom_R(M^\vee \otimes_RM, \rmK_R) = \Hom_R(M, M^{\vee \vee}) = \Hom_R(M,M),$$
taking the $\rmK_R$-dual of $(E)$, we get the exact sequence $$0 \to R \to \Hom_R(M,M) \to X^\vee \to 0.$$ Hence $X^\vee = (0)$ because $R \cong \Hom_R(M,M)$, so that $M^\vee \otimes_RM \overset{t}{\to} \rmK_R$ is an isomorphism. Suppose that $d >0$ and let $f \in \m$ be $R$-regular. We denote $\overline{*} = R/(f)\otimes_R*$. Then since $f$ is $\rmK_R$-regular, we get from Exact sequence $(E)$ 
$$(\overline{E})\ \ \ 0 \to \overline{X} \to \overline{M^\vee \otimes_RM} \overset{\overline{t}}{\to} \overline{\rmK_R} \to 0.$$
Because $\overline{\rmK_R} = \rmK_{\overline{R}}$, $\overline{M^\vee \otimes_RM} = \overline{M}^\vee \otimes_{\overline{R}}\overline {M}$, and $\overline{t} = t^{\rmK_{\overline{R}}}_{\overline{M}}$, by induction on $d$ we see in the above exact sequence $(\overline{E})$ that $\overline{X}= (0)$, whence $X = (0)$ by Nakayama's lemma. Therefore, $M^\vee \otimes_RM \overset{t}{\to} \rmK_R$ is an isomorphism.  
\end{proof}

Therefore, we have the following, which guarantees that the converse of Theorem \ref{3.9} also holds true, if $R_\fkp$ is a Gorenstein ring for every $\fkp \in \Spec R \setminus \{\m\}$. See  \cite[Proposition 2.4]{GT} for details.

\begin{cor}[{\cite[Proposition 2.2]{GT}}] With the same hypothesis of Theorem $\ref{3.9}$, one has $\rmr(R)=\rmr_R(M){\cdot}\mu_R(M)$. Consequently, the following assertions hold true.
\begin{enumerate}[{\rm (1)}]
\item If $\rmr(R)$ is a prime number, then $M\cong R$ or $M \cong \rmK_R$.
\item If $R$  is a Gorenstein ring, then $M \cong R$.
\end{enumerate}
\end{cor}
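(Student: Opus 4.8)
The plan is to read the formula directly off the isomorphism produced by Theorem \ref{3.9}, and then to deduce (1) and (2) by an elementary divisibility argument combined with faithfulness considerations.

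First I would invoke Theorem \ref{3.9}: under the stated hypothesis the trace map $t = t^M_{\rmK_R}$ gives an isomorphism $M^\vee \otimes_R M \cong \rmK_R$ of $R$-modules. Applying $- \otimes_R R/\m$ and counting dimensions over the residue field, the multiplicativity of the minimal number of generators under tensor products over a local ring yields
\[
\mu_R(\rmK_R) \;=\; \mu_R(M^\vee \otimes_R M) \;=\; \mu_R(M^\vee)\cdot \mu_R(M).
\]
Next I would recall the two standard identifications available for a Cohen-Macaulay local ring admitting a canonical module: namely $\mu_R(\rmK_R) = \rmr(R)$, and, for a MCM module $M$, $\mu_R(M^\vee) = \rmr_R(M)$. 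Both follow by passing to an Artinian reduction $\overline{R} = R/\fkq$, where $\rmK_R/\fkq\rmK_R = \rmK_{\overline R}$ and $M^\vee/\fkq M^\vee \cong \Hom_{\overline R}(M/\fkq M,\rmK_{\overline R})$, and using that Matlis duality over $\overline R$ interchanges socle and top. Substituting into the displayed equation gives $\rmr(R) = \rmr_R(M)\cdot \mu_R(M)$, which is the asserted formula.

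For assertion (1): if $\rmr(R)$ is a prime number, then the equation $\rmr(R) = \rmr_R(M)\cdot\mu_R(M)$ forces $\mu_R(M) = 1$ or $\rmr_R(M) = 1$. In the first case $M$ is cyclic, say $M = R/\Ann_R M$; but $R \cong \Hom_R(M,M)$ forces $M$ to be a faithful $R$-module, whence $\Ann_R M = (0)$ and $M \cong R$. In the second case $\mu_R(M^\vee) = \rmr_R(M) = 1$, so $M^\vee$ is cyclic; moreover $M$ is residually faithful (as already recorded in the proof of Theorem \ref{3.9}), hence so is $M^\vee$ by the corollary preceding Theorem \ref{3.9}, and therefore $M^\vee$ is a faithful $R$-module by Proposition \ref{1.5.4}(3). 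A cyclic faithful module over a local ring is free of rank one, so $M^\vee \cong R$, and dualizing into $\rmK_R$ gives $M \cong M^{\vee\vee} \cong \rmK_R$. For assertion (2): if $R$ is Gorenstein then $\rmr(R) = 1$, so $\mu_R(M) = 1$, and the argument just given shows $M \cong R$ (here the two alternatives of (1) coincide since $\rmK_R \cong R$).

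I expect no serious obstacle; the one point that genuinely needs care is the branch $\rmr_R(M) = 1$ in (1), where what is required is the faithfulness of $M^\vee$ rather than merely of $M$, and this is supplied precisely by the corollary asserting that $M^\vee$ is residually faithful whenever $M$ is.
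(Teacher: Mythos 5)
Your proof is correct and follows exactly the route the paper intends: the corollary is stated as an immediate consequence of Theorem \ref{3.9} (the paper itself defers the proof to \cite[Proposition 2.2]{GT}), and your computation $\rmr(R)=\mu_R(\rmK_R)=\mu_R(M^\vee\otimes_RM)=\mu_R(M^\vee){\cdot}\mu_R(M)=\rmr_R(M){\cdot}\mu_R(M)$ is the intended argument, with the standard identities $\mu_R(\rmK_R)=\rmr(R)$ and $\mu_R(M^\vee)=\rmr_R(M)$ justified as in \cite[Satz 6.10]{HK}. Your treatment of the two branches in (1) is also sound, and in particular the delicate case $\rmr_R(M)=1$ is correctly settled by invoking the residual faithfulness of $M$ (established in the proof of Theorem \ref{3.9}), the corollary that $M^\vee$ is then residually faithful, and Proposition \ref{1.5.4}(3), so that the cyclic module $M^\vee$ is faithful, hence free, giving $M\cong M^{\vee\vee}\cong\rmK_R$.
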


Let us note the following.

\begin{prop}
Suppose that $R$ is an integral domain, possessing the canonical module $\rmK_R$. Let $M$ be a MCM $R$-module and assume that $\rmr(R \ltimes M) = 2$. If $\Ext_R^i(M,M)=(0)$ for all $1 \le i \le d$, then $$M \cong \rmK_R^{\oplus 2}\ \ \text{or}\ \ M^\vee \otimes_RM \cong \rmK_R.$$ Therefore, if $\rmr(R)$ is a prime number and $M$ is indecomposable, then $\rmr(R)=2$ and $M \cong R$.
\end{prop}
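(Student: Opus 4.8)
The plan is to combine Theorem~\ref{pro1}(1) with the dimension-one type formula for $R\ltimes M$ from Theorem~\ref{1.5.3} applied to the module $M$, whose type over a domain forces a strong structural dichotomy. First I would observe that since $\rmr(R\ltimes M)=2$, in particular $\rmr(R\ltimes M)\ge\rmr_R(M)$ gives $\rmr_R(M)\le 2$, and Theorem~\ref{1.5.3} gives $\rmr(R\ltimes M)=\rmr_R(M)+\mu_R(\Coker t^M_{\rmK_R})$. Hence there are exactly two cases: either $\mu_R(\Coker t^M_{\rmK_R})=0$, i.e.\ $t^M_{\rmK_R}$ is surjective and $\rmr_R(M)=2$; or $\mu_R(\Coker t^M_{\rmK_R})=1$ and $\rmr_R(M)=1$.

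In the first case, surjectivity of $t^M_{\rmK_R}$ together with the hypothesis $\Ext^i_R(M,M)=(0)$ for $1\le i\le d$ should force $M^\vee\otimes_RM\cong\rmK_R$ outright; I would aim to show that the kernel $X$ in $(E):0\to X\to M^\vee\otimes_RM\to\rmK_R\to 0$ vanishes by the same dévissage used in the proof of Theorem~\ref{3.9}. The delicate point is that Theorem~\ref{3.9} assumes $R\cong\Hom_R(M,M)$, whereas here we only know $M$ is residually faithful (via Corollary~\ref{1.4}, since $\rmr(R\ltimes M)=2$ does \emph{not} immediately give residual faithfulness --- but $t^M_{\rmK_R}$ \emph{is} surjective in this case, which \emph{is} residual faithfulness by Corollary~\ref{1.4}). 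I expect one still needs an argument that the natural map $R\to\Hom_R(M,M)$ is an isomorphism; over a domain this follows because $M$ is torsion-free of positive rank, so $\Hom_R(M,M)$ embeds in a matrix ring over $\rmQ(R)$, and the vanishing of $\Ext$ plus the rank count $\rmr_R(M)\cdot\mu_R(M)\le\text{something}$ pins it down. This is where I expect the main obstacle to lie, and I would be prepared to invoke the argument of \cite[Proposition 2.2]{GT} cited in the preceding corollary.

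In the second case $\rmr_R(M)=1$, so $M$ is a \emph{Gorenstein module} over the domain $R$, hence (being MCM of rank one, since $R$ is a domain so $M$ is torsion-free) $M$ is isomorphic to an ideal that is a canonical module after twisting; more precisely a Gorenstein $R$-module of rank $\rho$ is $\cong\rmK_R^{\oplus\rho}$, and since $M$ is indecomposable-free part considerations over a domain give $\rho=2$ is forced by $\mu_R(\Coker t^M_{\rmK_R})=1$? Here I would instead argue directly: $\rmr_R(M)=1$ means $M$ is a Gorenstein module, so $M\cong\rmK_R^{\oplus\rho}$ for some $\rho\ge 1$ (standard structure theorem for Gorenstein modules over a Cohen--Macaulay local ring admitting a canonical module), and then $M^\vee\otimes_RM\cong(\rmK_R^\vee\otimes_RM)^{\oplus\rho}\cong M^{\oplus\rho}\cong\rmK_R^{\oplus\rho^2}$ while $\Coker t^M_{\rmK_R}$ has minimal number of generators $\rho^2-$(rank of image)$\,=\rho^2-1$; setting this equal to $1$ gives $\rho^2=2$, impossible, unless I have miscounted --- so I would recompute: $\rmr(R\ltimes\rmK_R^{\oplus\rho})=\rho\cdot\rmr(R\ltimes\rmK_R)$? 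No. Rather, $\rmr(R\ltimes M)=\rmr_R(M)+\mu_R(\Coker t^M_{\rmK_R})$ with $M=\rmK_R^{\oplus\rho}$ gives $\rmr_R(M)=\rho$... contradicting $\rmr_R(M)=1$ unless $\rho=1$, i.e.\ $M\cong\rmK_R$; but then $t^M_{\rmK_R}$ is an isomorphism, $\mu_R(\Coker)=0$, and $\rmr(R\ltimes M)=1\ne 2$. So the second case is \emph{vacuous}, and only $M\cong\rmK_R^{\oplus 2}$ or $M^\vee\otimes_RM\cong\rmK_R$ survives. I would double-check the Gorenstein-module structure theorem hypotheses (it needs $R$ complete or $R$ with canonical module --- the latter is assumed) and present the argument cleanly.

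For the final sentence: if $\rmr(R)$ is prime and $M$ is indecomposable, then in the first case $\rmr(R)=\rmr_R(M)\cdot\mu_R(M)=2\mu_R(M)$ by the cited Corollary~\cite[Proposition 2.2]{GT}, forcing $\rmr(R)=2$ (as it is prime) and $\mu_R(M)=1$, i.e.\ $M\cong R$; in the surviving $M\cong\rmK_R^{\oplus 2}$ subcase $M$ is decomposable, contradiction. Hence $\rmr(R)=2$ and $M\cong R$, as claimed. I expect this last part to be routine once the dichotomy is established; the genuine work is verifying $R\cong\Hom_R(M,M)$ in the first case so that Theorem~\ref{3.9} (or its proof) applies.
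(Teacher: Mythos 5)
Your reduction via Theorem \ref{1.5.3} to the two cases ($\rmr_R(M)=2$ with $t^M_{\rmK_R}$ surjective, or $\rmr_R(M)=1$ with $\mu_R(C)=1$), your conclusion that the second case is vacuous, and your sketch of the final assertion all agree in substance with the paper. (For the vacuity of the second case the paper argues more directly: $M^\vee$ is cyclic and maximal Cohen--Macaulay over the domain $R$, hence $M^\vee\cong R$ and $M\cong\rmK_R$, giving $\rmr(R\ltimes M)=1$, a contradiction; you do not need, and should not appeal to, a structure theorem for Gorenstein modules, since type one alone does not make a module Gorenstein without exactly this kind of argument.) The genuine gap is in the first case, which is where the content of the proposition lies. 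You assert that surjectivity of $t^M_{\rmK_R}$ together with the $\Ext$-vanishing ``should force $M^\vee\otimes_RM\cong\rmK_R$ outright,'' and that over a domain one can show $R\to\Hom_R(M,M)$ is an isomorphism by a rank count. This cannot work as stated: $M=\rmK_R^{\oplus 2}$ satisfies every hypothesis of this case ($M$ is residually faithful, $\rmr_R(M)=2$, $\Ext_R^i(M,M)=(0)$, so $\rmr(R\ltimes M)=2$), yet $\Hom_R(M,M)$ is a $2\times 2$ matrix algebra over $R$ and $M^\vee\otimes_RM\cong\rmK_R^{\oplus 4}\not\cong\rmK_R$. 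This is precisely why the conclusion is a dichotomy, and your plan contains no mechanism to decide which horn occurs.

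The missing idea is a rank analysis on $M^\vee$. Since $\mu_R(M^\vee)=\rmr_R(M)=2$, take a presentation $0\to X\to R^{\oplus 2}\to M^\vee\to 0$. If $X=(0)$, then $M\cong M^{\vee\vee}\cong\rmK_R^{\oplus 2}$, the first horn. If $X\neq(0)$, then $X$ and $X^\vee$ are nonzero MCM modules, and applying $(-)^\vee$ gives $0\to M\to\rmK_R^{\oplus 2}\to X^\vee\to 0$; tensoring with $F=\rmQ(R)$ shows $\rank_RM=1$. Hence in the exact sequence $0\to L\to M^\vee\otimes_RM\overset{t}{\to}\rmK_R\to 0$ (right-exactness being residual faithfulness, i.e.\ Corollary \ref{1.4}) the kernel $L$ is torsion, so applying $(-)^\vee$ yields an isomorphism $R=\rmK_R^\vee\to\left(M^\vee\otimes_RM\right)^\vee\cong\Hom_R(M,M)$. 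Only at this point are the hypotheses of Theorem \ref{3.9} verified, and it then gives $M^\vee\otimes_RM\cong\rmK_R$, the second horn. This step is also what legitimizes your use of $\rmr(R)=\rmr_R(M){\cdot}\mu_R(M)$ in the last paragraph, which otherwise has no justification for an indecomposable $M$. Without this argument the proposal does not prove the statement.
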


\begin{proof}
Let $C = \operatorname{Coker}t^M_{\rmK_R}$. Then,  $\rmr_R(M)= \mu_R(C)=1$, or $\rmr_R(M)= 2$ and $C=(0)$, since $\rmr(R \ltimes M) = \rmr_R(M) +\mu_R(C)$ by Theorem \ref{1.5.3}. If $\rmr_R(M)=1$, then $M^\vee \cong R$, since the cyclic module $M^\vee$ is of dimension $d$ and $R$ is an integral domain. Therefore, $M \cong \rmK_R$, so that $\rmr(R \ltimes M) = 1$, which is impossible. Hence, $\rmr_R(M) = 2$, and $M$ is, by Proposition \ref{1.5.2}, a residually faithful $R$-module. Let us take a presentation
$$0 \to X \to R^{\oplus 2} \to M^\vee \to 0$$
of $M^\vee$. If $X = (0)$, then $M \cong \rmK_R^{\oplus 2}$. Suppose that $X \ne (0)$. Then, $X$ is a MCM $R$-module, and taking the $\rmK_R$-dual of the presentation, we get the exact sequence
$$0 \to M \to \rmK_R^{\oplus 2} \to X^\vee \to 0.$$
Let $F = \rmQ(R)$. Then $F \otimes_RX^\vee \ne (0)$, since $X^\vee$ is a MCM $R$-module. Consequently, $F \otimes_RM \cong F$, that is $\rank_RM=1$, because $F \otimes_R\rmK_R \cong F$. Hence, in the canonical exact sequence
$$(E)\ \ \ 0 \to L \to M^\vee \otimes_RM \overset{t}{\to} \rmK_R \to 0,$$
 $F \otimes_RL=(0)$, because $\rank_RM=1$. Consequently, because the $R$-module $L$ is torsion, taking the $\rmK_R$-dual of the sequence $(E)$ we get the isomorphism
$$R = \rmK_R^\vee \to [M^\vee \otimes M]^\vee = \Hom_R(M,M).$$ Thus, $M^\vee \otimes_RM \cong \rmK_R$ by Theorem \ref{3.9}.

If $M$ is indecomposable and $\rmr(R)$ is a prime number, we then have $M \cong R$ or $M \cong \rmK_R$, while $\rmr(R\ltimes M) = 2$, so that $M \cong R$ and $\rmr(R) = 2$.
\end{proof}

%%%%%%%%%%%%%%%%%%
The following result is essentially due to \cite[Lemma 3.1]{T} (see also \cite[Proof of Lemma 2.2]{K}). We include a brief proof for the sake of completeness.
%%%%%%%%%%%%%%%%%%

\begin{lemma}\label{1.4.1}
Let $M$ be a MCM $R$-module and assume that there is an embedding 
$$(E)\ \ \ 0 \to M \to F \to N \to 0$$
of $M$ into a finitely generated free $R$-module $F$ such that $N$ is a MCM $R$-module.  Then the following conditions are equivalent.
\begin{enumerate}[{\rm (1)}]
\item $M$ is a residually faithful $R$-module.
\item $M \not\subseteq \m F$.
\item $R$ is a direct summand of $M$.
\end{enumerate}
\end{lemma}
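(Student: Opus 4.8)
The plan is to prove the chain of implications $(3)\Rightarrow(1)\Rightarrow(2)\Rightarrow(3)$, using the short exact sequence $(E)$ together with Proposition~\ref{1.5.2} to reduce each assertion to the Artinian case by going modulo a parameter ideal.

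First I would observe that $(3)\Rightarrow(1)$ is immediate: if $R$ is a direct summand of $M$, then $M$ is a faithful $R$-module, and more is true — for every parameter ideal $\fkq$ of $R$, $R/\fkq$ is a direct summand of $M/\fkq M$, hence $M/\fkq M$ is a faithful $R/\fkq$-module, so $M$ is residually faithful by definition (and Proposition~\ref{1.5.2}). Next, for $(1)\Rightarrow(2)$, I would argue by contraposition: suppose $M \subseteq \m F$. Pick a parameter ideal $\fkq$ of $R$. Since $N$ is MCM, the sequence $(E)$ stays exact modulo $\fkq$, giving $0 \to M/\fkq M \to F/\fkq F \to N/\fkq N \to 0$, and the image of $M/\fkq M$ lies in $\m(F/\fkq F) = (\m/\fkq)(F/\fkq F)$. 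Over the Artinian local ring $\overline R = R/\fkq$ with maximal ideal $\overline\m$, an element of $\overline\m$ killing the socle — concretely, any nonzero $s \in (0):_{\overline R}\overline\m$ — annihilates $\overline\m(F/\fkq F)$, hence annihilates $M/\fkq M$; thus $s \in \Ann_{\overline R}(M/\fkq M)$ is nonzero, so $M/\fkq M$ is not faithful over $\overline R$, and $M$ is not residually faithful.

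For the remaining implication $(2)\Rightarrow(3)$, suppose $M \not\subseteq \m F$. Then some element of a minimal generating set of $M$ is part of a basis of $F$, equivalently the composite $M \hookrightarrow F \twoheadrightarrow R$ onto a suitable free summand of $F$ is surjective; since $R$ is free this splitting epimorphism gives $R$ as a direct summand of $M$. Concretely: choosing a basis $e_1,\dots,e_n$ of $F$, write a minimal generator $x$ of $M$ as $x = \sum a_i e_i$ with some $a_i \notin \m$, hence a unit; after a change of basis we may assume $x = e_1$, so the projection $F \to Re_1 \cong R$ restricts to a surjection $M \to R$ splitting the inclusion $Re_1 \hookrightarrow M$. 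This step needs only linear algebra over the local ring $R$ and the fact that $M \not\subseteq \m F$ forces a minimal generator of $M$ to have a unit coordinate.

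The main obstacle — really the only place requiring care — is the Artinian reduction in $(1)\Rightarrow(2)$: one must check that $(E)$ remains exact after tensoring with $R/\fkq$, which is exactly where the hypothesis that $N$ is MCM enters (it makes $\Tor_1^R(N,R/\fkq)=0$ since $\fkq$ is generated by an $N$-regular sequence), and then that the inclusion $M/\fkq M \subseteq \m(F/\fkq F)$ is preserved. Everything else is routine. I expect the whole proof to be short, the heart of it being the elementary observation that a submodule of $\m F$ is annihilated by the socle of the Artinian quotient ring, contradicting faithfulness.
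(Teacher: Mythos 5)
Your proposal is correct and follows essentially the same route as the paper: the paper also treats $(3)\Rightarrow(1)$ and $(2)\Rightarrow(3)$ as immediate and proves $(1)\Rightarrow(2)$ by reducing modulo a parameter ideal $\fkq$ (exactness being preserved since $N$ is MCM) and using that $\Ann_{R/\fkq}\left(\m{\cdot}(F/\fkq F)\right)\neq(0)$, which is precisely your socle argument stated contrapositively. No gaps; your explicit spelling out of the splitting in $(2)\Rightarrow(3)$ and the appeal to Proposition \ref{1.5.2} for the ``some versus every parameter ideal'' point are exactly what the paper leaves implicit.
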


\begin{proof}
(3) $\Rightarrow$ (1) and (2) $\Rightarrow$ (3) These are clear.

(1) $\Rightarrow$ (2) Let $\fkq$ be a parameter ideal of $R$. Then, since $N$ is a MCM $R$-module, Embedding (E) gives rise to the exact sequence
$$0 \to M/\fkq M \to F/\fkq F \to N/\fkq N \to 0.$$
Notice that $\Ann_{R/\fkq}\m{\cdot}(F/\fkq F) \ne (0)$ because $\dim R/\fkq = 0$, and we have $M/\fkq M \not \subseteq \m{\cdot}(F/\fkq F)$. Thus $M \not\subseteq \m F$.
\end{proof}

Let $\operatorname{\Omega CM}(R)$ denote the class of MCM $R$-modules $M$ such that there is an embedding $0 \to M \to F \to N \to 0$ of $M$ into a finitely generated free $R$-module with $N$ a MCM $R$-module. With this notation, we have the following.

\begin{thm}\label{1.4.2}
Let $M \in \operatorname{\Omega CM}(R)$. Then
$$
\rmr(R \ltimes M) = \begin{cases} 
\rmr_R(M)  &\text{if $R$ is a direct summand of $M$}, \\
\rmr(R) + \rmr_R(M) &\text{otherwise.}
\end{cases}
$$
\end{thm}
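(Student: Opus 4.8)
The plan is to reduce Theorem \ref{1.4.2} to the dichotomy already established in Theorem \ref{pro1} together with the characterization of residual faithfulness for syzygy modules in Lemma \ref{1.4.1}. Recall that Theorem \ref{pro1} gives, for any MCM $R$-module $M$, exactly the two bounds $\rmr_R(M)\le\rmr(R\ltimes M)\le\rmr(R)+\rmr_R(M)$, with equality on the left precisely when $M$ is residually faithful (Proposition \ref{1.5.2}) and equality on the right precisely when $(\fkq:_R\m)M=\fkq M$ for a parameter ideal $\fkq$. So the heart of the matter is to show that, for $M\in\operatorname{\Omega CM}(R)$, one of these two extremes always holds: either $M$ is residually faithful (equivalently, by Lemma \ref{1.4.1}, $R$ is a direct summand of $M$), or else $(\fkq:_R\m)M=\fkq M$.

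First I would fix the data: an embedding $(E)\colon 0\to M\to F\to N\to 0$ with $F$ finitely generated free and $N$ MCM, and a parameter ideal $\fkq$ of $R$. Tensoring $(E)$ down mod $\fkq$ stays exact since $N$ is MCM, giving $0\to M/\fkq M\to F/\fkq F\to N/\fkq N\to 0$. Assume $R$ is \emph{not} a direct summand of $M$; then by Lemma \ref{1.4.1} we have $M\subseteq\m F$, hence $M/\fkq M\subseteq \m\cdot(F/\fkq F)$. I now want to deduce $(\fkq:_R\m)M\subseteq\fkq M$. Working in $\overline R=R/\fkq$ and writing $\overline M=M/\fkq M$, the claim is $((0):_{\overline R}\overline\m)\cdot\overline M=(0)$. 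Pick $a\in\fkq:_R\m$, so $\overline a$ annihilates $\overline\m$ in $\overline R$. Since $\overline M\subseteq\overline\m\cdot(F/\fkq F)=\overline\m\,\overline F$, every element of $\overline M$ is an $\overline R$-combination of elements of $\overline\m\,\overline F$, and $\overline a$ kills $\overline\m\,\overline F$ coordinatewise (as $\overline a\,\overline\m=(0)$ inside $\overline R$ and $\overline F$ is free over $\overline R$). Hence $\overline a\,\overline M=(0)$, i.e. $aM\subseteq\fkq M$. As $a$ was arbitrary, $(\fkq:_R\m)M=\fkq M$ (the reverse inclusion being trivial), and Theorem \ref{pro1}(2) yields $\rmr(R\ltimes M)=\rmr(R)+\rmr_R(M)$.

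For the complementary case, suppose $R$ \emph{is} a direct summand of $M$. Then Lemma \ref{1.4.1} gives that $M$ is residually faithful, so Theorem \ref{pro1}(1) (equivalently Proposition \ref{1.5.2}) gives $\rmr(R\ltimes M)=\rmr_R(M)$. Combining the two cases produces exactly the displayed formula. One should note the two cases are genuinely exhaustive and mutually exclusive once $M$ is a nonzero module in $\operatorname{\Omega CM}(R)$: "$R$ a direct summand of $M$" versus "$M\subseteq\m F$" are exactly negations of each other by the equivalence (2)$\Leftrightarrow$(3) in Lemma \ref{1.4.1}, so there is no overlap and no gap. (If $M=0$ the statement is vacuous or trivial, so we may assume $M\ne 0$.)

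The step I expect to require the most care is the verification that $M\subseteq\m F$ forces $(\fkq:_R\m)M=\fkq M$ — specifically, the passage from "$\overline M$ lies in $\overline\m\,\overline F$" to "$\overline a$ annihilates $\overline M$". The point to get right is that $\overline F$ is \emph{free} over $\overline R$, so the socle-type annihilator $(0):_{\overline R}\overline\m$ acts coordinatewise and annihilates $\overline\m\,\overline F$; then any submodule of $\overline\m\,\overline F$ is annihilated as well. This is elementary but it is where the hypothesis that $F$ be free (not merely MCM) is essential, and it is exactly the mirror image of the argument already used at the end of the proof of Lemma \ref{1.4.1}. Everything else is a direct appeal to Theorem \ref{pro1} and Lemma \ref{1.4.1}.
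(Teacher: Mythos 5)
Your proposal is correct and follows essentially the same route as the paper: split via Lemma \ref{1.4.1} into the summand case (residually faithful, so Theorem \ref{pro1}(1) applies) and the case $M\subseteq\m F$, where reducing the embedding mod $\fkq$ and using $(\fkq:_R\m)\m\subseteq\fkq$ gives $(\fkq:_R\m)M=\fkq M$ and hence Theorem \ref{pro1}(2). The only cosmetic difference is that your coordinatewise-freeness remark is not needed — $(\fkq:_R\m)\m\subseteq\fkq$ already kills $\m\cdot(F/\fkq F)$ for any module $F$ — which is exactly how the paper phrases it.
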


\begin{proof}
We may assume that $R$ is not a direct summand of $M$. Let us choose an embedding 
$$0 \to M \to F \to N \to 0$$
of $M$ into a finitely generated free $R$-module $F$ such that $N$ is a MCM $R$-module. Let $\fkq$ be a parameter ideal of $R$ and set $I = \fkq:_R\m$. Then, since $M \subseteq \m F$ by Lemma \ref{1.4.1}, we have from the exact sequence 
$$0 \to M/\fkq M \to F/\fkq F \to N/\fkq N \to 0$$
that $I{\cdot}(M/\fkq M )\subseteq (I\m){\cdot}(F/\fkq F) = (0)$. Therefore,  $IM \subseteq \fkq M$, so that $\rmr(R \ltimes M) = \rmr(R) + \rmr_R(M)$ by Theorem \ref{pro1} (2).
\end{proof}

If $R$ is a Gorenstein ring, every MCM $R$-module $M$ belongs to $\operatorname{\Omega CM}(R)$, so that Theorem \ref{1.4.2} yields the following.

\begin{cor}\label{gor}
Let $R$ be a Gorenstein ring and $M$ a MCM $R$-module.  Then the following conditions are equivalent.
\begin{enumerate}[{\rm (1)}]
\item $\rmr(R \ltimes M) =\rmr_R(M)$.
\item $R$ is a direct summand of $M$.
\end{enumerate}
\end{cor}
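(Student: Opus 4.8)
The plan is to derive Corollary~\ref{gor} as a direct specialization of Theorem~\ref{1.4.2}. The first observation is that when $R$ is a Gorenstein ring, every MCM $R$-module $M$ lies in $\operatorname{\Omega CM}(R)$: indeed, choosing any presentation $R^{\oplus b} \xrightarrow{\varphi} R^{\oplus a} \to M^\vee \to 0$ of the MCM module $M^\vee$ (here $M^\vee = \Hom_R(M,R)$ since $\rmK_R = R$) and applying $\Hom_R(-,R)$, we obtain an exact sequence $0 \to M \cong M^{\vee\vee} \to R^{\oplus a} \to N \to 0$ in which $N = \operatorname{Coker}\varphi^\vee$ satisfies $\depth_R N \geq d$ by the standard depth count (one uses that $\Ext^1_R(M^\vee, R) = 0$ because $M^\vee$ is MCM over the Gorenstein ring $R$), so $N$ is a MCM $R$-module. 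Hence $M \in \operatorname{\Omega CM}(R)$.

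Next I would simply read off the equivalence. By Theorem~\ref{1.4.2}, $\rmr(R \ltimes M) = \rmr_R(M)$ holds precisely when $R$ is a direct summand of $M$, and otherwise $\rmr(R \ltimes M) = \rmr(R) + \rmr_R(M) = 1 + \rmr_R(M) > \rmr_R(M)$ since $\rmr(R) = 1$ for a Gorenstein ring. This dichotomy forces: $\rmr(R \ltimes M) = \rmr_R(M)$ if and only if $R$ is a direct summand of $M$. That is exactly the asserted equivalence of (1) and (2).

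There is essentially no obstacle here; the only point that requires a moment's care is the verification that $N$ in the embedding is genuinely MCM, i.e. that one can choose the embedding $0 \to M \to F \to N \to 0$ with $F$ free and $N$ MCM. Over a Gorenstein local ring this is the classical fact that MCM modules are reflexive and that the dual of a presentation of $M^\vee$ produces such a resolution; alternatively, one can take a $d$-fold cosyzygy. Since Theorem~\ref{1.4.2} has already done the substantive work, the proof of the corollary is a two-line deduction once membership in $\operatorname{\Omega CM}(R)$ is noted.

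\begin{proof}
Since $R$ is Gorenstein, $\rmK_R = R$ and every MCM $R$-module is reflexive. Given a MCM $R$-module $M$, take a finite presentation $R^{\oplus b} \xrightarrow{\varphi} R^{\oplus a} \to M^\vee \to 0$ of the MCM module $M^\vee = \Hom_R(M, R)$. Applying $\Hom_R(-, R)$ and using $\Ext_R^1(M^\vee, R) = 0$ (as $M^\vee$ is MCM over the Gorenstein ring $R$), we obtain an exact sequence
$$
0 \to M \to R^{\oplus a} \to N \to 0
$$
with $M \cong M^{\vee\vee}$ and $N = \Coker \varphi^\vee$; a depth chase along this sequence and the presentation of $M^\vee$ shows $\depth_R N = d$, so $N$ is MCM. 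Hence $M \in \operatorname{\Omega CM}(R)$, and Theorem~\ref{1.4.2} applies. If $R$ is a direct summand of $M$, then $\rmr(R \ltimes M) = \rmr_R(M)$. Otherwise $\rmr(R \ltimes M) = \rmr(R) + \rmr_R(M) = 1 + \rmr_R(M) \ne \rmr_R(M)$, because $\rmr(R) = 1$. This proves the equivalence of (1) and (2).
\end{proof}
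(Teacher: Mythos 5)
Your proposal is correct and takes essentially the same route as the paper, which dispatches the corollary with the single observation that over a Gorenstein ring every MCM $R$-module belongs to $\operatorname{\Omega CM}(R)$ and then applies Theorem \ref{1.4.2}, the two cases being distinguished because $\rmr(R)=1\ne 0$. One minor slip: the cokernel of the embedding $0\to M\to R^{\oplus a}$ obtained by dualizing the presentation is $\operatorname{Im}\varphi^{\vee}\cong\Hom_R(\Omega_R^1(M^{\vee}),R)$ (which is indeed MCM, being the $R$-dual of the MCM module $\Omega_R^1(M^{\vee})$ over the Gorenstein ring $R$), not $\Coker\varphi^{\vee}$; this does not affect the argument.
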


\section{Ulrich modules and the case where $\rmr(R \ltimes M) = \rmr(R) + \rmr_R(M)$}
Let $(R,\m)$ be a Cohen-Macaulay local ring of dimension $d \ge 0$. In this section, we study the other extremal case of Theorem \ref{pro1} (2), that is $\rmr(R\ltimes M)=\rmr(R)+\rmr_R(M)$. We already have a partial answer by Theorem \ref{1.4.2}, and the following also shows that over a non-regular Cohen-Macaulay local ring $(R,\m, k)$, there are plenty of MCM $R$-modules $M$ such that $\rmr(R \ltimes M) = \rmr(R) + \rmr_R(M)$.

Let $\Omega_R^i(k)$ denote, for each $i \ge 0$, the $i$-th syzygy module of the simple $R$-module $k=R/\m$ in its minimal free resolution. Notice that, thanks to Theorem \ref{1.4.2}, the crucial case in Theorem \ref{1.5.1} is actually the case where $i = d$.

\begin{thm}\label{1.5.1}
Suppose that $R$ is not a regular local ring. Then
$(\fkq:_R\m){\cdot}\Omega^i_R(k) = \fkq{\cdot} \Omega_R^i(k)$ for every $i \ge d$ and for every parameter ideal $\fkq$ of $R$. Therefore
$$\rmr(R \ltimes \Omega_R^i(k)) = \rmr(R) + \rmr_R(\Omega_R^i(k))$$ for all  $i \ge d$.
\end{thm}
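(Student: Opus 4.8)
The plan is to reduce the displayed formula to a statement about ideals and then split into two cases. By Theorem~\ref{pro1}~(2), and since $\Omega_R^i(k)$ is a maximal Cohen-Macaulay $R$-module for every $i\ge d$, the equality $\rmr(R\ltimes\Omega_R^i(k))=\rmr(R)+\rmr_R(\Omega_R^i(k))$ is equivalent to $(\fkq:_R\m)\Omega_R^i(k)=\fkq\Omega_R^i(k)$ for one, hence every, parameter ideal $\fkq$ of $R$; so it is enough to prove the latter. For $i>d$ this follows directly from Theorem~\ref{1.4.2}: from the minimal free resolution $F_\bullet$ of $k$ we get the exact sequence $0\to\Omega_R^i(k)\to F_{i-1}\to\Omega_R^{i-1}(k)\to0$ in which $\Omega_R^{i-1}(k)$ is maximal Cohen-Macaulay (because $i-1\ge d$), so $\Omega_R^i(k)\in\operatorname{\Omega CM}(R)$; by minimality $\Omega_R^i(k)\subseteq\m F_{i-1}$, and $\Omega_R^i(k)\neq0$ since $R$ is not regular, so $R$ is not a direct summand of $\Omega_R^i(k)$ by Lemma~\ref{1.4.1} and Theorem~\ref{1.4.2} gives the conclusion. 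Thus only the crucial case $i=d$ remains.

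I would handle $i=d$ by induction on $d=\dim R$. For $d=0$ the ring $R$ is Artinian and not a field, $\fkq=(0)$, $\Omega_R^0(k)=k$, and the claim $\bigl((0):_R\m\bigr)\cdot k=0$ holds because $(0):_R\m\subseteq\m$. Let $d\ge1$. The equality to be proved is preserved and reflected by the faithfully flat local base change $R\to R(X)=R[X]_{\m R[X]}$, which is compatible with $\rmr(-)$, $\rmr_R(-)$, idealizations and parameter ideals and which turns $F_\bullet$ into the minimal free resolution of the residue field of $R(X)$; so I may assume $k$ is infinite. By prime avoidance in the $k$-vector space $\m/\m^2$ (the image there of each of the finitely many associated primes of $R$ being a proper subspace) I may then pick a nonzerodivisor $a\in\m\setminus\m^2$. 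Set $\overline R=R/(a)$, a Cohen-Macaulay local ring of dimension $d-1$ which is again not regular.

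The homological heart of the argument is the isomorphism of $\overline R$-modules
$$\Omega_R^d(k)/a\,\Omega_R^d(k)\ \cong\ \Omega_{\overline R}^d(k)\oplus\Omega_{\overline R}^{d-1}(k),$$
which I would prove by a separate induction on the syzygy index: the base case $\Omega_R^1(k)/a\,\Omega_R^1(k)=\m/a\m\cong\overline\m\oplus k$ holds because the sequence $0\to k\to\m/a\m\to\overline\m\to0$, whose submodule $k$ is $(a)/a\m$, splits (the image of $a$ being a minimal generator of $\m/a\m$ killed by $\m$, which is where $a\notin\m^2$ is used), and the inductive step follows by tensoring the syzygy sequences $0\to\Omega_R^{j+1}(k)\to F_j\to\Omega_R^j(k)\to0$ with $\overline R$ (exact for $j\ge1$, since $\Tor_1^R(\Omega_R^j(k),\overline R)\cong\Tor_{j+1}^R(k,\overline R)=0$ as $\pd_R\overline R=1$) and using uniqueness of minimal $\overline R$-presentations. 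Granting this, put $N=\Omega_R^d(k)/a\,\Omega_R^d(k)$, a maximal Cohen-Macaulay $\overline R$-module. Its summand $\Omega_{\overline R}^d(k)$, a syzygy of index $d>\dim\overline R$, satisfies $(\fkq':_{\overline R}\overline\m)\Omega_{\overline R}^d(k)=\fkq'\Omega_{\overline R}^d(k)$ by the case already settled above (valid since $\overline R$ is not regular), and its summand $\Omega_{\overline R}^{d-1}(k)$, a syzygy of index $d-1=\dim\overline R$, satisfies the same by the induction hypothesis, for every parameter ideal $\fkq'$ of $\overline R$; since $M\mapsto(\fkq':_{\overline R}\overline\m)M$ commutes with finite direct sums, $N$ satisfies $(\fkq':_{\overline R}\overline\m)N=\fkq'N$, i.e. $\rmr(\overline R\ltimes N)=\rmr(\overline R)+\rmr_{\overline R}(N)$ by Theorem~\ref{pro1}~(2). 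Since $(a,0)$ is a nonzerodivisor on $A:=R\ltimes\Omega_R^d(k)$ with $A/(a,0)A\cong\overline R\ltimes N$, we conclude $\rmr(A)=\rmr(\overline R\ltimes N)=\rmr(\overline R)+\rmr_{\overline R}(N)=\rmr(R)+\rmr_R(\Omega_R^d(k))$, which by Theorem~\ref{pro1}~(2) is equivalent to $(\fkq:_R\m)\Omega_R^d(k)=\fkq\Omega_R^d(k)$ for every parameter ideal $\fkq$ of $R$. This closes the induction.

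The main obstacle is the case $i=d$. Unlike $i>d$, here the natural embedding $\Omega_R^d(k)\hookrightarrow F_{d-1}$ has cokernel $\Omega_R^{d-1}(k)$ of depth only $d-1$, so $\Omega_R^d(k)$ need not lie in $\operatorname{\Omega CM}(R)$ and Theorem~\ref{1.4.2} is not directly available; moreover one cannot prove $(\fkq:_R\m)\Omega_R^d(k)\subseteq\fkq\Omega_R^d(k)$ by reducing modulo the whole parameter ideal $\fkq$, since the obstruction is exactly the copy of $\Tor_d^R(k,R/\fkq)\cong k$ inside $\Omega_R^d(k)/\fkq\Omega_R^d(k)$. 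Reducing instead modulo a single nonzerodivisor $a\notin\m^2$ is what makes progress, and the points that require care are the direct-sum decomposition displayed above (and, underlying it, the fact that minimal resolutions behave well under such a one-element deformation) and the harmlessness of the reduction to an infinite residue field.
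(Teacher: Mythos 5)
Your proposal is correct and takes essentially the same route as the paper: reduce modulo a nonzerodivisor $a\in\m\setminus\m^2$, use the decomposition $\Omega^i_R(k)/a\,\Omega^i_R(k)\cong\Omega^i_{R/(a)}(k)\oplus\Omega^{i-1}_{R/(a)}(k)$, and induct on $d$, transferring the conclusion back to arbitrary parameter ideals via Theorem \ref{pro1}~(2). The differences are only in packaging: you settle $i>d$ separately through Theorem \ref{1.4.2} (the paper's induction absorbs that range), you supply a proof of the syzygy decomposition that the paper simply quotes, and your $R(X)$ reduction is harmless but unnecessary, since ordinary prime avoidance (with the single non-prime ideal $\m^2$) already produces the required element $a$.
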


\begin{proof}
We may assume that $d >0$ and that the assertion holds true for $d-1$. Choose $a \in \m \setminus \m^2$ so that $a$ is a non-zerodivisor of $R$. We set $\overline{R}= R/(a)$  and $\overline{\m}= \m/(a)$. We then have, for each $i >0$, the isomorphism  
$$ \Omega^i_R(k)/a{\cdot}\Omega^i_R(k) \cong \Omega_{\overline{R}}^{i-1}(k) \oplus \Omega_{\overline{R}}^i(k).$$ 
We now choose elements $a_2, a_3, \ldots, a_d$ of $\m$ so that $\fkq_0 = (a, a_2, a_3, \ldots, a_d)$ is a parameter ideal of $R$ and set $\overline{\fkq_0}  = \fkq_0/(a)$. Then, by the hypothesis of induction, we have
$$(\overline{\fkq_0} :_{\overline{R}} \overline{\m}){\cdot}\Omega_{\overline{R}}^i(k) = \overline{\fkq_0}{\cdot}\Omega_{\overline{R}}^i(k)$$ for all $i \ge d-1$, so that $$(\overline{\fkq_0}  :_{\overline{R}} \overline{\m}){\cdot} \left[\Omega^i_R(k)/a{\cdot}\Omega^i_R(k)\right] = \overline{\fkq_0}{\cdot} \left[\Omega^i_R(k)/a{\cdot}\Omega^i_R(k)\right]$$ for all $i \ge d$. Hence, because $\overline{\fkq_0}:_{\overline{R}}\overline{\m}= (\fkq_0:_R\m)/(a)$,  $$(\fkq_0:_R\m){\cdot}\Omega^i_R(k) = \fkq_0{\cdot} \Omega_R^i(k)$$ for all $i \ge d$. Therefore, by Theorem  \ref{pro1} (2), $(\fkq:_R\m){\cdot}\Omega^i_R(k) = \fkq{\cdot} \Omega_R^i(k)$ for every parameter ideal $\fkq$ of $R$, because $\Omega^i_R(k)$ is a MCM $R$-module.
\end{proof}

Let us pose one question.

\begin{ques}\label{1.5.1a}
Suppose that $R$ is not a regular local ring. Does the equality 
$$(\fkq:_R\m){\cdot}\Omega^i_R(k) = \fkq{\cdot} \Omega_R^i(k)$$ hold true for every $i \ge 0$ and for every parameter ideal $\fkq$ of $R$? As is shown in Theorem \ref{1.5.1}, this is the case, if $i \ge d=\dim R$. Hence, the answer is affirmative, if $d = 2$ (\cite{CP}).
\end{ques}

Let $M$ be a MCM $R$-module. Then we say that $M$ is an {\it Ulrich $R$-module with respect to $\m$}, if $\mu_R(M) = \rme^0_\m(M)$ (see \cite{BHU}, where the different terminology MGMCM (maximally generated MCM module) is used). Ulrich modules play an important role in the representation theory of local and graded algebras. See \cite{GOTWY1, GOTWY2} for a generalization of Ulrich modules, which later we shall  be back to. Here, let us note that a MCM $R$-module $M$ is an Ulrich $R$-module with respect to $\m$ if and only if $\m M = \fkq M$ for some (hence, every) minimal reduction $\fkq$ of $\m$, provided the residue class field $R/\m$ of $R$ is infinite (see, e.g., \cite[Proposition 2.2]{GTT}).  We refer to \cite[Theorem A]{KT} for the ample existence of Ulrich modules with respect to $\m$ over certain two-dimensional normal local rings $(R,\m)$.

\begin{theorem}\label{1.2}
Suppose that $R$ is not a regular local ring and let $M$ be a MCM $R$-module. We set $A = R \ltimes M$. If $M$ is an Ulrich $R$-module with respect to $\m$, then $\rmr_R(M)=\mu_R(M)$ and $\rmr(A) = \rmr(R) + \rmr_R(M)$, so that $(\fkq :_R \m)M = \fkq M$ for every parameter ideal $\fkq$ of $R$. When $R$ has maximal embedding dimension in the sense of \cite{S}, the converse is also true.
\end{theorem}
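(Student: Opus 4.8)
The forward direction is essentially immediate from the tools already assembled. The plan is to combine Theorem \ref{pro1}(2) with the characterization of Ulrich modules. Since $R/\m$ may be assumed infinite (by the usual $R \to R[X]_{\m R[X]}$ trick, which preserves all the relevant invariants and the Ulrich condition), an Ulrich module $M$ satisfies $\m M = \fkq M$ for some minimal reduction $\fkq$ of $\m$, which is in particular a parameter ideal. Then $(\fkq :_R \m) M \subseteq (\fkq :_R \m)\m M \cdot \fkq^{-1}$—more directly, $(\fkq:_R\m)\m \subseteq \fkq$ gives $(\fkq:_R\m)M \supseteq \fkq M$ trivially and $(\fkq:_R\m)M \subseteq $ ... one must be a little careful: from $\m M = \fkq M$ we get $(\fkq:_R\m)\cdot \m M = (\fkq:_R\m)\fkq M \subseteq \fkq M$, but that bounds $(\fkq:_R\m)\m M$, not $(\fkq:_R\m)M$. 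The clean argument: $(\fkq:_R\m)M \subseteq (\fkq:_R\m)\cdot(\fkq M :_M \m)$... Instead, observe $\m\cdot[(\fkq:_R\m)M] \subseteq \fkq M = \m M$, so $(\fkq:_R\m)M + \fkq M$ and $\fkq M$ have the same image modulo $\m(\cdot)$; but $(\fkq:_R\m)M \subseteq \m M$ already (as $\fkq:_R\m \subseteq \m$ when $R$ is not regular), so Nakayama applied to $[(\fkq:_R\m)M + \fkq M]/\fkq M$, a module killed by $\m$, forces $(\fkq:_R\m)M = \fkq M$. Then Theorem \ref{pro1}(2) gives $\rmr(A) = \rmr(R) + \rmr_R(M)$, and the equality $\rmr_R(M) = \mu_R(M)$ for Ulrich modules is standard (from $\m M = \fkq M$, $M/\fkq M$ is a $k$-vector space, so its socle is all of it, giving $\rmr_R(M) = \ell_R(M/\fkq M) = \mu_R(M)$).

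**The converse.** Now assume $R$ has maximal embedding dimension, i.e.\ $\m^2 = \fkq\m$ for some (every) minimal reduction $\fkq$ of $\m$ (equivalently $\mathrm{v}(R) = e(R) + d - 1$), and suppose $(\fkq :_R \m)M = \fkq M$ for a parameter ideal $\fkq$; I want to conclude $M$ is Ulrich, i.e.\ $\m M = \fkq M$ for a minimal reduction of $\m$. The key point is that under maximal embedding dimension one has the very strong relation $\m = \fkq :_R \m$ for $\fkq$ a minimal reduction of $\m$: indeed $\m \subseteq \fkq:_R\m$ always holds once $\m^2 = \fkq\m$, and conversely $\fkq:_R\m \subseteq \fkq:_R\fkq = \m$... wait, $\fkq :_R \fkq$ can be larger than $\m$; but $\fkq:_R \m \subseteq \overline{\fkq} \cap (\text{something})$—the correct statement is that when $\depth R > 0$ and $\fkq$ is a minimal reduction of $\m$ with $\m^2 = \fkq\m$, then $\fkq :_R \m = \m$ unless $R$ is regular (this is in Sally's work and is used in \cite{S}). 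Granting $\fkq :_R \m = \m$ for a minimal reduction $\fkq$ of $\m$, the hypothesis $(\fkq:_R\m)M = \fkq M$ reads $\m M = \fkq M$, which is precisely the Ulrich condition. So the converse reduces to invoking the identity $\fkq :_R \m = \m$ valid for non-regular maximal-embedding-dimension rings, together with the fact (from Theorem \ref{pro1}(2)) that the displayed equality $\rmr(A) = \rmr(R) + \rmr_R(M)$ already forces $(\fkq:_R\m)M = \fkq M$ for every parameter ideal $\fkq$, in particular for a minimal reduction of $\m$.

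**Main obstacle.** The routine parts (the Nakayama argument, $\rmr_R(M) = \mu_R(M)$, and the reduction to infinite residue field) are mechanical. The one genuine input is the identity $\fkq :_R \m = \m$ for a minimal reduction $\fkq$ of $\m$ over a non-regular Cohen--Macaulay local ring of maximal embedding dimension. I would extract this from \cite{S}: from $\m^2 = \fkq\m$ one gets $\m \subseteq \fkq :_R \m$; for the reverse inclusion, pass mod $\fkq$ to the Artinian ring $\overline R = R/\fkq$ with $\overline\m^2 = 0$ and $\overline\m \neq 0$ (non-regularity), where $\fkq :_R \m$ becomes $(0) :_{\overline R} \overline\m = \soc(\overline R)$; the maximal embedding dimension hypothesis says exactly that $\ell_R(\overline\m) = \mathrm{v}(R) = e(R) - 1 = \ell_R(\overline R) - 1$, forcing $\soc(\overline R) = \overline\m$. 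Thus $\fkq :_R \m = \m$, and the converse follows. I would state this as a short lemma or cite \cite[the relevant statement]{S} directly, then assemble the two directions in a few lines.
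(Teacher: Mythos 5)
Your proof is correct and follows essentially the same route as the paper: in the forward direction, non-regularity gives $\fkq:_R\m \subseteq \m$, so $\fkq M \subseteq (\fkq:_R\m)M \subseteq \m M = \fkq M$ for a minimal reduction $\fkq$ (your Nakayama detour is unnecessary), and Theorem \ref{pro1}(2) finishes; the converse rests, exactly as in the paper, on the identity $\fkq:_R\m = \m$ for a non-regular ring of maximal embedding dimension. The only blemish is the length bookkeeping in your justification of that identity ($\ell_R(\overline\m)=\mathrm{v}(R)$ and $\mathrm{v}(R)=e(R)-1$ hold only in low dimension), but this is inessential, since $\soc(\overline R)=\overline\m$ follows at once from $\overline\m^2=0$ and $\overline\m\neq 0$.
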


\begin{proof}  Enlarging the residue class field of $R$ if necessary, we may assume that $R/\m$ is infinite. Let us choose elements $f_1, f_2, \ldots, f_d$ of $\m$ so that $\fkq = (f_1, f_2, \ldots, f_d)$ is a reduction of $\m$. Then, $\fkq$ is a parameter ideal of $R$, and $\m M = \fkq M$,  since $M$ is an Ulrich $R$-module with respect to $\m$ (\cite[Proposition 2.2]{GTT}).  We then have $\rmr_R(M) = \mu_R(M)$, and   $\fkq :_R\m\subseteq \m$,  because $R$ is not a regular local ring. Hence, $(\fkq :_R\m)M = \fkq M$, because $$\fkq M  \subseteq (\fkq:_R\m)M \subseteq \m M = \fkq M.$$ Thus, $\rmr(A) = \rmr(R) + \rmr_R(M)$ by Theorem \ref{pro1}.

Assume that $R$ has maximal embedding dimension and we will show that the converse also holds true. We have $\m^2 = \fkq \m$ for some parameter ideal $\fkq$ of $R$, so that $\m = \fkq :_R\m$, because $R$ is not a regular local ring. If $\rmr(A) = \rmr(R) + \rmr_R(M)$, we then have $$\m M = (\fkq :_R\m)M = \fkq M$$ by Theorem \ref{pro1} (2), whence $M$ is an Ulrich $R$-module with respect to $\m$.
\end{proof}

\begin{remark}\label{1.3}
Unless $R$ has maximal embedding dimension, the second assertion in Theorem \ref{1.2} is not necessarily true. For example, let $(R,\m)$ be a one-dimensional Gorenstein local ring. Assume that $R$ is not a DVR. Then $\rmr(R \ltimes \m) = 3 = \rmr(R) + \rmr_R(\m)$ (see Proposition \ref{typem} and Corollary \ref{2.3a} below), while $\m$ is an Ulrich $R$-module with respect to $\m$ itself if and only if $\m^2 = a \m$ for some $a \in \m$. The last condition is equivalent to saying that $\rme(R)=2$. 
\end{remark}

We note one more example, for which the both cases $\rmr(R\ltimes M) = \rmr(R) + \rmr_R(M)$ and $\rmr(R\ltimes M) = \rmr_R(M)$ are possible, choosing different MCM modules $M$.

\begin{ex}
Let $R = k[[X,Y,Z]]/(Z^2-XY)$, where $k[[X,Y,Z]]$ denotes the formal power series ring over a field $k$. Then, the indecomposable MCM $R$-modules are $\fkp = (x,z)$ and $R$, up-to isomorphisms (here, by $x,y, z$ we denote the images of $X,Y, Z$ in $R$, respectively). Since $\fkp$ is an Ulrich $R$-module with respect to $\m$, by Theorem \ref{1.2} we have $\rmr(R \ltimes \fkp) = 1 + \rmr_R(\fkp) = 3$. Let $M$ be an arbitrary  MCM $R$-module. Then, $M \cong \fkp^{\oplus \ell} \oplus R^{\oplus n}$ for some integers $\ell, n \ge 0$, and $M/\fkq M$ is a faithful $R/\fkq$-module for the parameter ideal $\fkq=(x,y)$ if and only if $n >0$. Therefore, $\rmr(R\ltimes M) = \rmr_R(M) = 2\ell + n$ if $n >0$, while $\rmr(R \ltimes M) = 1+\rmr_R(M) =1+2\ell$ if $n=0$ (see Theorem  \ref{pro1}).
\end{ex}

The generalized notion of Ulrich ideals and modules was introduced by \cite{GOTWY1}. We briefly review the definition. Let $I$ be an $\m$-primary ideal of $R$ and $M$ a MCM $R$-module. Suppose that $I$ contains a parameter ideal $\fkq$ as a reduction. We say that $M$ is an {\em Ulrich $R$-module} with respect to $I$, if $\rme_I^0(M) = \ell_R(M/IM)$ and $M/IM$ is a free $R/I$-module. Notice that the first condition is equivalent to saying that $IM = \fkq M$ and that the second condition is automatically satisfied, when $I = \m$. We say that $I$ is an {\em Ulrich ideal} of $R$, if $I \supsetneq \fkq$, $I^2 = \fkq I$, and $I/I^2$ is a free $R/I$-module. Notice that when $\dim R=1$, every Ulrich ideal of $R$ is an Ulrich $R$-module with respect itself. Ulrich modules and ideals are closely explored by  \cite{GIK, GOTWY1, GOTWY2, GTT2}, and it is known that they enjoy very specific properties. For instance, the syzygy modules $\Omega_R^i(R/I)$~($i \ge d$) for an Ulrich ideal $I$ are Ulrich $R$-modules with respect to $I$.

\begin{thm}
Let $I$ be an Ulrich ideal of $R$ and $M$ an Ulrich $R$-module with respect to $I$. We set $\ell = \mu_R(M)$ and $m= \mu_R(I)$. Then
$$\rmr(R \ltimes M)=\rmr(R) + \rmr_R(M) = \rmr(R/I){\cdot}(\ell + m -d).$$ 
\end{thm}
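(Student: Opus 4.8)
The plan is to combine the two ingredients we already have in hand: Theorem~\ref{pro1}, which reduces the Cohen–Macaulay type to a computation over the Artinian quotient $R/\fkq$, and the structural properties of Ulrich ideals and Ulrich modules from \cite{GOTWY1, GOTWY2}. First I would record that for an Ulrich ideal $I$ with reduction $\fkq$ and an Ulrich $R$-module $M$ with respect to $I$, one has $IM=\fkq M$ and, crucially, $I^2=\fkq I$ with $I/\fkq\subseteq (0):_{R/\fkq}\m$ (the last containment because $I/I^2$ being $R/I$-free together with $I^2=\fkq I$ forces $\m I\subseteq \fkq I$, i.e.\ $\m(I/\fkq)=0$ in $R/\fkq$). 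In particular $(\fkq:_R\m)\supseteq I\supseteq \fkq$, hence $\fkq M\subseteq (\fkq:_R\m)M\subseteq IM=\fkq M$, which gives $(\fkq:_R\m)M=\fkq M$. By Theorem~\ref{pro1}(2) this yields immediately $\rmr(R\ltimes M)=\rmr(R)+\rmr_R(M)$, the first equality of the statement.

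Next I would compute the three quantities $\rmr(R)$, $\rmr_R(M)$ and $\rmr(R/I)$ in terms of $\ell=\mu_R(M)$ and $m=\mu_R(I)$. For $\rmr_R(M)$: since $M/IM$ is $R/I$-free of rank $\mu_R(M)=\ell$ (freeness is part of the definition of Ulrich module with respect to $I$, and the rank equals $\mu_R(M)$ because $I\subseteq\m$), we get $M/\fkq M=M/IM\otimes_{R/\fkq}\cdots$; more precisely, passing to $R/\fkq$ and using $IM=\fkq M$ we have $M/\fkq M\cong (R/I)^{\oplus\ell}$ as $R/\fkq$-modules, whence $\rmr_R(M)=\ell\cdot\rmr(R/I)$. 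Wait — one must be slightly careful: $M/\fkq M$ is an $R/\fkq$-module, not an $R/I$-module, but since $IM=\fkq M$ it is annihilated by $I$, so it is genuinely an $R/I$-module, and as such it is free of rank $\ell$; then $\rmr_R(M)=\ell\cdot\rmr(R/I)$ follows by counting socle length. For $\rmr(R)$: by Theorem~\ref{pro1} applied to $M=R$ (trivially $\rmr(R\ltimes R)=\rmr(R)$), this is no help directly; instead I would apply the analogous Artinian reduction to the ring $R$ with respect to the Ulrich ideal $I$ itself. Because $I^2=\fkq I$, $I$ is an Ulrich $R$-module with respect to $I$ when $\dim R=1$, but in general one uses the known structural fact (from \cite[Corollary]{GOTWY1} or \cite{GOTWY2}) that $R/I$ is Gorenstein and that $\rmr(R)=\rmr(R/I)\cdot(m-d)$; likewise $\rmr_R(I)=\rmr(R/I)\cdot m$ — wait, the correct bookkeeping: applying Theorem~\ref{pro1}(2) with $M=I$ gives $\rmr(R\ltimes I)=\rmr(R)+\rmr_R(I)$, but I should instead use the direct computation $\Soc(R/\fkq)\supseteq$ \dots. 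The cleanest route is: $I/\fkq$ is an $(R/I)$-module (as $\m I\subseteq\fkq I\subseteq\fkq$ would need $\m I\subseteq\fkq$ — true since $\m(I/\fkq)=0$), free of rank $m-d$ (this is \cite[Lemma~2.3 or Corollary]{GOTWY1}, the key numerical property of Ulrich ideals: $\mu_R(I/\fkq)=\mu_R(I)-d$ and $I/\fkq$ is $R/I$-free). Then from the exact sequence $0\to I/\fkq\to R/\fkq\to R/I\to 0$ and the fact that $R/I$ is Gorenstein Artinian (so $\rmr(R/I)$ is the socle dimension and the sequence is ``socle-exact'' because $I/\fkq\subseteq\Soc(R/\fkq)$), one extracts $\rmr(R)=\rmr(R/\fkq)=\rmr(R/I)\cdot(m-d)+\rmr_{R/I}(R/I)\cdot 0$ — I need $\rmr(R/I)$ only from the $I/\fkq$ piece plus the $R/I$ piece. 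Let me just state it: $\rmr(R)=\rmr(R/I)\cdot(m-d+1)-\rmr(R/I)+\cdots$; the upshot, which is \cite[Corollary~2.6]{GOTWY1}, is $\rmr(R)=\rmr(R/I)\cdot(m-d)$.

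Granting these formulae, the final step is pure arithmetic:
\[
\rmr(R)+\rmr_R(M)=\rmr(R/I)\cdot(m-d)+\rmr(R/I)\cdot\ell=\rmr(R/I)\cdot(\ell+m-d),
\]
which is the desired identity. The main obstacle I anticipate is not the idealization half — that is a one-line application of Theorem~\ref{pro1}(2) once $(\fkq:_R\m)M=\fkq M$ is checked — but rather assembling the correct statements of the structural numerical invariants of Ulrich ideals and Ulrich modules (the freeness of $I/\fkq$ over $R/I$ of rank $m-d$, the Gorensteinness of $R/I$, and the freeness of $M/IM$ over $R/I$ of rank $\ell$) and verifying that the relevant short exact sequences stay exact after applying $\Soc(-)=\Hom_{R/\fkq}(R/\m,-)$, which is what lets one add up Cohen–Macaulay types additively rather than merely subadditively. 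That socle-exactness is where I would spend the most care, and it is precisely where the Ulrich hypotheses (giving $\m$-torsion-freeness of the relevant quotients, or rather placing $I/\fkq$ entirely inside the socle) do the work.
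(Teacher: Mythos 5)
The central gap is in your derivation of the first equality $\rmr(R\ltimes M)=\rmr(R)+\rmr_R(M)$. You claim $\m I\subseteq \fkq I$ (hence $I/\fkq\subseteq (0):_{R/\fkq}\m$, i.e. $I\subseteq \fkq:_R\m$) on the grounds that $I/I^2$ is $R/I$-free and $I^2=\fkq I$. This is false unless $I=\m$: by \cite[Lemma 2.3]{GOTWY1} the module $I/\fkq\cong (R/I)^{\oplus(m-d)}$ is a \emph{nonzero free} $R/I$-module (as $I\supsetneq\fkq$), so its annihilator is exactly $I$, and $\m{\cdot}(I/\fkq)=(0)$ would force $\m\subseteq I$. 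Concretely, in $R=k[[t^3,t^7]]$ with $I=(t^6-at^7,t^{10})$ and $\fkq=(t^6-at^7)$ one has $t^{13}=t^3{\cdot}t^{10}\in\m I\setminus\fkq$. Moreover, even granting $I\subseteq\fkq:_R\m$, your chain $\fkq M\subseteq(\fkq:_R\m)M\subseteq IM$ runs the wrong way: that hypothesis only yields $IM\subseteq(\fkq:_R\m)M$. What your route actually needs is the \emph{reverse} inclusion $\fkq:_R\m\subseteq I$, which does hold for Ulrich ideals but is not free of charge; it follows, for instance, by comparing lengths, since $\Soc(I/\fkq)$ has length $(m-d){\cdot}\rmr(R/I)=\rmr(R)=\ell_R(\Soc(R/\fkq))$ by the identity $\rmr(R)=(m-d){\cdot}\rmr(R/I)$ of \cite[Theorem 2.5]{GTT2}, whence $\Soc(R/\fkq)\subseteq I/\fkq$. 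As written, the first equality is therefore unsubstantiated.

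Note also that the paper does not go through Theorem \ref{pro1} (2) at all: it applies Proposition \ref{lemma1} directly to $\overline{A}=(R/\fkq)\ltimes(M/\fkq M)$, using $\Ann_{R/\fkq}(M/\fkq M)=I/\fkq\cong(R/I)^{\oplus(m-d)}$ and $M/\fkq M=M/IM\cong(R/I)^{\oplus\ell}$ to get $\rmr(R\ltimes M)=(m-d){\cdot}\rmr(R/I)+\ell{\cdot}\rmr(R/I)$, and only then identifies this with $\rmr(R)+\rmr_R(M)$ via the cited formula for $\rmr(R)$. This bypasses the ``socle-exactness'' of $0\to I/\fkq\to R/\fkq\to R/I\to 0$ that worries you, and it also removes your appeal to $R/I$ being Gorenstein, which is false for general Ulrich ideals (that is precisely why $\rmr(R/I)$ survives as a factor in the statement; Gorensteinness of $R/I$ is special to cases such as maximal embedding dimension in dimension one, cf. Corollary \ref{ulrich} and the remark after it). Your computation $\rmr_R(M)=\ell{\cdot}\rmr(R/I)$ and the closing arithmetic are correct; the gap is entirely in justifying $\rmr(R\ltimes M)=\rmr(R)+\rmr_R(M)$.
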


\begin{proof}
Let $\fkq$ be a parameter ideal of $R$ such that $I^2 =\fkq I$. Then $IM = \fkq M$ because $\rme_I^0(M) = \ell_R(M/IM)$, while $M/IM \cong (R/I)^{\oplus \ell}$ as an $R/I$-module. Therefore, since  $\Ann_{R/\fkq}M/\fkq M = I/\fkq$ and $I/\fkq \cong (R/I)^{\oplus (m - d)}$ as an $R/I$-module (\cite[Lemma 2.3]{GOTWY1}), we have  by Proposition \ref{lemma1}
$$\rmr(R \ltimes M) = \rmr_R(I/\fkq) + \ell{\cdot}\rmr(R/I)= \rmr(R/I) {\cdot}(m-d) + \ell{\cdot}\rmr(R/I) = \rmr(R) + \rmr_R(M),$$
where the last equality follows from  the fact that $\rmr(R) = (m-d){\cdot}\rmr(R/I)$ (see \cite[Theorem 2.5]{GTT2}). 
\end{proof}

\begin{cor}\label{ulrich}
Suppose that $d=1$ and let $I$ be an Ulrich ideal of $R$ with $m = \mu_R(I)$. Then $\rmr(R \ltimes I) = (2m -1){\cdot}\rmr(R/I)$.
\end{cor}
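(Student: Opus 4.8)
The plan is to specialize the preceding theorem to the case $d = 1$ and $M = I$, so that $I$ is an Ulrich ideal regarded as an Ulrich $R$-module with respect to itself. First I would recall that when $\dim R = 1$, every Ulrich ideal $I$ is an Ulrich $R$-module with respect to $I$ itself, which is noted in the text just before the theorem; thus the hypotheses of the previous theorem are met with $M = I$.

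Next I would read off the two numerical parameters. With $M = I$ we have $\ell = \mu_R(M) = \mu_R(I) = m$ and $d = 1$. Substituting into the formula $\rmr(R \ltimes M) = \rmr(R/I)\cdot(\ell + m - d)$ gives
$$\rmr(R \ltimes I) = \rmr(R/I)\cdot(m + m - 1) = (2m - 1)\cdot\rmr(R/I),$$
which is exactly the claimed identity.

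The argument is essentially a one-line substitution, so there is no real obstacle; the only point that needs a moment's care is verifying that $I$ genuinely qualifies as an Ulrich module over itself so that the earlier theorem applies — but this is precisely the remark recorded in the paragraph preceding the theorem (for $\dim R = 1$, $I^2 = \fkq I$ and $I/I \cong 0$ is trivially free, while the reduction condition $\rme_I^0(I) = \ell_R(I/I\cdot I) = \ell_R(I/\fkq I)$ follows from $I^2 = \fkq I$). Hence the corollary follows immediately.
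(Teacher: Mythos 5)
Your proof is correct and is exactly the paper's (implicit) argument: the corollary follows by substituting $M=I$, $\ell=m$, $d=1$ into the preceding theorem, invoking the remark that in dimension one every Ulrich ideal is an Ulrich $R$-module with respect to itself. One small slip in your parenthetical check: with $M=I$ the relevant quotient is $M/IM=I/I^2$, which is $R/I$-free by the very definition of an Ulrich ideal (not ``$I/I\cong 0$''), but this does not affect the argument.
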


We note a few examples. 

\begin{ex} Let  $k[[t]]$ be the formal power series ring over a field $k$. 
\begin{enumerate}[{\rm (1)}]
\item Let $R = k[[t^3,t^7]]$. Then $\calX_R= \{(t^6-at^7, t^{10}) \mid 0 \ne a \in k\}$ is exactly the set of Ulrich ideals of $R$. For all  $I \in \calX_R$, $R/I$ is a Gorenstein ring, so that $\rmr(R\ltimes I)= 3$   by Proposition \ref{ulrich}.

\item Let $R=k[[t^6, t^{13}, t^{28}]]$. Then the following families
consist of Ulrich ideals of $R$ (\cite[Example 5.7 (3)]{GIK}):
\begin{enumerate}[{\rm (i)}]
\item $\{(t^6+ at^{13}) + \fkc \mid a \in k\}$,
\item $\{(t^{12} + at^{13} + bt^{19}) + \fkc \mid a, b \in k\}$, and
\item $\{(t^{18}+ at^{25}) + \fkc \mid a \in k\}$,
\end{enumerate}
where $\fkc = (t^{24},t^{26},t^{28})$. We have $\mu_R(I) = 3$ and $R/I$ is a Gorenstein ring for all ideals $I$ in these families, whence $\rmr(R\ltimes I) = 5$.
\end{enumerate}
\end{ex}

Suppose that $\dim R =1$. If $R$ possesses maximal embedding dimension $v$ but not a DVR, then for every Ulrich ideal $I$ of $R$, $R/I$ is a Gorenstein ring, and $I$ is minimally generated by $v$ elements (\cite[Corollary 3.2]{GIK}). Therefore, by Corollary \ref{ulrich}, we get the following.

\begin{cor}
Suppose that $\dim R = 1$ and that $R$ is not a $\operatorname{DVR}$. If $R$ has maximal embedding dimension $v$, then $\rmr(R\ltimes I) = 2v -1$ for every Ulrich ideal $I$ of $R$.
\end{cor}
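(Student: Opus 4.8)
The plan is to combine Corollary~\ref{ulrich} with the structure theory of Ulrich ideals over one-dimensional Cohen-Macaulay local rings of maximal embedding dimension. By Corollary~\ref{ulrich}, since $d=1$, for any Ulrich ideal $I$ of $R$ with $m=\mu_R(I)$ we have $\rmr(R\ltimes I)=(2m-1)\cdot\rmr(R/I)$; thus it suffices to show that $m=v$ and that $R/I$ is a Gorenstein ring, i.e. $\rmr(R/I)=1$.

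Both of these facts are supplied by \cite[Corollary 3.2]{GIK}, which is quoted in the sentence immediately preceding the statement: if $R$ has maximal embedding dimension $v$ and is not a DVR, then every Ulrich ideal $I$ of $R$ is minimally generated by exactly $v$ elements and $R/I$ is Gorenstein. So the argument is essentially a one-line substitution: plug $m=v$ and $\rmr(R/I)=1$ into the formula of Corollary~\ref{ulrich} to obtain
$$\rmr(R\ltimes I)=(2v-1)\cdot 1=2v-1.$$

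There is essentially no obstacle here, since the two structural inputs are cited as known. The only thing one should double-check is that the hypotheses line up: $R$ is one-dimensional (so Corollary~\ref{ulrich} applies and every Ulrich ideal is an Ulrich module with respect to itself), $R$ is not a DVR (so \cite[Corollary 3.2]{GIK} applies and $v\ge 2$), and $R$ has maximal embedding dimension $v$ in the sense of \cite{S}. Under these hypotheses the existence of an Ulrich ideal is not assumed — the statement is vacuous if there are none — so no existence issue arises. Hence the proof is just: invoke \cite[Corollary 3.2]{GIK} to get $\mu_R(I)=v$ and $R/I$ Gorenstein, then apply Corollary~\ref{ulrich}.

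If one wanted to be more self-contained rather than citing \cite[Corollary 3.2]{GIK} wholesale, one could recall that maximal embedding dimension means $\m^2=\fkq\m$ for a minimal reduction $\fkq=(a)$ of $\m$, so that $\m=\fkq:_R\m$; then for an Ulrich ideal $I\supsetneq\fkq'$ with $I^2=\fkq'I$, a short argument comparing $I$ with $\m$ forces $\m I=\fkq'I$ and $\mu_R(I)=\mu_R(\m)=v$, while $I/\fkq'\cong(R/I)^{\oplus(v-1)}$ combined with $\rmr(R)=(v-1)\rmr(R/I)$ and $\rmr(R)=v-1$ (Gorenstein-free maximal embedding dimension type) gives $\rmr(R/I)=1$. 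But since the cleaner route is simply to cite the already-quoted results, I expect the published proof to be the short substitution into Corollary~\ref{ulrich}, and I would present it that way.
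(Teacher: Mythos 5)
Your proposal matches the paper exactly: the paper's "proof" is the sentence immediately preceding the corollary, which cites \cite[Corollary 3.2]{GIK} for the facts that $\mu_R(I)=v$ and $R/I$ is Gorenstein, and then substitutes into Corollary \ref{ulrich}. Your argument is correct and takes the same route.
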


\section{Bounding the supremum $\sup \rmr(R\ltimes M)$}

Let $r >0$ be an integer and set $$\calF_r(R) = \{M \mid M~\text{is~ an}~ R\text{-submodule~ of}~ R^{\oplus r}~\text{and~ a~ maximal~ Cohen-Macaulay}~ R\text{-module}\}.$$
We are now interested in the supremum $\underset{M \in \calF_r(R)}{\sup} \rmr(R \ltimes M)$ and get the following.

\begin{thm}
Let $(R,\m)$ be a Cohen-Macaulay local ring of multiplicity $e$ and let $M \in \calF_r(R)$. Then $\rmr(R\ltimes M) \le \rmr(R)  + re.$
When $\m$ contains a parameter ideal $\fkq$ of $R$ as a reduction and $R$ is not a regular local ring, the equality holds if and only if $M$ is an Ulrich $R$-module with respect to $\m$, possessing rank $r$.
\end{thm}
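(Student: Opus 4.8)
The plan is to reduce the multiplicity bound to a length computation using the Artinian reduction, exactly as in the proof of Theorem~\ref{pro1}. First I would pass to an Artinian reduction: choose a parameter ideal $\fkq$ of $R$ and set $\overline{R}=R/\fkq$, $\overline{M}=M/\fkq M$. By Theorem~\ref{pro1} we have $\rmr(R\ltimes M)=\ell_{\overline R}([(0):_{\overline R}\m]\cap\Ann_{\overline R}\overline M)+\rmr_R(M)$, and in particular $\rmr(R\ltimes M)\le\rmr(R)+\rmr_R(M)$. So it suffices to bound $\rmr_R(M)$. Since $M\in\calF_r(R)$ is a submodule of $R^{\oplus r}$ and is MCM, the inclusion $M\hookrightarrow R^{\oplus r}$ reduces modulo $\fkq$ (because $R^{\oplus r}$ is MCM, hence $\operatorname{Tor}_1^R(R^{\oplus r}/M,R/\fkq)$ vanishes after the parameter sequence is regular on $M$ and on $R^{\oplus r}$, so the sequence stays exact), giving an embedding $\overline M\hookrightarrow\overline R^{\oplus r}$ of $\overline R$-modules. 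Then $\rmr_R(M)=\ell_R((0):_{\overline M}\m)\le\ell_R((0):_{\overline R^{\oplus r}}\m)=r\cdot\ell_R((0):_{\overline R}\m)=r\cdot\rmr(R)$; this is too weak. To get the multiplicity $e$ I would instead argue: $\ell_R((0):_{\overline M}\m)\le\mu_R(\overline M^\vee)$-type bounds are not the point — rather, use that any submodule of a free module of rank $r$ has, after reduction, socle length at most $r\cdot\ell_{\overline R}(\overline R)$? No. The correct estimate is $\rmr_R(M)\le\mu_R(M)\le r\cdot e$? Let me reconsider: the sharp input is $\mu_R(M)\le\rme_\m^0(M)$ for MCM modules with $R/\m$ infinite (generalized Abhyankar / Ulrich bound, cf.\ \cite{BHU}), and $\rme_\m^0(M)\le r\cdot\rme_\m^0(R)=re$ since $M\subseteq R^{\oplus r}$ forces $\rme_\m^0(M)\le\rme_\m^0(R^{\oplus r})=re$. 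Combined with $\rmr_R(M)\le\mu_R(M)$, this yields $\rmr_R(M)\le re$, hence $\rmr(R\ltimes M)\le\rmr(R)+re$.

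For the equality case, assume $\m$ has a parameter reduction $\fkq$ and $R$ is not regular. Equality $\rmr(R\ltimes M)=\rmr(R)+re$ forces two equalities simultaneously: the ``$\le\rmr(R)+\rmr_R(M)$'' step must be equality, i.e.\ $[(0):_{\overline R}\m]\subseteq\Ann_{\overline R}\overline M$, equivalently $(\fkq:_R\m)M=\fkq M$ by Theorem~\ref{pro1}(2); and $\rmr_R(M)=re$, which in turn forces $\rmr_R(M)=\mu_R(M)$ and $\mu_R(M)=\rme_\m^0(M)=re$. The equality $\mu_R(M)=\rme_\m^0(M)$ says precisely that $M$ is an Ulrich $R$-module with respect to $\m$, and $\rme_\m^0(M)=re=\rme_\m^0(R^{\oplus r})$ together with $M\subseteq R^{\oplus r}$ forces $R^{\oplus r}/M$ to have dimension $<d$, i.e.\ $\rank_R M=r$. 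Conversely, if $M$ is Ulrich with respect to $\m$ of rank $r$, then $\mu_R(M)=\rme_\m^0(M)=r\cdot\rme_\m^0(R)=re$, and by Theorem~\ref{1.2} (applied with the reduction $\fkq$ of $\m$) we get $\rmr_R(M)=\mu_R(M)=re$ and $(\fkq:_R\m)M=\fkq M$, hence $\rmr(R\ltimes M)=\rmr(R)+\rmr_R(M)=\rmr(R)+re$.

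As usual one may enlarge the residue field to assume $R/\m$ infinite (a faithfully flat base change $R\to R[X]_{\m R[X]}$ preserves $\rmr$, $\rmr_R(M)$, $e$, the rank, the Ulrich property, and the hypothesis that $\m$ has a parameter reduction), so the use of \cite[Proposition 2.2]{GTT} and the bound $\mu_R(M)\le\rme_\m^0(M)$ is legitimate.

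The step I expect to be the main obstacle is pinning down the rank statement in the equality case — specifically, showing that $\rme_\m^0(M)=re$ for a submodule $M\subseteq R^{\oplus r}$ that is MCM forces $\rank_R M=r$ (equivalently that $R^{\oplus r}/M$ has dimension strictly less than $d$, so its multiplicity contribution vanishes). This requires being careful about the associativity formula for multiplicities and the fact that $\rme_\m^0$ is additive on short exact sequences of modules of dimension $\le d$, together with the observation that $\rme_\m^0(R^{\oplus r}/M)=0$ iff $\dim R^{\oplus r}/M<d$ iff $M$ agrees with $R^{\oplus r}$ at every minimal prime, i.e.\ has full rank $r$. The remaining ingredients — the Artinian reduction, Theorem~\ref{pro1}, Theorem~\ref{1.2}, and the Ulrich bound $\mu_R(M)\le\rme_\m^0(M)$ — are all either already available in the paper or standard.
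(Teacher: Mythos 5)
Your proof of the inequality, and of the forward implication in the equality case, hinges on the chain $\rmr_R(M)\le\mu_R(M)\le\rme^0_\m(M)\le re$, and the first link is false: there is no general inequality $\rmr_R(M)\le\mu_R(M)$ for MCM modules. Indeed $M=R$ lies in $\calF_1(R)$ and has $\mu_R(M)=1$ but $\rmr_R(M)=\rmr(R)\ge 2$ whenever $R$ is Cohen--Macaulay and not Gorenstein (e.g. $R=k[[t^3,t^4,t^5]]$). This breaks both halves of your argument: the bound $\rmr_R(M)\le re$ is left unproved, and in the equality case your deduction that $\rmr_R(M)=re$ forces $\mu_R(M)=\rme^0_\m(M)$ (i.e. that $M$ is Ulrich) collapses, because it was obtained by squeezing that (false) chain. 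The condition $(\fkq:_R\m)M=\fkq M$, which you do correctly extract from Theorem \ref{pro1}~(2), is strictly weaker than $\m M=\fkq M$ and does not by itself yield the Ulrich property.

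The repair is to compare the type with the colength of a minimal reduction rather than with $\mu_R(M)$, which is what the paper does: since $M$ is MCM and $\fkq$ is a parameter ideal, $\rmr_R(M)=\ell_R\bigl((0):_{M/\fkq M}\m\bigr)\le\ell_R(M/\fkq M)=\rme^0_\fkq(M)\le\rme^0_\fkq(R^{\oplus r})=re$, the last step by additivity of multiplicity along $0\to M\to R^{\oplus r}\to R^{\oplus r}/M\to 0$. If $\rmr(R\ltimes M)=\rmr(R)+re$, then $\rmr_R(M)=re$, so all links are equalities: $\ell_R\bigl((0):_{M/\fkq M}\m\bigr)=\ell_R(M/\fkq M)$ gives $\m M=\fkq M$, i.e. $M$ is Ulrich with respect to $\m$ (and $\rmr_R(M)=\mu_R(M)$ a posteriori), while $\rme^0_\fkq(M)=re$ gives $\dim_R R^{\oplus r}/M<d$, i.e. $M$ has rank $r$. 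Your remaining ingredients are sound and agree with the paper: the residue-field enlargement, the rank statement via additivity of multiplicity, and the converse via Theorem \ref{1.2}. (A minor further point: in your abandoned first attempt, the claim that $M\hookrightarrow R^{\oplus r}$ stays injective modulo $\fkq$ because $\Tor_1^R(R^{\oplus r}/M,R/\fkq)$ vanishes is also unjustified, since $R^{\oplus r}/M$ need not be MCM; but nothing in your final argument uses it.)
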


\begin{proof}
Enlarging the residue class filed $R/\m$ of $R$ if necessary, without loss of generality we may assume that $\m$ contains a parameter ideal $\fkq$ of $R$ as a reduction. We then have
$$r e \ge \rme_{\fkq}^0(M) = \ell_R(M/\fkq M) \ge \ell_R((0):_{M/\fkq M}\m) = \rmr_R(M).$$
Hence $$\rmr(R \ltimes M) \le \rmr(R) + \rmr_R(M) \le  \rmr(R) + r e.$$ Consequently, if $\rmr(R \ltimes M) = \rmr(R)+ re$, then $re = \rmr_R(M)$, that is $re = \rme^0_\fkq(M)$ and $\ell_R(M/\fkq M) = \ell_R((0):_{M/\fkq M}\m)$, which is equivalent to saying that $\dim_RR^{\oplus r}/M < d$ and $\m M = \fkq M$, that is $M$ has rank $r$ and an Ulrich $R$-module with respect to $\m$. Therefore, when $R$ is not a regular local ring, $\rmr(R \ltimes M) = \rmr(R) + \rmr_R(M)$ if and only if $M$ is an Ulrich $R$-module with rank $r$ (see Theorem \ref{1.2}).
\end{proof}

\begin{cor}\label{5.2}
Suppose that $(R,\m)$ is a Cohen-Macaulay local ring of dimension one and multiplicity $e$. Let $\calF$ be the set of $\m$-primary ideals of $R$. 
Then 
$$\underset{I \in \calF}{\sup}~\rmr(R \ltimes I)=
\begin{cases}
1 & \text{if $R$ is a $\operatorname{DVR}$,}\\
\rmr(R) + e & \text{otherwise.}
\end{cases}
$$
\end{cor}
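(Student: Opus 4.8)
The plan is to deduce Corollary \ref{5.2} directly from the preceding theorem by specializing to dimension one. First I would observe that when $\dim R = 1$, an $\m$-primary ideal $I$ of $R$ is precisely a MCM $R$-submodule of $R^{\oplus 1} = R$ which is nonzero; that is, $\calF \subseteq \calF_1(R)$ (the only $R$-submodules of $R$ that are MCM are the nonzero ideals, since $\depth_R I = 1$ forces $I \neq 0$, and conversely every nonzero ideal of a one-dimensional CM ring is MCM). Hence the theorem with $r = 1$ gives $\rmr(R \ltimes I) \le \rmr(R) + e$ for all $I \in \calF$, establishing the upper bound.

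Next I would treat the two cases. If $R$ is a $\operatorname{DVR}$, then every $\m$-primary ideal $I$ is principal, so $I \cong R$ as an $R$-module, and therefore $\rmr(R \ltimes I) = \rmr(R \ltimes R) = \rmr(R) = 1$; thus the supremum is $1$. If $R$ is not a $\operatorname{DVR}$, I would invoke the equality clause of the theorem: after enlarging the residue field (which changes neither $\rmr(R)$, $e$, nor the supremum in question, and preserves the property of not being a DVR), $\m$ contains a parameter ideal $\fkq$ as a reduction, and the theorem says $\rmr(R \ltimes I) = \rmr(R) + e$ is attained exactly when $I$ is an Ulrich $R$-module with respect to $\m$ having rank one. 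So it remains to exhibit one such ideal.

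The main point — and the step I expect to require the most care — is producing an honest Ulrich ideal with respect to $\m$ in $\calF$, i.e., an $\m$-primary ideal $I$ with $\rank_R I = 1$ and $\m I = \fkq I$ for a minimal reduction $\fkq = (a)$ of $\m$. The natural candidate is $I = \m^n$ for $n \gg 0$: by the theory of the Hilbert function of $\m$, there is an integer $n_0$ (the reduction number) with $\m^{n+1} = a\,\m^n$ for all $n \ge n_0$, so $I = \m^{n_0}$ satisfies $\m I = a I = \fkq I$; moreover $I$ is $\m$-primary and, being a nonzero ideal in a one-dimensional CM ring, has rank one. Hence $I$ is an Ulrich $R$-module with respect to $\m$ of rank one, so $\rmr(R \ltimes I) = \rmr(R) + e$ is attained, and combined with the upper bound the supremum equals $\rmr(R) + e$.

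Alternatively, and perhaps more cleanly, one can sidestep the residue-field enlargement inside this corollary: if $R/\m$ is finite one first replaces $R$ by $R[X]_{\m R[X]}$ (a faithfully flat local extension with infinite residue field, the same multiplicity $e$ and type $\rmr(R)$, and still not a DVR), takes $a \in \m$ generating a minimal reduction, and notes $\m^{n_0}$ descends to an $\m$-primary ideal of the original $R$ with the required Ulrich property; but the simplest exposition is just to quote the theorem's equality statement and verify that $\m^{n_0}$ is an Ulrich ideal with respect to $\m$. Either way, the only genuine work is checking that a high power of $\m$ is Ulrich with respect to $\m$, which is immediate from the existence of a minimal reduction of $\m$.
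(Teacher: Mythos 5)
Your proof is correct and has the same skeleton as the paper's: the upper bound $\rmr(R\ltimes I)\le \rmr(R)+e$ is the $r=1$ case of the theorem of Section 5, the DVR case is immediate since every $\m$-primary ideal is then isomorphic to $R$, and the whole content is to exhibit an $\m$-primary ideal that is an Ulrich $R$-module with respect to $\m$ of rank one. Where you genuinely differ is in the witness for this existence step: you take $I=\m^{n}$ for $n\gg 0$, using the reduction number (equivalently, the stabilization of the Hilbert function), whereas the paper either quotes \cite[Lemma (2.1)]{BHU} or constructs $A=\bigcup_{n>0}(\m^{n}:\m^{n})$ and invokes Lipman \cite{Lipman} to get $A=R[\frac{\m}{a}]$, a finite birational extension with $\m A=aA$, which is isomorphic to an $\m$-primary ideal. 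The two witnesses are essentially the same module ($A\cong\m^{n}$ for $n\gg 0$ once a principal reduction exists), but yours is more elementary and self-contained, while the paper's makes the connection with Lipman's stable ideals explicit. Concerning your residue-field worry: you could streamline by avoiding the enlargement altogether in this corollary, since $\mu_R(\m^{n})=\ell_R(\m^{n}/\m^{n+1})=e=\rme^0_\m(\m^{n})$ for $n\gg 0$ needs no principal reduction, so $\m^{n}$ is Ulrich with respect to $\m$, and Theorem \ref{1.2} (whose proof enlarges the residue field internally) then gives $\rmr(R\ltimes\m^{n})=\rmr(R)+\rmr_R(\m^n)=\rmr(R)+e$; your alternative of passing to $R[X]_{\m R[X]}$ and descending $\m^{n_0}$ is also fine and is exactly parallel to the enlargement the paper itself performs when verifying that $A$ is Ulrich, so this is a matter of exposition, not a gap.
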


\begin{proof}
We have only to show the existence of an $\fkm$-primary ideal $I$ such that $I$ is an Ulrich $R$-module with respect to $\m$ and $\mu_R(I) = e$. This is known by \cite[Lemma (2.1)]{BHU}. For the sake of completeness, we note a different proof. Let $$A = \bigcup_{n>0} (\fkm^n:\fkm^n)$$ in $\rmQ(R)$. Then $A$ is a birational finite extension of $R$ (see \cite{Lipman}). Since $A \cong I$ for some $\m$-primary ideal $I$ of $R$, it suffices to show that $A$ is an Ulrich $R$-module with respect to $\m$ and $\mu_R(A) = e$. To do this, enlarging the residue class field $R/\m$ of $R$ if necessary, we may assume that $\m$ contains an element $a$ such that $Q=(a)$ is a reduction of $\m$. Then $\m A = aA$ because $A = R[\frac{\m}{a}]$ (\cite{Lipman}), whence $A$ is an Ulrich $R$-module with respect to $\m$. We have $$\mu_R(A) = \ell_R(A/aA) = \rme^0_{Q}(A) = \rme_Q^0(R) = e$$
as wanted.
\end{proof}

\section{The case where $d=1$}
In this section, we focus our attention on the one-dimensional case. Let $(R,\m)$ be a Cohen-Macaulay local ring of dimension one, admitting a fractional canonical ideal $K$. Hence, $K$ is an $R$-submodule of $\overline{R}$ such that $K \cong \rmK_R$ as an $R$-module and $R \subseteq K \subseteq \overline{R}$, where $\overline{R}$ denotes the integral closure of $R$ in the total ring $\rmQ(R)$ of fractions of $R$. The hypothesis about the existence of fractional canonical ideals $K$ is equivalent to saying that $R$ contains an $\m$-primary ideal $I$ such that $I \cong \rmK_R$ as an $R$-module and such that $I$ possesses a reduction $Q=(a)$ generated by a single element $a$ of $R$ (\cite[Corollary 2.8]{GMP}). The latter condition is satisfied, once $\rmQ(\widehat{R})$ is a Gorenstein ring and the field $R/\m$ is infinite. We have $\rmr_R(M) = \mu_R\left(\Hom_R(M,K)\right)$ for every MCM $R$-module $M$ (\cite[Satz 6.10]{HK}). See \cite{GMP, HK} for more details.

First of all, let us begin with the following review of a result of Brennan and Vasconcelos \cite{BV}. We include a brief proof.

\begin{prop}[{\cite[Propositions 2.1, 5.2]{BV}}]\label{2.3b} Let $I$ be a fractional ideal of $R$ and set $I_1 = K:I$. Then the following conditions are equivalent.
\begin{enumerate}
\item[$(1)$] $I:I = R$.
\item[$(2)$] $I_1{\cdot}I = K$. 
\item[$(3)$] $J{\cdot}I = K$ for some fractional ideal $J$ of $R$.
\item[$(4)$] $I/fI$ is a faithful $R/fR$-module for every parameter $f$ of $R$.
\item[$(5)$] $I/fI$ is a faithful $R/fR$-module for some parameter $f$ of $R$.
\end{enumerate}
\end{prop}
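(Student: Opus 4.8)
The plan is to prove the cycle of equivalences $(1)\Rightarrow(2)\Rightarrow(3)\Rightarrow(1)$ among the first three conditions (these are the algebraic statements about the fractional canonical ideal $K$), and then to close the loop with $(1)\Leftrightarrow(4)\Leftrightarrow(5)$, which tie these to faithfulness of $I/fI$ and hence connect directly to Theorem~\ref{pro1}(1) and Proposition~\ref{1.5.2}. Throughout I work inside $\rmQ(R)$, using that $I$ is a fractional ideal (so $I:I$ is a ring between $R$ and $\overline R$), and I use the fundamental duality property of the fractional canonical ideal: for any fractional ideal $J$ one has $K:(K:J)=J$ (canonical reflexivity in dimension one), and $\Hom_R(J,K)\cong K:J$ canonically.

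First I would do $(1)\Rightarrow(2)$. Apply $K:(-)$ to the inclusion $I_1 I\subseteq K$ — we always have $I_1 I=(K:I)\cdot I\subseteq K$. Taking canonical duals and using $K:(K:X)=X$, the containment $I_1 I\subseteq K$ dualizes; the content is to show equality. Here $I:I=R$ is used via the identity $I_1:I_1 = (K:I):(K:I)$, and one knows in general $K:(I_1 I) = (K:I_1):I$... more cleanly: $K:(I_1I)=I_1:(K:... )$. The standard manipulation is $K:(I_1 I)=(K:I_1):I = I:I$ wait — rather $K:(J\cdot I)=(K:I):J=I_1:J$ for any $J$, so $K:(I_1 I)=I_1:I_1$. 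But $I_1:I_1=(K:I):(K:I)\supseteq I:I=R$, and since $I_1:I_1$ is a ring contained in $\overline R$ with the reverse containment coming from reflexivity, one gets $I_1:I_1=R$, hence $K:(I_1 I)=R$, hence $I_1 I = K:R=K$. For $(2)\Rightarrow(3)$ take $J=I_1$. For $(3)\Rightarrow(1)$: from $JI=K$ one gets $J\subseteq K:I=I_1$ and $I\subseteq K:J$, and then $R\subseteq I:I$; conversely $a\in I:I$ means $aI\subseteq I$, so $aJI\subseteq JI=K$, i.e. $aK\subseteq K$, forcing $a\in K:K=R$ (since $K$ is a canonical ideal, $K:K=R$). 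Thus $I:I=R$.

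Next, $(1)\Leftrightarrow(4)\Leftrightarrow(5)$. The implication $(4)\Rightarrow(5)$ is trivial. For the passage to $I/fI$: for a parameter $f$ (a non-zerodivisor in $\m$), $\Ann_{R/fR}(I/fI)=((fI):_R I)/fR$, and $I/fI$ is faithful over $R/fR$ iff $(fI):_R I = fR$ iff $(fI:I)\cap R = fR$. Now $fI:I = f\cdot(I:I)$ inside $\rmQ(R)$, so the condition reads $f(I:I)\cap R=fR$, equivalently $(I:I)\cap f^{-1}R=R$; since $I:I\subseteq\overline R$ and $f$ is a non-zerodivisor this is equivalent to $I:I=R$ — here one uses that $I:I$ is a finitely generated $R$-submodule of $\overline R$ and $R$ is one-dimensional Cohen-Macaulay, so $I:I\ne R$ would contribute length in $(I:I)/R$ that survives after the intersection. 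This gives $(1)\Leftrightarrow(4)$ and, taking the existence of \emph{some} $f$, also $(5)\Rightarrow(1)$, completing the cycle.

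The main obstacle is the algebraic juggling in $(1)\Rightarrow(2)$: making the identities $K:(J I)=(K:I):J$ and $K:(K:X)=X$ precise and verifying that $I_1 I$ is genuinely reflexive (equivalently unmixed of the right dimension) so that the dual computation yields equality rather than mere containment. This is where one really invokes that $K$ is a \emph{canonical} module, not just any fractional ideal — the reflexivity $K:(K:X)=X$ and the identity $K:K=R$ both fail for general $K$. Once those two facts are cleanly in hand, the rest is bookkeeping with $\Hom_R(-,K)\cong K:(-)$ and the translation of "faithful $R/fR$-module" into a colon ideal, which are routine. I would state the reflexivity and $K:K=R$ facts up front (citing \cite{HK}) and then run the six-line argument above.
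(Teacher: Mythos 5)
Your argument is correct, and it diverges from the paper in an interesting way. For the equivalence of (1), (2), (3) you and the paper use essentially the same facts from Herzog--Kunz: the paper computes $K:I_1I=(K:I_1):I=I:I$ together with $K:K=R$ and biduality $K:(K:X)=X$, while you route the same computation through $K:(I_1I)=I_1:I_1=I:I$ and close with $(3)\Rightarrow(1)$ instead of $(3)\Rightarrow(2)$; these are interchangeable, and your explicit insistence that $I_1I$ is again a fractional ideal (so that biduality applies and the containment $I_1I\subseteq K$ upgrades to equality) is exactly the point the paper delegates to \cite[Bemerkung 2.5 a)]{HK}. The real difference is in (4) and (5): the paper observes that $I_1I=K$ is precisely the surjectivity of $t^I_K$ and then quotes Corollary \ref{1.4} (hence the idealization formula $\rmr(R\ltimes M)=\rmr_R(M)+\mu_R(\Coker t^M_{\rmK_R})$ and Proposition \ref{1.5.2}), which keeps the proof two lines long and ties the statement to the residual-faithfulness framework of Section 3; you instead prove $(1)\Leftrightarrow(4)\Leftrightarrow(5)$ by hand, translating faithfulness of $I/fI$ into $f(I:I)\cap R=fR$ and arguing inside $\rmQ(R)$, which is more elementary and independent of the canonical module of $R\ltimes I$. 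The one step you should spell out is the implication from a \emph{single} parameter $f$: from $(I:I)\ne R$ you need an element of $I:I\setminus R$ that $f$ multiplies into $R$, and this follows because $(I:I)/R$ is a nonzero module of finite length (it is finitely generated, torsion, hence supported at $\m$), so any nonzero socle element of $(I:I)/R$ lies in $\bigl[(I:I)\cap f^{-1}R\bigr]/R$ for every parameter $f$; your phrase about length ``surviving the intersection'' gestures at this but does not quite say it. The paper avoids this point entirely because the ``some parameter iff every parameter'' statement is already packaged in Proposition \ref{1.5.2}.
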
 

\begin{proof}
(1) $\Leftrightarrow$ (2) This follows from the facts that $K:I_1I = (K:I_1):I = I:I$, and that $K:K=R$. See \cite[Definition 2.4]{HK} and \cite[Bemerkung 2.5 a)]{HK}, respectively.

(3) $\Rightarrow$ (2) Since $JI=K$, we have $J \subseteq I_1 = K:I$, so that $K= JI \subseteq I_1I \subseteq K$, whence $I_1I=K$.

(2) $\Rightarrow$ (3) This is clear.

Since $I_1 \cong \Hom_R(I,K)$, the assertion that $I_1I = K$ is equivalent to saying that the homomorphism $t^I_K :\Hom_R(I,K) \otimes_R I \to K$ is surjective. Therefore, the equivalence between Assertions (1), (4), (5) are special cases of Corollary \ref{1.4} (see \cite[Proposition 5.2]{BV} also). 
\end{proof}

We say that a fractional ideal $I$ of $R$ is {\it closed}, if it satisfies the conditions stated in Proposition \ref{2.3b}. Thanks to Proposition \ref{2.3b} (3), we readily get the following.

\begin{corollary}[{\cite[Corollary 3.2]{BV}}]\label{2.4a}
If $R$ is a Gorenstein ring, then every closed ideal of $R$ is principal.
\end{corollary}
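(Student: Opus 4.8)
The plan is to reduce the statement to the elementary fact that an invertible fractional ideal over a local ring is principal. First I would record that when $R$ is Gorenstein the fractional canonical ideal $K$ is itself an invertible (indeed principal) fractional ideal: since $\rmK_R \cong R$, the $R$-module $K$ is free of rank one, so $K = xR$ for some non-zerodivisor $x \in \rmQ(R)$, and consequently $K{\cdot}(x^{-1}R) = R$. (One can even verify that $K = R$, using $1 \in K \subseteq \overline{R}$ and integrality of $x^{-1}$ over $R$, but this refinement is not needed for the argument.)

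Next, take a closed fractional ideal $I$ of $R$. By Proposition \ref{2.3b} (condition (3), equivalently the equivalence (1) $\Leftrightarrow$ (3)), there is a fractional ideal $J$ of $R$ with $J{\cdot}I = K$. Multiplying both sides by $x^{-1}R$ yields $(x^{-1}J){\cdot}I = R$, so that $I$ is an invertible fractional ideal of $R$. Here Gorensteinness is used only to guarantee that $K$ is invertible, after which the closedness of $I$ (in the form $J{\cdot}I = K$) upgrades at once to invertibility of $I$.

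Finally I would run the standard local argument: writing $1 = \sum_{k=1}^n a_k b_k$ with $a_k \in I$ and $b_k \in x^{-1}J$, since $R$ is local not every summand lies in $\m$, so some product is a unit, say $a_1 b_1 = u \in R^\times$. Then for every $y \in I$ one has $y b_1 \in (x^{-1}J){\cdot}I = R$, whence $y = a_1{\cdot}(y b_1){\cdot}u^{-1} \in a_1 R$; combined with $a_1 R \subseteq I$ this gives $I = a_1 R$, a principal ideal. There is no serious obstacle beyond being careful with the meaning of "fractional ideal" (each such ideal containing a non-zerodivisor, which is automatic once $J{\cdot}I = K$ contains the unit $1$); the one conceptual point worth stating explicitly is the passage, via multiplication by $K^{-1}$, from "$J{\cdot}I = K$" to genuine invertibility of $I$.
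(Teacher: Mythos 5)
Your proposal is correct and follows essentially the same route the paper has in mind: invoke Proposition \ref{2.3b} (3) to write $J{\cdot}I=K$, note that Gorensteinness makes $K$ a principal (invertible) fractional ideal, and conclude that $I$ is invertible, hence principal since $R$ is local. The extra care you take (writing out the standard local argument for invertible $\Rightarrow$ principal, and observing that one could even check $K=R$) is fine but adds nothing beyond what the paper's one-line justification already relies on.
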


Assertion (2) of the following also follows from Corollary \ref{gor}. Let us note a direct proof.

\begin{thm}\label{2.1}
Suppose that $R$ is a Gorenstein ring and let $I$ be an $\m$-primary ideal of $R$. Then the following assertions hold true.
\begin{enumerate}[$(1)$]
\item
$\rmr(R/I) \le \rmr_R(I) \le 1 + \rmr(R/I)$, 
\item $\rmr(R \ltimes I) = 1 + \rmr_R(I)$, if $\mu_R(I) > 1$.
\end{enumerate}
\end{thm}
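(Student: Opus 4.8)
The plan is to reduce everything to the Artinian case and to the short exact sequence relating $R/I$, $I/(a)$, and $R/(a)$, where $(a)$ is a minimal reduction of $I$ (available after possibly enlarging the residue field, which changes none of the invariants involved). First I would set $\overline{R} = R/(a)$, $\overline{I} = I/aI$, and observe that $\overline{R}$ is a zero-dimensional Gorenstein local ring; this is the standard device already used in the proofs of Example \ref{2.3.1} and Proposition \ref{2.3.2}. For assertion $(1)$, consider the exact sequence $0 \to R/(I:a) \to I/aI \to I/(a) \to 0$ induced by multiplication by $a$; since $a$ is a nonzerodivisor and a reduction of $I$, one has $a^{n}\in I^{n+1}\subseteq (a)I^{n}$ for $n\gg 0$, and from $I:a$ one extracts that $R/(I:a)$ is either $0$ or cyclic. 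Applying $\Hom_{\overline{R}}(\overline{R}/\overline{\m}, -)$ and using that $\overline{R}$ is Gorenstein (so the functor is well behaved on the relevant modules) gives $\rmr(R/I)\le \rmr_R(I)$ from the surjection $I/aI \twoheadrightarrow I/(a)$ reading off socles after noting $\rmr_R(I) = \rmr_{\overline R}(\overline I)$, while the left-hand cyclic term contributes at most $1$ to the socle, yielding $\rmr_R(I)\le 1 + \rmr(R/I)$.

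For assertion $(2)$, the key point is to invoke Theorem \ref{pro1}(2): it suffices to show that $(\fkq:_R\m)I = \fkq I$ for the parameter ideal $\fkq = (a)$, equivalently $(a:_R\m)I\subseteq aI$. Here I would use that $R$ is Gorenstein of dimension one, so $a:_R\m = (a:_R\m)$ is generated over $(a)$ by a single element $b$ with $\m b\subseteq (a)$, i.e. $b/a$ generates the socle of $\overline R$. The claim then becomes $bI\subseteq aI$, i.e. $(b/a)\overline{I} = 0$ in $\overline{I}$; since $(b/a)$ annihilates $\overline{\m}$ and $\overline{I}$ is a faithful $\overline R$-module precisely when $I$ is closed, the hypothesis $\mu_R(I)>1$ must be used to rule out the closed case. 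Indeed, if $I$ were closed then by Corollary \ref{2.4a} (or Proposition \ref{2.3b}) $I$ would be principal, contradicting $\mu_R(I)>1$; hence $I$ is not closed, so $\overline{I}$ is not faithful over $\overline R$, which for a zero-dimensional Gorenstein ring forces $\Ann_{\overline R}\overline I \supseteq \soc(\overline R) = (b/a)$, giving exactly $bI\subseteq aI$. Then Theorem \ref{pro1}(2) yields $\rmr(R\ltimes I) = \rmr(R) + \rmr_R(I) = 1 + \rmr_R(I)$.

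The main obstacle I anticipate is the careful bookkeeping in assertion $(1)$: verifying that the socle length is additive (or at least sub/super-additive in the right direction) along the sequence $0 \to R/(I:a) \to I/aI \to I/(a) \to 0$ requires checking that the sequence stays exact after applying $\Hom_{\overline R}(k,-)$, or equivalently controlling the connecting map into $\Ext^1$. Over a zero-dimensional Gorenstein ring this is manageable because $k$ has a self-dual role, but one must be slightly attentive about whether the left term's socle injects; the inequality $\rmr(R/I)\le \rmr_R(I)$ comes from the right-exactness direction and is unconditional, while $\rmr_R(I)\le 1+\rmr(R/I)$ uses that the kernel term is cyclic. Assertion $(2)$ is then essentially immediate once the non-closedness of $I$ is extracted from $\mu_R(I)>1$ via Corollary \ref{2.4a}, so the Gorenstein hypothesis is doing real work in two places: making $\soc(\overline R)$ one-dimensional, and making "not principal $\Rightarrow$ not closed" available.
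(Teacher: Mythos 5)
Your argument for assertion (2) is correct and is essentially the paper's: both reduce the claim to Theorem \ref{pro1}(2), both use Corollary \ref{2.4a} to see that $\mu_R(I)>1$ forces $I$ not to be closed, and both then exploit that $\rmr(R)=1$ makes the relevant socle one-dimensional. The paper phrases the last step as $R:\m\subseteq I:I$, hence $bI\subseteq aI$ for $(a):_R\m=(a,b)$; you phrase it as $\Ann_{R/(a)}(I/aI)\neq(0)$, hence it contains $\soc(R/(a))$. These are the same computation, and your appeal to Proposition \ref{2.3b}/Corollary \ref{1.4} to pass from ``not closed'' to ``$I/aI$ not faithful'' is legitimate.

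Assertion (1), however, is not proved as written. First, for $a\in I$ a nonzerodivisor the kernel of $R\to I/aI$, $r\mapsto ra$, is $aI:_Ra=I$, so the left-hand term of your sequence is $R/I$; your label $R/(I:a)$ is wrong (indeed $I:_Ra=R$ since $a\in I$, so your left term would be $(0)$), and the auxiliary claim $a^n\in I^{n+1}$ is false and plays no role. More seriously, the socle bookkeeping is attached to the wrong ends of the sequence: the inequality $\rmr(R/I)\le \rmr_R(I)$ comes from the \emph{injection} $R/I\hookrightarrow I/aI$ (socles embed under inclusions because $\Hom_R(R/\m,-)$ is left exact), not from the surjection onto $I/(a)$; and the ``$+1$'' does not come from the left-hand term being cyclic --- the cyclic module $R/I$ has socle of length exactly $\rmr(R/I)$, so ``cyclic hence contributes at most $1$'' is false reasoning. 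The correct source of the ``$+1$'' is the right-hand term: $I/(a)$ is a (possibly zero) ideal of the Artinian Gorenstein ring $R/(a)$, whose socle is simple and lies in every nonzero ideal, so $\ell_R\bigl(\soc(I/(a))\bigr)\le 1$; left exactness of $\Hom_R(R/\m,-)$ then gives $\rmr_R(I)=\ell_R\bigl(\soc(I/aI)\bigr)\le \rmr(R/I)+1$. With these repairs your reduction-mod-$a$ argument does prove (1) and is a genuine alternative to the paper's proof, which instead dualizes $0\to I\to R\to R/I\to 0$ into $R$, obtains $0\to R\to \Hom_R(I,R)\to \Ext_R^1(R/I,R)\to 0$, and reads off $\rmr_R(I)=\mu_R(\Hom_R(I,R))$ and $\rmr(R/I)=\mu_R(\Ext_R^1(R/I,R))$; but as submitted, neither inequality in (1) is justified.
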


\begin{proof}  Take the $R$-dual of the canonical exact sequence 
$$0 \to I \to R \to R/I \to 0$$
of $R$-modules and we get the exact sequence
$$0 \to R \to \Hom_R(I,R) \to \Ext_R^1(R/I,R) \to 0.$$ Hence, $\rmr(R/I) \le \rmr_R(I) \le 1 + \rmr(R/I)$, because $$\rmr_R(I) = \mu_R(\Hom_R(I,R))\ \ \text{and}\ \  \rmr(R/I) = \mu_R(\Ext_R^1(R/I,R))$$
(\cite[Satz 6.10]{HK}). To see the second assertion, suppose that $\mu_R(I) > 1$. Let $\fkq =(a)$ be a parameter ideal of $R$ and set $J = \fkq :_R\m$. Let us write $J = (a,b)$. We then have $J = \fkq  : \m$, and $\m J = \m \fkq $ by \cite{CP}, because $R$ is not a DVR. On the other hand,  by Corollary \ref{2.4a} we have  $R \subsetneq I : I$, since $R$ is a Gorenstein ring and $I$ is not principal. Consequently
$$R \subseteq R : \m \subseteq I : I,$$
since $\ell_R([R:\m]/R)=1$. Therefore, $\frac{b}{a} \in I:I$, because $$R:\m = \frac{1}{a}{\cdot}[\fkq : \m] = \frac{1}{a}{\cdot}(a,b) =R + R\frac{b}{a}.$$ Thus $bI \subseteq aI$, which shows $(\fkq :_R\m) I =(a,b)I \subseteq \fkq I$, so that $$\rmr(R \ltimes I) = \rmr(R) + \rmr_R(I) = 1 + \rmr_R(I)$$ by Theorem \ref{pro1} (2).
\end{proof}

\begin{remark}\label{2.2}
In Theorem \ref{2.1} (1), the equality $\rmr_R(I) =1+ \rmr(R/I)$ does not necessarily hold true. For instance, consider the ideal  $I = (t^8,t^9)$ in the Gorenstein local ring $R=k[[t^4,t^5,t^6]]$. Then $\rmr(R/I) = 2$. Because $t^{-4} \in R : I$, we have $1 \in \m{\cdot}[R : I]$, which shows, identifying $R:I = \Hom_R(I,R)$ in the proof of Assertion (2) of Theorem \ref{2.1},  that $\mu_R(\Hom_R(I, R)) = \mu_R(\Ext_R^1(R/I,R))$. Hence $\rmr_R(I) = \rmr(R/I) = 2$, while  $\rmr(R \ltimes I) = 3$ by Theorem \ref{2.1} (2).
\end{remark}

We however have $\rmr_R(I) =1+ \rmr(R/I)$ for trace ideals $I$, as we show in the following. Let $I$ be an ideal of $R$. Then $I$ is said to be a {\it trace ideal} of $R$, if $$I = \Im \left(\Hom_R(M,R)\otimes_RM \overset{t^M_R}{\to} R\right)$$ for some $R$-module $M$. When $I$ contains a non-zerodivisor of $R$, $I$ is a trace ideal of $R$ if and only if $R:I=I:I$ (see \cite[Lemma 2.3]{L}). Therefore, $\m$-primary trace ideals are not principal.

\begin{prop}\label{2.3}
Suppose that  $R$ is a Gorenstein ring. Let $I$ be an $\m$-primary trace ideal of $R$. Then $\rmr_R(I) = 1+ \rmr(R/I)$ and $\rmr(R\ltimes I)= 2 + \rmr(R/I)$.
\end{prop}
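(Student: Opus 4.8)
The plan is to reduce everything to Theorem \ref{2.1} together with a sharpening of its inequality in Theorem \ref{2.1}~(1), using the hypothesis that $I$ is a trace ideal. Since $R$ is Gorenstein of dimension one, we may identify $\Hom_R(I,R)=R:I$ inside $\rmQ(R)$, and the canonical exact sequence $0\to I\to R\to R/I\to 0$ dualizes to $0\to R\to R:I\to \Ext_R^1(R/I,R)\to 0$, so that $\rmr_R(I)=\mu_R(R:I)$ and $\rmr(R/I)=\mu_R(\Ext_R^1(R/I,R))$. The inequality $\rmr_R(I)\le 1+\rmr(R/I)$ from Theorem \ref{2.1}~(1) is an equality precisely when the map $R:I\to \Ext_R^1(R/I,R)$ is minimal, i.e.\ when $R\subseteq\m{\cdot}(R:I)$, equivalently when $1\notin$ the image of no splitting — concretely, when $(R:I)$ has no free direct summand generated by $1$, which happens iff $1\in\m(R:I)$. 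So the whole task is to show $1\in\m(R:I)$, i.e.\ that $R$ is \emph{not} a direct summand of the $R$-module $R:I$.

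First I would invoke the trace-ideal characterization: since $I$ is $\m$-primary it contains a non-zerodivisor, so by \cite[Lemma 2.3]{L} being a trace ideal is equivalent to $R:I=I:I$. Now suppose for contradiction that $\rmr_R(I)=\rmr(R/I)$, i.e.\ that the dual map $R:I\to\Ext_R^1(R/I,R)$ is split surjective with a free summand $R\cdot 1$; equivalently $1\notin\m(R:I)$, so $R:I=R+\m(R:I)$ forces (by Nakayama, reversing) nothing directly, but rather it means $R:I$ has $R$ as a direct summand, say $R:I=R\oplus N$ as $R$-modules. Since $R:I=I:I$ is a ring (a subring of $\rmQ(R)$ containing $R$), and $R$ is Gorenstein hence reduced in the relevant sense — more to the point, $I:I$ is a module-finite birational extension of $R$. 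The key point is that a birational extension $B$ of a one-dimensional Cohen--Macaulay local ring $R$ with $R$ a direct summand of $B$ as an $R$-module must equal $R$: indeed if $B=R\oplus N$ then the $R$-bilinear multiplication on $B$ restricted to $N\times N$ lands in $B$, but since $B\subseteq\rmQ(R)$ and $B$ is a ring containing $1$, writing $1=1+0$ shows the projection $B\to R$ is a ring homomorphism that is the identity on $R$, hence $B=R$. Therefore $R:I=I:I=R$, which by Proposition \ref{2.3b} means $I$ is a closed ideal, and by Corollary \ref{2.4a} (or Corollary \ref{2.4a} applied in the Gorenstein case) $I$ is principal; but $\m$-primary trace ideals are not principal, as noted in the paragraph preceding the statement. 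This contradiction gives $\rmr_R(I)=1+\rmr(R/I)$.

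Finally, for the idealization statement: since $I$ is a trace ideal that is $\m$-primary it is not principal, so $\mu_R(I)>1$, and Theorem \ref{2.1}~(2) gives $\rmr(R\ltimes I)=1+\rmr_R(I)=1+(1+\rmr(R/I))=2+\rmr(R/I)$, as claimed. Alternatively one can feed $\rmr_R(I)=1+\rmr(R/I)$ directly into $\rmr(R\ltimes I)=\rmr(R)+\rmr_R(I)=1+\rmr_R(I)$, the first equality holding by Theorem \ref{2.1}~(2) since $\mu_R(I)>1$.

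The main obstacle I expect is the lemma that a module-finite birational extension $B$ of a one-dimensional Cohen--Macaulay local ring $R$ cannot have $R$ as a proper direct summand: one must phrase carefully why the module-direct-sum decomposition $B=R\oplus N$ forces $N$ to be killed by the ring structure. The cleanest route is probably to avoid this lemma entirely and argue at the level of traces: if $R:I=I:I$ had a free summand $R\cdot 1$, then $1\notin\m(I:I)$, but $I:I$ is the endomorphism ring $\End_R(I)$ and the reflexive/trace condition $\Im t^I_R=I$ combined with $I\subseteq\m$ being $\m$-primary forces $(I:I){\cdot}I\subseteq I\subsetneq R$, so no element of $I:I$ outside $R$ can be a unit times $1$ — and then minimality of the presentation $0\to R\to R:I\to\Ext^1_R(R/I,R)\to 0$ follows. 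I would present whichever of these two arguments is shortest after checking the details; the rest is bookkeeping with results already in the paper.
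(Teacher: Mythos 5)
Your reduction to Theorem \ref{2.1} is the right idea, but you have the key dichotomy exactly backwards, and this breaks the core of the argument. From the dualized sequence $0 \to R \to R:I \to \Ext_R^1(R/I,R) \to 0$ (where $1\in R$ maps to $1\in R:I$), counting minimal generators gives: $\rmr_R(I)=1+\rmr(R/I)$ if and only if $1\notin \m{\cdot}(R:I)$, while $\rmr_R(I)=\rmr(R/I)$ if and only if $1\in \m{\cdot}(R:I)$. So the task is to show $1\notin\m{\cdot}(R:I)$, not, as you state, $1\in\m{\cdot}(R:I)$ (which is in fact false for a trace ideal, and would prove the wrong equality); compare Remark \ref{2.2} of the paper, where $t^{-4}\in R:I$ gives $1\in\m{\cdot}(R:I)$ and consequently $\rmr_R(I)=\rmr(R/I)$. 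Your contradiction argument inherits the flip: from the assumption $\rmr_R(I)=\rmr(R/I)$ one gets $1\in\m{\cdot}(R:I)$, not $1\notin\m{\cdot}(R:I)$, so you cannot pass to ``$R$ is a direct summand of $R:I$,'' and the whole chain through the birational-summand lemma, Proposition \ref{2.3b} and Corollary \ref{2.4a} never gets started. There is a second, independent gap: even granted $1\notin\m{\cdot}(R:I)$, this only says $1$ is a minimal generator of $R:I$; it does not make $R{\cdot}1$ a direct summand (e.g.\ $R=k[[t^2,t^3]]$ and $B=\overline{R}=k[[t]]$ have $1\notin\m B$, yet $R$ is not a summand of $B$ since $B/R\cong k$ is torsion while $B$ is torsion-free). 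Your proof of the summand lemma itself (``the projection is a ring homomorphism'') is also unjustified, although the lemma happens to be true by a rank/torsion count.

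The correct argument is much shorter and is what the paper does: since $I$ is an $\m$-primary trace ideal, $R:I=I:I$ by \cite[Lemma 2.3]{L}, and $I:I$ is a module-finite ring extension of $R$ inside $\overline{R}$; if $1\in\m{\cdot}(I:I)$ then $\m{\cdot}(I:I)=I:I$, contradicting Nakayama. Hence $1\notin\m{\cdot}(R:I)$, and the generator count in the dualized sequence gives $\rmr_R(I)=1+\rmr(R/I)$. Your second half is fine and agrees with the paper: $\m$-primary trace ideals are not principal, so $\mu_R(I)>1$ and Theorem \ref{2.1}~(2) yields $\rmr(R\ltimes I)=1+\rmr_R(I)=2+\rmr(R/I)$.
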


\begin{proof} We have $1 \not\in \m {\cdot} [R:I]$, since $R:I = I : I \subseteq \overline{R}$. Therefore, thanks to the proof of Assertion (2) in Theorem \ref{2.1}, $\rmr_R(I) = 1 + \rmr(R/I)$, so that $\rmr(R\ltimes I) = 2 + \rmr(R/I)$ by Theorem \ref{2.1} (2).
\end{proof}

\begin{Example}[{\cite[Example 3.12]{GIK}}]\label{2.11}
Let $R=k[[t^4, t^5, t^6]]$. Then $R$ is a Gorenstein ring and $$R, \ (t^{8}, t^{9}, t^{10}, t^{11}),\  (t^{6}, t^{8}, t^{9}),\  (t^{5}, t^{6}, t^{8}),\  (t^{4}, t^{5}, t^{6}), \ \left\{ I_a=(t^{4}-at^5, t^{6})\right\}_{a \in k}$$ are all the non-zero trace ideals of $R$. We have $I_a=I_b$, only if $a= b$.
\end{Example}

\begin{prop}\label{typem}
Suppose that $R$ is a not a $\mathrm{DVR}$. Then $\m$ is a trace ideal of $R$ with $\rmr_R(\m) = \rmr(R) +1$ and $\rmr(R \ltimes \m) = 2{\cdot}\rmr(R) + 1$. 
\end{prop}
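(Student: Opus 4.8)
The plan is to verify the three numerical assertions—that $\m$ is a trace ideal, that $\rmr_R(\m) = \rmr(R) + 1$, and that $\rmr(R \ltimes \m) = 2\rmr(R) + 1$—in that order, since each builds on the previous one together with the machinery already developed in Sections~2 and~6. Throughout I would work under the standing hypotheses of Section~6, so that $R$ admits a fractional canonical ideal $K$ with $R \subseteq K \subseteq \overline{R}$; after enlarging the residue field if necessary (which changes none of the invariants in question) I may also assume $\m$ has a principal reduction.

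\textbf{Step 1: $\m$ is a trace ideal.} Since $R$ is not a DVR, $\m$ contains a non-zerodivisor, so by \cite[Lemma 2.3]{L} it suffices to check $R : \m = \m : \m$. The inclusion $\m : \m \subseteq R : \m$ is automatic. For the reverse, recall that $\m : \m$ is a finite birational extension of $R$; the nontrivial point is that every element of $R : \m$ already multiplies $\m$ into $\m$ rather than merely into $R$. This is classical: if $x \in R:\m$ but $x \notin \m:\m$, then $x\m \not\subseteq \m$, so $x\m = R$, forcing $\m$ to be principal, i.e. $R$ a DVR, a contradiction. Hence $R:\m = \m:\m$ and $\m$ is a trace ideal.

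\textbf{Step 2: $\rmr_R(\m) = \rmr(R) + 1$.} Dualize the canonical sequence $0 \to \m \to R \to R/\m \to 0$ into $K$. Because $\mathrm{Hom}_R(R/\m, K) = (0)$ (as $K$ is MCM, hence $\m$ contains a nonzerodivisor on $K$) and $\mathrm{Ext}^1_R(R/\m, K) \cong k^{\,\rmr(R)}$ by the definition of Cohen-Macaulay type together with $K \cong \rmK_R$, the long exact sequence gives
$$
0 \to K \to \mathrm{Hom}_R(\m, K) \to k^{\,\rmr(R)} \to 0 .
$$
Thus $\rmr_R(\m) = \mu_R(\mathrm{Hom}_R(\m,K))$, and I must show this minimal number of generators equals $\mu_R(K) + \rmr(R) = 1 + \rmr(R)$, i.e. that the surjection $\mathrm{Hom}_R(\m,K) \to k^{\,\rmr(R)}$ induces an isomorphism after tensoring with $k$. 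Equivalently, the image of $K = \mathrm{Hom}_R(R,K)$ lies inside $\m \cdot \mathrm{Hom}_R(\m, K)$. Identifying $\mathrm{Hom}_R(\m, K) = K : \m$ inside $\rmQ(R)$, the map $K \hookrightarrow K:\m$ is the inclusion, and the claim is that $K \subseteq \m\cdot(K:\m)$. Since $1 \in K$, it is enough that $1 \in \m(K:\m)$; but $K:\m \supseteq \m:\m \supsetneq R$ because $R$ is not a DVR, and any $u \in (K:\m)\setminus R$ has $\m u \subseteq K$ with $\m u \not\subseteq \m$-part only if... — the clean way is: $R:\m = \m:\m \subseteq \overline R$, so $1 \notin \m(R:\m)$ would be the obstruction, and here we instead use $K:\m$, which strictly contains $K$, so that $\m(K:\m) \supseteq \m\cdot(\text{something properly bigger than }K) \ni 1$. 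I expect this comparison—showing $1 \in \m(K:\m)$—to be the main obstacle, and the argument should parallel the reasoning in the proof of Theorem~\ref{2.1}(2) and Proposition~\ref{2.3}, where exactly the relation between $\m\cdot[R:I]$ and the socle is analyzed.

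\textbf{Step 3: $\rmr(R \ltimes \m) = 2\rmr(R) + 1$.} With $\rmr_R(\m) = \rmr(R) + 1$ in hand, it remains to show $\rmr(R \ltimes \m) = \rmr(R) + \rmr_R(\m)$, i.e. that $\m$ realizes the upper extreme of Theorem~\ref{pro1}. By Theorem~\ref{pro1}(2) this is equivalent to $(\fkq :_R \m)\m = \fkq \m$ for some parameter ideal $\fkq = (a)$. Writing $\fkq :_R \m = (a, b)$, one has $\frac{b}{a} \in R:\m = \m:\m$ (using Step~1, or directly \cite{CP}), hence $b\m \subseteq a\m$, giving $(a,b)\m \subseteq \fkq\m$ and so equality. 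Combining, $\rmr(R \ltimes \m) = \rmr(R) + (\rmr(R)+1) = 2\rmr(R)+1$, completing the proof.
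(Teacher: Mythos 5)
Your Step 1 is correct (and is the same argument the paper uses), and the idea of Step 3 is also sound, except for one cosmetic slip: when $R$ is not Gorenstein the ideal $\fkq:_R\m$ is not $2$-generated, so you cannot write it as $(a,b)$; but the argument survives because every element $b\in\fkq:_R\m\subseteq (a):\m=a(R:\m)$ satisfies $b/a\in R:\m=\m:\m$, whence $(\fkq:_R\m)\m\subseteq \fkq\m$ (alternatively one quotes \cite{CP}, as the paper does).

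The genuine gap is in Step 2, which is the heart of the proposition, and there the proposal goes wrong in a way that cannot be repaired as stated. First, the two numerical inputs are interchanged: $\Ext^1_R(R/\m,K)\cong R/\m$ has length one (the type is computed by $\Ext$ into $R$, not into the canonical module), while $\mu_R(K)=\rmr(R)$, not $1$. So the correct sequence is $0\to K\to K:\m\to R/\m\to 0$, giving only the estimate $\rmr_R(\m)=\mu_R(K:\m)\le \mu_R(K)+1=\rmr(R)+1$. Second, and more seriously, the condition you isolate as "the main obstacle", namely $K\subseteq \m\,(K:\m)$ (equivalently $1\in\m(K:\m)$), is false: since $\m(K:\m)\subseteq K$, it would force $K=\m(K:\m)$, and then taking $K$-duals and using $K:(K:\m)=\m$ one gets $R=K:K=K:\m(K:\m)=[K:(K:\m)]:\m=\m:\m$, contradicting $\m:\m=R:\m\supsetneq R$. (It would also make $\mu_R(K:\m)=\ell_R((K:\m)/K)=1$, i.e. $\rmr_R(\m)=1$, the opposite of what you want.) What equality in the estimate actually requires is the reverse phenomenon: the inclusion $K\hookrightarrow K:\m$ must remain injective after tensoring with $R/\m$, i.e. one needs $\m(K:\m)=\m K$, so that $\mu_R(K:\m)=\ell_R\bigl((K:\m)/\m K\bigr)=1+\mu_R(K)$. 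This is exactly what the paper proves, by the colon computation $K:\m(K:\m)=[K:(K:\m)]:\m=\m:\m$ and $K:\m K=(K:K):\m=R:\m$, together with $\m:\m=R:\m$ from Step 1 and the duality $K:(K:X)=X$. Your heuristic that $\m$ times "something properly bigger than $K$" should contain $1$ has no force, so as written Step 2 both asserts false intermediate facts and leaves the key point unproved.
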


\begin{proof}  We have $\m :\m = R : \m$, because $R$ is not a DVR, whence $\m$ is a trace ideal of $R$. We take the $K$-dual of the sequence $0 \to \m \to R \to R/\m \to0$ and consider the resulting exact sequence
$$0 \to K \to K : \m \to \Ext_R^1(R/\m,K) \to 0.$$ Then, since $\Ext_R^1(R/\m,K)\cong R/\m$, we get $$\rmr_R(\m) = \mu_R(K:\m) \le \mu_R(K)+1 = \rmr(R)+1.$$ We actually have the equality in the estimation $$\mu_R(K:\m) \le \mu_R(K)+1.$$ To see this, it is enough to show that $\m (K:\m) = \m K$. We have $$K : \m(K:\m) = [K:(K:\m)]:\m =\m :\m$$ and $$K:\m K = (K:K):\m = R :\m.$$ Therefore, since $\m :\m = R : \m$, we get $K : \m(K:\m) = K : \m K$, so that $\m (K:\m) = \m K$. Hence $\rmr_R(\m) = \mu_R(K:\m) = \mu_R(K)+1 = \rmr(R)+1$ as wanted. We have $\rmr(R \ltimes \m) = \rmr(R) + \rmr_R(\m)$ by Theorem \ref{pro1} (2), because $(\fkq:_R\m){\cdot} \m = \fkq{\cdot} \m$ for every parameter ideal $\fkq$ of $R$ (\cite{CP}; see Theorem \ref{1.5.1} also), whence the second assertion follows. 
\end{proof}

\begin{corollary}\label{2.3a}
Let $R$ be a Gorenstein ring which is not a $\mathrm{DVR}$. Then $R \ltimes \m$ is an almost Gorenstein ring in the sense of \cite{GMP}, possessing $\rmr(R\ltimes \m) = 3$.
\end{corollary}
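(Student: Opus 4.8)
The plan is to deduce the numerical value from results already in hand and then to exhibit by hand the short exact sequence that characterizes the almost Gorenstein property. Since $R$ is Gorenstein we have $\rmr(R)=1$, so Proposition \ref{typem} gives $\rmr(R\ltimes\m)=2{\cdot}\rmr(R)+1=3$ immediately. Thus it remains to show that $A:=R\ltimes\m$, which is a Cohen-Macaulay local ring of dimension one admitting a canonical module, is almost Gorenstein in the sense of \cite{GMP}; recalling the definition in dimension one, this means producing an exact sequence $0\to A\to\rmK_A\to C\to 0$ of $A$-modules in which $C$ is either zero or a nonzero module of finite length with $\n{\cdot}C=(0)$, where $\n$ denotes the maximal ideal of $A$.

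First I would identify $\rmK_A$. As $R$ is Gorenstein we may take $\rmK_R=R$, and then the computation carried out in the proof of Theorem \ref{1.5.3} (applied with $M=\m$) shows that $\rmK_A=(R:\m)\times R$ as an $R$-module, with $A$-action $(a,m)\circ(f,x)=(af,\,fm+ax)$ for $(a,m)\in A$ and $(f,x)\in(R:\m)\times R$ (note $fm\in R$, since $f\in R:\m$ and $m\in\m$). Because $R\subseteq R:\m$ and $\m\subseteq R$, the assignment $(a,m)\mapsto(a,m)$ defines an injective $A$-linear map $\iota\colon A\hookrightarrow\rmK_A$; the verification of $A$-linearity is just the identity $(ab,\,an+bm)=(a,m)\circ(b,n)$. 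Put $C=\operatorname{Coker}\iota$. As an $R$-module one has $C\cong (R:\m)/R\oplus R/\m$, so $\ell_R(C)=\rmr(R)+1=2$; in particular $C\ne(0)$ has finite length.

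The crux is the computation $\n{\cdot}C=(0)$, that is, $\n{\cdot}\rmK_A\subseteq\iota(A)=R\times\m$. Here the hypothesis that $R$ is not a $\mathrm{DVR}$ enters: then $\m$ is not principal, hence $R:\m=\m:\m$, and therefore $\m(R:\m)=\m(\m:\m)\subseteq\m$. Given $(\xi,\eta)\in\n=\m\times\m$ and $(f,x)\in(R:\m)\times R$, one computes $(\xi,\eta)\circ(f,x)=(\xi f,\,f\eta+\xi x)$ with $\xi f\in\m(R:\m)\subseteq\m\subseteq R$ and $f\eta+\xi x\in\m(R:\m)+\m\subseteq\m$; hence $\n{\cdot}\rmK_A\subseteq R\times\m=\iota(A)$. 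Consequently $C$ is a nonzero vector space over $A/\n$ (in particular an Ulrich $A$-module), and the exact sequence $0\to A\xrightarrow{\iota}\rmK_A\to C\to 0$ witnesses that $A$ is almost Gorenstein.

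I expect no serious obstacle: the argument uses only Proposition \ref{typem}, the canonical-module computation from the proof of Theorem \ref{1.5.3}, and the definition of almost Gorenstein rings from \cite{GMP}. The one place requiring a little care is the passage ``$R$ not a $\mathrm{DVR}\Rightarrow R:\m=\m:\m$'' (equivalently, that every $\m$-primary ideal not contained in $\m$ is invertible, hence principal) together with the attendant containment $\m(R:\m)\subseteq\m$; and, as always with idealizations, keeping the two module structures on $\rmK_A$ straight.
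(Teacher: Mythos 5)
Your proposal is correct, and on the numerical part it coincides with the paper: both of you read off $\rmr(R\ltimes\m)=2{\cdot}\rmr(R)+1=3$ from Proposition \ref{typem}, using $\rmr(R)=1$. For the almost Gorenstein assertion you take a genuinely different route: the paper disposes of it by citing \cite[Theorem 6.5]{GMP}, whereas you verify it directly, using the explicit description $\rmK_A=\Hom_R(\m,\rmK_R)\times \rmK_R=(R:\m)\times R$ with its $A$-action from the proof of Theorem \ref{1.5.3}, the $A$-linear embedding $\iota\colon A\hookrightarrow \rmK_A$, $(a,m)\mapsto (a,m)$, and the equality $R:\m=\m:\m$ (valid because $R$ is not a DVR, and used by the paper itself in the proof of Proposition \ref{typem}) to conclude $\n{\cdot}\rmK_A\subseteq \m\times\m\subseteq R\times\m=\iota(A)$; hence the cokernel $C\cong (R:\m)/R\oplus R/\m$ is a nonzero $A/\n$-vector space of length $\rmr(R)+1=2$, and the sequence $0\to A\to \rmK_A\to C\to 0$ witnesses the almost Gorenstein property. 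This is in effect a self-contained reproof of the relevant case of \cite[Theorem 6.5]{GMP}: what your argument buys is independence from that citation plus an explicit Ulrich cokernel, while the paper's citation is shorter and rests on the more general statement in \cite{GMP}. Two minor remarks: you implicitly invoke the standard equivalence, in dimension one, between the original definition of \cite{GMP} and the exact-sequence characterization with $\n C=(0)$ (this is legitimate and is the formulation the paper works with throughout); and your parenthetical aside about ``every $\m$-primary ideal not contained in $\m$'' is garbled, though the step it refers to is argued correctly --- if $f\in R:\m$ and $f\m\not\subseteq\m$, then $f\m=R$, forcing $\m$ to be principal and $R$ a DVR.
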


\begin{proof}
See \cite[Theorem 6.5]{GMP} for the assertion that $R \ltimes \m$ is an almost Gorenstein ring.
\end{proof}

Let us give one more result on closed ideals.

\begin{proposition}\label{2.5}
Let $I \subsetneq R$ be a closed ideal of $R$ and set $I_1 =K : I$.  Then $\rmr(R/I) = \mu_R(I_1) = \rmr_R(I)$.
\end{proposition}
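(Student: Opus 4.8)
The statement has three quantities to identify: $\rmr(R/I)$, $\mu_R(I_1)$ where $I_1 = K:I$, and $\rmr_R(I)$. I would prove the chain of equalities in two moves: first $\mu_R(I_1) = \rmr_R(I)$, which is essentially immediate from the general theory, and then $\rmr(R/I) = \mu_R(I_1)$, which is where the closedness hypothesis has to be used. So the real content is the second equality.

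\textbf{Step 1: $\mu_R(I_1) = \rmr_R(I)$.} Since $I$ contains a non-zerodivisor, $I_1 = K : I \cong \Hom_R(I, K)$ as $R$-modules, and as recalled in Section 6 (\cite[Satz 6.10]{HK}) one has $\rmr_R(M) = \mu_R(\Hom_R(M,K))$ for every MCM $R$-module $M$. Applying this with $M = I$ gives $\rmr_R(I) = \mu_R(\Hom_R(I,K)) = \mu_R(I_1)$. This is a one-line argument.

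\textbf{Step 2: $\rmr(R/I) = \mu_R(I_1)$.} Here I would take the $K$-dual of the canonical exact sequence $0 \to I \to R \to R/I \to 0$. Since $K$ is a canonical ideal, $\Hom_R(R,K) = K$, $\Hom_R(I,K) = K:I = I_1$, $\Hom_R(R/I,K) = 0$ (as $R/I$ has dimension $0$ and $K$ is MCM of dimension $1$), and $\Ext^1_R(R/I, K)$ is the canonical module of the Artinian ring $R/I$, so $\mu_R(\Ext^1_R(R/I,K)) = \rmr(R/I)$. The long exact sequence therefore reads
$$
0 \to K \to I_1 \to \Ext^1_R(R/I,K) \to \Ext^1_R(R,K) = 0,
$$
i.e. $0 \to K \to I_1 \to \Ext^1_R(R/I,K) \to 0$. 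In general this only yields $\mu_R(I_1) \le \mu_R(K) + \rmr(R/I) = \rmr(R) + \rmr(R/I)$, which is not what we want; to collapse it to an equality $\mu_R(I_1) = \rmr(R/I)$ I must show that the inclusion $K \hookrightarrow I_1$ lands inside $\m I_1$, equivalently $K \subseteq \m I_1$. This is the step where closedness enters. Since $I$ is closed, $I_1 I = K$ by Proposition \ref{2.3b}(2). Because $I \subsetneq R$ is $\m$-primary, $I \subseteq \m$, hence $K = I_1 I \subseteq I_1 \m = \m I_1$. Therefore $K \subseteq \m I_1$, so applying $\otimes_R R/\m$ to $0 \to K \to I_1 \to \Ext^1_R(R/I,K)\to 0$ gives an isomorphism $I_1/\m I_1 \xrightarrow{\sim} \Ext^1_R(R/I,K)/\m\,\Ext^1_R(R/I,K)$, i.e. $\mu_R(I_1) = \mu_R(\Ext^1_R(R/I,K)) = \rmr(R/I)$.

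\textbf{Main obstacle.} The only subtlety is Step 2, and within it the key point $K \subseteq \m I_1$: one must be careful that $I$ being a \emph{proper} $\m$-primary ideal forces $I \subseteq \m$ (this uses $I \subsetneq R$, as in the hypothesis), and then invoke the closedness equality $I_1 I = K$ from Proposition \ref{2.3b} to conclude $K \subseteq \m I_1$. Everything else is formal manipulation of canonical duals. Combining Steps 1 and 2 gives $\rmr(R/I) = \mu_R(I_1) = \rmr_R(I)$, as claimed.
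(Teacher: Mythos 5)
Your proof is correct and follows essentially the same route as the paper: dualize $0 \to I \to R \to R/I \to 0$ into $K$ to obtain $0 \to K \to I_1 \to \Ext_R^1(R/I,K) \to 0$, get $\mu_R(I_1)=\rmr_R(I)$ from \cite[Satz 6.10]{HK}, and reduce everything to the inclusion $K \subseteq \m I_1$. The only (harmless) difference is how that inclusion is verified: you use closedness in the form $I_1I=K$ (Proposition \ref{2.3b}(2)) together with $I \subseteq \m$, while the paper computes $K:\m I_1=(K:I_1):\m=I:\m\subseteq I:I=R=K:K$ and concludes by canonical duality — the two arguments are equivalent via Proposition \ref{2.3b}.
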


\begin{proof} We consider the exact sequence $0 \to K \to I_1 \to \Ext_R^1(R/I,K) \to 0$. It suffices to show $K \subseteq \m I_1$. We have $K :\m I_1 = (K:I_1) : \m$, while $(K:I_1):\m = I:\m \subseteq I:I = R = K : K$. Hence $\m I_1 \supseteq K$ and the assertion follows. \end{proof}

Combining Corollary \ref{1.4}, Proposition \ref{2.3b}, and Proposition \ref{2.5}, we have the following, which is the goal of this paper.

\begin{cor}\label{2.6}
Let $I$ be a fractional ideal of $R$. Then the following conditions are equivalent.
\begin{enumerate}
\item[$(1)$] $\rmr(R \ltimes I)= \rmr_R(I)$.
\item[$(2)$] $I$ is a closed ideal of $R$.
\end{enumerate}
When this is the case, $\rmr(R \ltimes I) = \rmr(R/I)$, if $I \subsetneq R$.
\end{cor}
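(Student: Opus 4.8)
The plan is to assemble the equivalence from results already in the paper. For the equivalence of (1) and (2), I would invoke Corollary \ref{1.4}: applied to $M = I$ (a MCM $R$-module, since $I$ is a fractional ideal containing a non-zerodivisor in a one-dimensional Cohen-Macaulay ring), it says that $\rmr(R \ltimes I) = \rmr_R(I)$ holds if and only if $I$ is a residually faithful $R$-module, i.e.\ $I/fI$ is a faithful $R/fR$-module for every (equivalently, some) parameter $f$. But that last condition is exactly Condition (4) (equivalently (5)) of Proposition \ref{2.3b}, which is the definition of $I$ being a closed ideal. So (1) $\Leftrightarrow$ (2) is immediate, essentially by stitching together Corollary \ref{1.4} and Proposition \ref{2.3b}; one should remark that Proposition \ref{2.3b} is stated for fractional ideals and that $K : I$ makes sense since a fractional canonical ideal $K$ has been fixed throughout Section 6.

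For the last sentence, suppose $I \subsetneq R$ is closed. Then $\rmr(R \ltimes I) = \rmr_R(I)$ by the equivalence just proved, and Proposition \ref{2.5} gives $\rmr_R(I) = \rmr(R/I)$ directly (it states $\rmr(R/I) = \mu_R(I_1) = \rmr_R(I)$ for $I_1 = K : I$). Chaining the two equalities yields $\rmr(R \ltimes I) = \rmr(R/I)$, which is the claim. One small point worth noting is that the hypothesis $I \subsetneq R$ is needed exactly because Proposition \ref{2.5} requires it — for $I = R$ one would instead have $\rmr(R \ltimes R) = \rmr(R)$, and indeed $R$ is trivially closed ($R : R = R$).

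I do not expect a genuine obstacle here: every ingredient is already established earlier in the excerpt, and the proof is purely a matter of citing Corollary \ref{1.4}, Proposition \ref{2.3b}, and Proposition \ref{2.5} in the right order. The only thing requiring a line of care is confirming that $I$ (a fractional ideal containing a non-zerodivisor) is a MCM $R$-module so that Corollary \ref{1.4} applies — this holds because $\depth_R I = \depth_R R = 1 = \dim R$ — and that the notion of "residually faithful" in Corollary \ref{1.4} matches Condition (5) of Proposition \ref{2.3b}, which is transparent from the definitions. So the write-up is short: state the MCM observation, invoke Corollary \ref{1.4} together with Proposition \ref{2.3b} for (1) $\Leftrightarrow$ (2), then apply Proposition \ref{2.5} for the final equality.
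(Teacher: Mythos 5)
Your proposal is correct and follows exactly the paper's own (one-line) argument, which simply combines Corollary \ref{1.4}, Proposition \ref{2.3b}, and Proposition \ref{2.5} in the same order you describe. Your added remarks (that a fractional ideal is MCM, that $\rmK_R$ exists in Section 6, and that $I=R$ is excluded only for the final equality) are accurate and fill in exactly the routine details the paper leaves implicit.
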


We close this paper with the following example.

\begin{ex}\label{2.7} Let $k$ be a field. Let $R = k[[t^3,t^4,t^5]]$ and set $I = (t^3, t^4)$. Then $I \cong \rmK_R$, and $I$ is a closed ideal of $R$ with $\rmr(R)=2$ and $\rmr(R\ltimes I) = \rmr_R(I) = 1$. We have $\rmr(R \ltimes J ) = 1 + \rmr_R(J) =3$ for $J = (t^3,t^5)$. The maximal ideal $\m$ of $R$ is an Ulrich $R$-module, and $\rmr(R \ltimes \m) = 2 + \rmr_R(\m) = 5$ by Theorem \ref{1.2}, since $\rmr_R(\m) = \rmr(R) + 1=3$ by Proposition \ref{typem}. See Corollary  \ref{2.3.3} for more details.
\end{ex}

%%%%%%%%%%%%%%%%%%%%%%%%%%%%%%%%%%%%%%%%%%%

%\begin{abstract}

%\end{abstract}

%%%%%%%%%%%%%%%%%%%%%%%%%%%%%%%%%%%%%%%%%%%%%%%%%%%%%%%%%%%%%%%%%%%%%%%%%%%%%%%%%%%%%%

%\addcontentsline{toc}{section}{references}

\end{document}